
\documentclass[12pt]{article}
\usepackage{graphicx}%
\usepackage{amsmath,amssymb,amsfonts}%
\usepackage{amsthm}%
\usepackage[title]{appendix}%
\usepackage{tikz}
\usepackage{hyperref}

\topmargin 0cm \evensidemargin 0.5cm \oddsidemargin 0.5cm
\textwidth15cm \textheight20.5cm

\setcounter{footnote}{1}

\newtheorem{prop}{Proposition}[section]{\bfseries}{\itshape}
\newtheorem{theo}[prop]{Theorem}{\bfseries}{\itshape}
\newtheorem{coro}[prop]{Corollary}{\bfseries}{\itshape}
\newtheorem{lemm}[prop]{Lemma}{\bfseries}{\itshape}

\newtheorem{remk}[prop]{Remark}{\bfseries}{\itshape}

\newcommand{\Po}{{\cal P}}

\newcommand{\tT}{\tilde{T}}
\newcommand{\cQ}{{\cal Q}}
\newcommand{\DG}{{\Delta}}

\newcommand{\Q}{{\end{document}}}

\newcommand{\xxi}{{\zeta}}

\newcommand{\N}{\mathbb{N}}
\newcommand{\Z}{\mathbb{Z}}

\newcommand{\bH}{\mathbb{H}}

\newcommand{\R}{\mathbb{R}}
\newcommand{\E}{\mathbb{E}}
\newcommand{\X}{\mathcal{X}}
\renewcommand{\Pr}{\mathbb{P}}
\renewcommand{\emptyset}{\varnothing}

\newcommand{\bx}{{\bf x}}

\renewcommand{\E}{\mathbb E \,}

\newcommand{\tar}{P}




\newcommand{\cH}{{\cal H}}

\newcommand{\Y}{{\cal Y}}

\newcommand{\cU}{{\cal U}}
\newcommand{\tcU}{\tilde{\cal U}}
\newcommand{\NN}{{\cal N}}

\newcommand{\cF}{{\cal F}}

\newcommand{\tA}{W}

\newcommand{\tQ}{C}

\newcommand{\eps}{\varepsilon}
\newcommand{\edm}{\end{displaymath}}
\newcommand{\be}{\begin{equation}}
\newcommand{\ee}{\end{equation}}
\newcommand{\bea}{\begin{eqnarray}}
\newcommand{\eea}{\end{eqnarray}}
\newcommand{\bean}{\begin{eqnarray*}}
\newcommand{\eean}{\end{eqnarray*}}

\renewcommand{\epsilon}{\varepsilon}
\newcommand{\bbS}{\mathbb{S}}
\newcommand{\Sn}{{\mathsf{S}}_n}
\newcommand{\Gu}{{\mathsf{Gu}}}
\newcommand{\dist}{\,{\rm dist}}
\newcommand{\lglg}{\log \log}
\newcommand{\blue}{black}

\begin{document}

\title{Random coverage from within with variable radii, and Johnson-Mehl cover times\footnotetext[0]{Research supported by Engineering and Physical Sciences Research Council grant EP/T028653/1.}}

\author{ Mathew D. Penrose\thanks{
		%
		%
			Department of
			Mathematical Sciences, University of Bath, Bath BA2 7AY, United
			Kingdom. 
			\texttt{m.d.penrose@bath.ac.uk}
		}
		\and
		Frankie Higgs\thanks{Department of
			Mathematical Sciences, University of Bath, Bath BA2 7AY, United
			Kingdom. \texttt{fh350@bath.ac.uk}
			}
	}



\maketitle

\begin{abstract}
Given a compact planar region $A$, let $\tau_A$ be  the (random) time it takes for the Johnson-Mehl tessellation of $A$ to be complete, i.e. the
	time for
	$A$ to be fully covered by a spatial birth-growth process in $A$ with seeds arriving as a unit-intensity Poisson point process in $A \times [0,\infty)$, where upon arrival each seed grows at unit rate in all directions. We show that if $\partial A$ is smooth or polygonal then $\Pr [ \pi \tau_{sA}^3 - 6 \log s - 4 \log \log s \leq x]$ tends to $\exp(- 
	(\frac{81}{4\pi})^{1/3} |A|e^{-x/3} 
	\linebreak -
	(\frac{9}{2\pi^2})^{1/3} |\partial A| e^{-x/6})$ in the large-$s$ limit; the second term in the exponent is due to boundary effects, the importance of which was not recognized in earlier work on this model.  We present similar results in higher dimensions (where boundary effects dominate). These results are derived using new results on the asymptotic probability of covering $A$ with a high-intensity spherical Poisson Boolean model \emph{restricted to $A$} with grains having iid small random radii, which generalize recent work of the first author that dealt only with grains of deterministic radius.
\end{abstract}


\section{Introduction}
\label{SecIntro}

The {\em Johnson-Mehl (J-M) tessellation} is a classic model of a
random tessellation in $\R^d$, where $d \in \N$.
Seeds are generated as a homogeneous Poisson point
process $\cH_\rho = \{(x_i,t_i)\}_{i \geq 1}$ 
of intensity $\rho$
in space-time
$\R^d \times [0,\infty)$.
If location $x_i$ is not already claimed by time $t_i$, the
seed $i$ becomes a cell at that instant, which immediately starts to grow 
from $x_i$
at unit rate in all directions, claiming previously unclaimed
territory as part of that cell. Whenever the growing cell hits another
cell, it stops growing in that direction.
Ultimately the whole of $\R^d$ is tessellated by cells.

This model dates back at least to work of Kolmogorov and
of Johnson and Mehl in the 1930s 
on modelling crystallization processes; other applications include 
growth of surface film on metals,  and more recent
applications in neurobiology are described in \cite{CSKM}.
For further discussion and references, see also \cite{Moller,OBSC}.

A natural variant is the {\em restricted}
Johnson-Mehl tessellation, which we define as follows. 
Given a 
specified compact region $A \subset \R^d$,
we allow only for seeds that arrive inside the
region $A$, that is, the seeds are generated by
a Poisson process in $A \times [0,\infty)$.
Cells grow by the same rules as described before, and the
restrictions of the cells to $A$
ultimately  tessellate $A$,
\textcolor{\blue}{as shown in Figure~\ref{f:tessellations}.}

We are interested here in the time at which $A$ becomes completely covered,
i.e. the first time at which every point of $A$ has been claimed by a 
cell, either for the J-M model
or for the restricted J-M model. In particular we are concerned with
the
limiting distribution
of the cover time, denoted $\tilde{T}_\rho$ for the J-M model and
$T_\rho$ for the restricted J-M model,
as $\rho \to \infty$.
Equivalently, one can keep $\rho$ fixed (say, $\rho=1$),
and consider the distribution of
the cover times for 
an expanding sequence of windows $(A_L)_{L>0}$ given by dilations
of a fixed  set $A$ by a factor of $L$, in
the large-$L$ limit. In
the later  case, the restricted Johnson-Mehl tessellation
uses only seeds that arrive inside $A_L $,
and we refer to the cover times in this limiting regime as
$\tau_L$ (for the restricted J-M model)
and $\tilde{\tau}_L$ (for the J-M model).

These cover times have previously been considered by S. N. Chiu
in
\cite{Chiu95}, for $A= [0,1]^d$. In fact, Chiu considers
a more general class of J-M models, where the rate at which
seeds arrive is allowed to be non-homogeneous in time,
and where moreover the seeds do not necessarily all grow
at the same rate.  We do not consider such generalizations here.

In \cite[page 893]{Chiu95}, 
Chiu rightly distinguishes between the J-M and restricted
J-M models.
%
However, he goes on to assert that
`all theorems in this paper are valid for both models' because
`almost surely
edge effects do not play a role in the limiting behaviour of
$T_L$' (Chiu's $T_L$ is  equivalent to our $\tau_L$).
In other words, Chiu seems to be asserting  that $\tau_L$
and $\tilde{\tau}_L$ have the same limiting distribution.

It is our contention that this assertion 
is incorrect. In dimensions $d \geq 2$, the limiting 
behaviour of $T_\rho$  is different from that of $\tT_\rho$,
and the limiting 
behaviour of $\tau_L$  is different from that of $\tilde{\tau}_L$;
in other words, edge effects \emph{do} play a role.
We justify this assertion with results identifying
the limiting
distribution of $T_\rho$ (suitably scaled and centred)
and showing  it is different from that of
$\tT_\rho$.
Likewise
we show that
the limiting
distribution of $\tau_L$ (suitably scaled and centred)
is different from that of
$\tilde{\tau}_L$; \textcolor{\blue}{our limiting result for $\tilde{\tau}_L$ is consistent with the result in \cite{Chiu95}}.
Our results for $T_\rho$ and $\tau_L$
apply when $A$ is polygonal
in $d=2$, or when it has a smooth (more precisely, $C^2$) boundary
for general $d \geq 2$.

When $A = [0,1]^d$,
we do not provide detailed
limiting distributions for $T_\rho$ and $\tau_L$ 
except in the case $d =2$.
To give a detailed limit distribution when $d \geq 3$ 
would require careful consideration
of all faces of all dimensions and is beyond the scope of
this paper.
However, the time to cover
all the $(d-1)$-dimensional faces will
be  a lower bound for the actual cover time 
so boundary effects can certainly not be neglected
in this case either.

We shall derive our results on the J-M model
from new results, of independent interest,  on the
{\em spherical Poisson Boolean model (SPBM)},  which is
defined to be a collection
of Euclidean balls (referred to as {\em grains}) of i.i.d. random radii,
centred on the points of a
homogeneous Poisson process in the whole of $\R^d$. 
For the the {\em restricted} SPBM, we take a 
Poisson process on $A$ rather  than on all of $\R^d$.

We  shall determine the probability that the restricted SPBM covers the 
whole of $A$, in the limit when the Poisson intensity becomes large and the
radii of balls become small in a linked manner.
In \cite{P23} we derived results of this nature for balls with
deterministic  radius; here we generalize them to allow for balls
of random radius, which is needed to derive our results on
the J-M cover time. Our results on coverage of $A$ by the restricted
SPBM complement the classic results of \cite{HallZW,Janson}
on the limiting probability of covering $A$ with an
{\em unrestricted} SPBM.

\textcolor{\blue}{
	The Johnson-Mehl cover time is the maximum
	of the random field $(\Xi_x, x \in A)$,
	where $\Xi_x$ denotes the  the time at which
at which $x$ is covered. For the restricted J-M model,
this maximum will be achieved at a vertex
$x$ of the restricted Johnson-Mehl tessellation
of $A$, as illustrated in Figure \ref{f:tessellations}.
	(In fact
there are other possibilities; e.g. if $A$ is polygonal with a sharp corner
then $\sup_{x \in A} \Xi_x$ could be achieved at a corner of $A$ but these
other possibilities have vanishing probability in our limit regimes,
at least when $d=2 $ or $d=3$.)
Thus it is the
maximum of a large finite random number of (perhaps weakly) dependent variables,
and one might perhaps expect one of the classical extreme value distributions
such as the Gumbel to arise in the limiting regimes that we consider. 
}

\textcolor{\blue}{
It turns out that the limit distribution for the restricted model
is indeed Gumbel in 3 or more dimensions, but
in two dimensions it is a {\em two-component extreme value distribution}
(see Remark \ref{rk:TCEV} below).
In short, this
is because for $d=2$, the maximum could be achieved either at a vertex
in the interior of $A$,
or at a vertex on the boundary of $A$,
and these different
possible contributions scale differently as $\rho \to \infty$;
in higher dimensions, the maximum is very likely to be at the boundary.}
%

\textcolor{\blue}{
For the restricted SPBM, again the probability of coverage 
can be framed 
as the extreme value of a random field.
	Label the Poisson points in $A$ as
$\{p_1,\ldots,p_N\}$ and let $Y_i$ be the random radius associated with
Poisson point $p_i$. 
Define the {\em coverage threshold} $R$ to be the
smallest $r$ such that the union of balls of radius $rY_i$ centred
on $p_i$ covers $A$.  Then for any scaling factor $r >0$,
the probability that that $A$ is fully covered by
balls of radius $rY_i$ equals the cdf $F_R(r):= \Pr[R \leq r]$.
The threshold is the maximum
of a random field, again denoted $(\Xi_x, x \in A)$, where now
$\Xi_x $ is the smallest $r$ such that $x \in \cup_i B(p_i,rY_i)$.
Moreover, once again for high intensities the maximum will typically
(at least in low dimensions) be achieved
at a vertex of a certain tessellation of $A$, namely the division
of $A$ into cells $C_i, 1 \leq i \leq N$ with 
$$
C_i := \big\{y \in A: \frac{\|y-p_i\|}{Y_i} \leq \frac{\|y- p_j \| }{Y_j} ~
\forall ~ j \in \{1,\ldots,N\} \big\},
$$
where $\|\cdot\|$ denotes the Euclidean norm on $\R^d$; here
		the cells might not be connected.
See Figure \ref{f:tessellations} for an illustration (which does indeed
include a disconnected cell) and Remark~\ref{r:videos}
for further discussion  (again the cells have a dynamical interpretation).
	In the special case where the $Y_i$ are a deterministic
	constant
	this is simply the
{\em Voronoi} tessellation of $A$ induced by the Poisson process in $A$.
In fact, in this case the coverage threshold is the largest circumscribed
radius of  the Poisson-Voronoi tessellation of $A$, as  discussed
in \cite{P23} for the restricted SPBM and in \cite{CC14} for the unrestricted
SPBM.
}

\begin{figure}[h]
 	\centering
 	\includegraphics[height=0.49\linewidth]{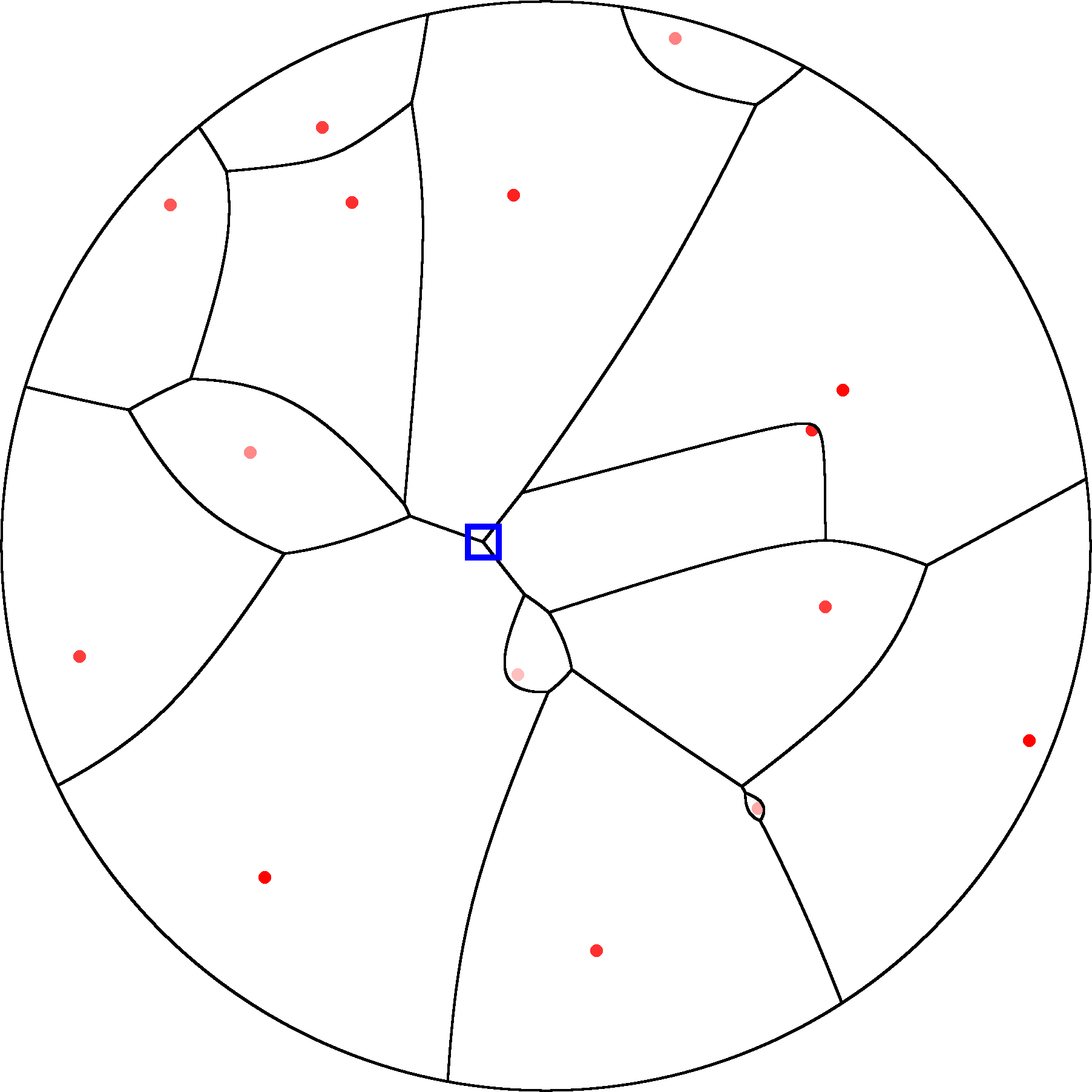}
 	\includegraphics[height=0.49\linewidth]{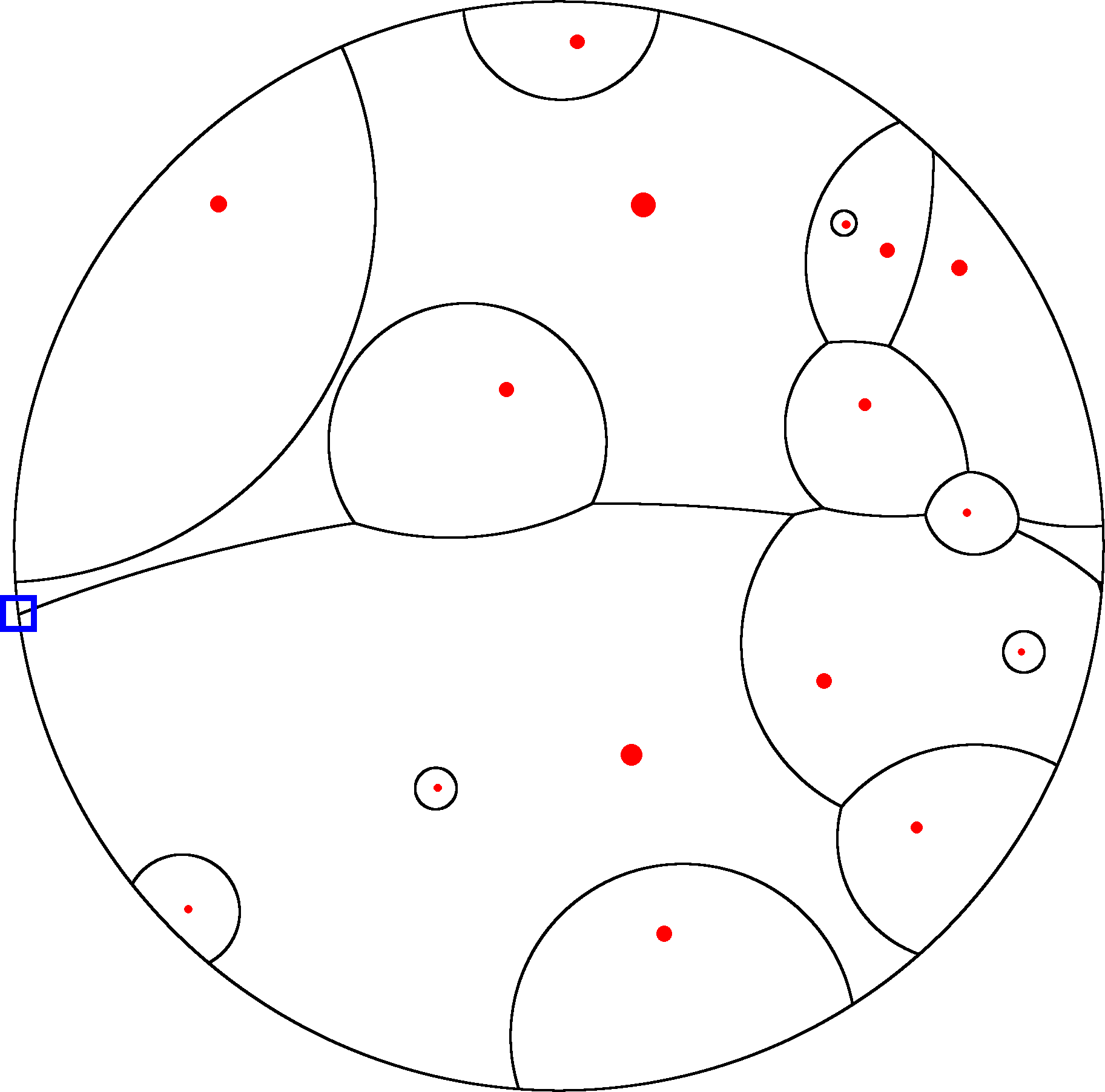}
 	\caption{\label{f:tessellations}
		Tessellations of a disc $A$ (diameter $0.9$)
		by the restricted Johnson--Mehl process
		with $\rho \approx 125$  
		(left)
 		and restricted spherical Poisson Boolean model
		with 
		$Y_i$ exponentially distributed (right).
 		In both cases the seeds are marked inside each cell with a red dot,
		and $\mathrm{argmax}_{x \in A} \Xi_x$
		(the ``last location covered'') is marked with a blue square.
		Later-arriving seeds 
		are marked with paler dots (left diagram);
		larger dots indicate larger $Y_i$ (right diagram).
	All vertices of both tessellations are of degree 3, including those
		on the boundary.
 		%
 	}
 \end{figure}

\textcolor{\blue}{
Thus in all cases we are interested in the distribution of
a random variable
(either the cover time or the coverage threshold) given by
the maximum of a certain geometrically-defined
random field  on $A$. 
Other somewhat related topics within this genre
of {\em geometric extreme value theory}  (as it might reasonably be termed)
include
multivariate scan statistics and clique number of random
geometric graphs (see e.g. \cite{Alm98}, \cite{P02}), and
largest nearest $k$-neighbour link ($k$-NNL) of a random sample of points
(see \cite{PY24} and references therein in a Euclidean space,
\cite{OT23} in hyperbolic space).
}


\section{Statement of results}
\label{secdefs}
\label{secweak}

Throughout this paper, we assume that
we are given $d \in \N$, and a compact,
Riemann measurable set
$A \subset \R^d$
(Riemann measurability of a bounded set
in $\R^d$ amounts to its boundary having zero Lebesgue measure).

For $x \in \R^d$ and $r>0$ set $B(x,r):= \{y \in \R^d:\|y-x\| \leq r\}$.
(We write $B_{(d)} (x,r)$ for this if we wish 
to emphasise the dimension.)
For $D \subset \R^d$, let $\overline{D}$ and $D^o$
denote the closure of $D$
and interior of $D$, respectively, and
set $\partial D:= \overline{D} \setminus D^o$, the
topological boundary of $D$. 
For $r>0$, let $D^{(r)}:=\{ x \in D: B(x,r) \subset D^o\}$, the
`$r$-interior' of $D$.
Let $|D|$ denote the Lebesgue  measure (volume) of $D$, and
$|\partial D|$ the perimeter of $D$, i.e. the
$(d-1)$-dimensional Hausdorff measure of
$\partial D$, when these are defined.
Write $\lglg t$ for $\log (\log t)$,
$t >1$. 

Define the set  $D^{[r]}$ to be
the interior of the union of all hypercubes of the
form \linebreak
$\prod_{i=1}^d[n_ir,(n_i+1)r] $, with $n_1,\ldots,n_d \in \Z$,
that are contained in $D^o$ (the set $D^{[r]}$ resembles
$D^{(r)}$ but is guaranteed
to be Riemann measurable).

We say that $D$ {\em has $C^2$ boundary} (for short: $\partial D \in
C^2$) if 
for each $x \in \partial D$
there exists a neighbourhood $U$ of $x$ and a real-valued function $f$ that
is  defined on an open set in $\R^{d-1}$
and twice continuously differentiable, such
that  $\partial D \cap U$, after a rotation, is the graph of the
function $f$. 
We say that $\partial D \in C^{1,1}$ (a weaker condition)
if for each $x$ the function
$f$ satisfies only that $f$ is continuously differentiable
with Lipschitz first order partial derivatives.

Given $d \in \N$, let $\omega_d := \pi^{d/2}/\Gamma(1 + d/2)$,
the volume of the unit ball in $\R^d$, and set
\bea
c_d := \frac{1}{d!} \left( \frac{\sqrt{\pi} \; \Gamma(1+ d/2) }{ \Gamma((d+1)/2) }
\right)^{d-1}.
\label{cdef}
\eea
Note that $c_1=c_2=1$, and  $c_3 =3 \pi^2/32$.
Moreover, using Stirling's formula one can show that
$c_d^{1/d}  \sim e (\pi/(2d))^{1/2}$ as $d \to \infty$.
The constant $  c_d$ is  denoted $ \psi_d$
in \cite[page 894]{Chiu95}, while $\omega_d$ is denoted
$\theta_d$ in \cite{P23}. Later it will be useful to have
$\omega_0$ defined: we set $\omega_0 :=1$.

\subsection{Results for the Johnson-Mehl  model}
\label{ss:JMresults}

Assume that on a common  probability space
$(\bbS,\cF,\Pr)$, we have a family of Poisson point
processes $\cH_\rho, \rho >0$. Here
$\cH_\rho$ is homogeneous of intensity $\rho$ on
$\R^d \times \R_+$, where
$\R_+ := [0,\infty)$.
We write $\cH_{\rho,A}$ for $\cH_\rho \cap (A \times \R_+)$.

Given 
$\rho \in (0,\infty)$,
define the coverage times $T_{\rho}$ and $\tilde{T}_{\rho}$ by
\bea
T_{\rho} :=
\inf \left\{ t >0: A \subset \cup_{(x,s) \in \cH_\rho \cap (A \times [0,t])}
B(x,t-s) \right\};
\label{e:T}
\\
\tT_{\rho} : = 
\inf \left\{ t >0: A \subset \cup_{(x,s) \in \cH_\rho \cap
	(\R^d \times [0,t])}
B(x,t-s)  \right\}.
\label{e:tT}
\eea
Also, for $L >0$ set $A_L := \{Lx:x \in A\}$ and let
\bea
\tau_{L} :=
\inf \left\{ t >0: A_L \subset \cup_{(x,s) \in \cH_1 \cap (A_L \times [0,t])}
B(x,t-s) \right\};
\label{e:tau}
\\
\tilde{\tau}_{L} : = 
\inf \left\{ t >0: A_L \subset \cup_{(x,s) \in \cH_1 \cap
	(\R^d \times [0,t])}
B(x,t-s)  \right\}.
\label{e:ttau}
\eea
%
Then
$T_\rho$, $\tT_\rho$ are the cover times of $A$ for the restricted
J-M model and for the J-M model, respectively, with 
intensity $\rho$.
Likewise
$\tau_L$, $\tilde{\tau}_L$, 
are the cover time of $A_L$ for the restricted
J-M model and for the J-M model, respectively, with 
intensity $1$.



We are concerned with the asymptotic  distribution
of the cover times $T_{\rho}$  
and $\tT_\rho$ as $\rho \to \infty$, and  
the asymptotic  distribution of $\tau_{L}$  
and $\tilde{\tau}_L$ as $L \to \infty$.
We start with a result in general $d$ for the unrestricted
cover time $\tT_{\rho}$ (and for $\tilde{\tau}_L)$.
This is simpler to deal with than $T_\rho$
because boundary effects are avoided. 
\begin{prop}
	\label{Hallthm}
	Suppose $A \subset \R^d $ is compact and Riemann measurable with $ |A| > 0$.
	Let $\beta \in \R$.  Then 
	\textcolor{\blue}{
	\begin{align}
		&	\lim_{\rho \to \infty} 
		\Pr \left[ \omega_d \rho \tT_{\rho}^{d+1}
		- d \log \rho   -
		d^2 \lglg \rho 
		\leq \beta \right] 
		\nonumber \\
		& = \exp(-c_d (d^d \omega_d)^{-1/(d+1)}  |A|e^{-\beta/(d+1)} )
		\label{1228a}
		\\
		& = 	\lim_{L \to \infty}
		\Pr \left[ \omega_d 
		\tilde{\tau}_L^{d+1}
		- d (d+1) \log L   -
		d^2 \lglg L
		- d^2\log (d+1) 
		\leq \beta \right]. 
		\label{e:ttaulim}
	\end{align}
	}
\end{prop}
In Appendix \ref{a:consist}, we shall verify
that this result is consistent with the
case $\gamma = v =1$ of \cite[Theorem 4]{Chiu95}.
The proof in
\cite{Chiu95} refers to an unpublished preprint; for 
completeness we shall present the
(fairly short) proof of Proposition \ref{Hallthm}
in Section \ref{s:pfJM}.

Our main new results in this subsection are concerned with
$T_\rho$ and $\tau_L$,  the cover times 
for the restricted  J-M model.
We first give a result for
the case where $A$ is a polygon in $d=2$.

\begin{theo}
	\label{thmwksq}
	Suppose that $d=2$ and $A$ is polygonal.
	Let $\beta \in \R$.
	Then  
		\textcolor{\blue}{
	\begin{align}
			& \lim_{\rho \to \infty} 
		\Pr[ \pi \rho 
		T_{\rho}^3 - 2 \log \rho  - 4   \lglg \rho 
		\leq \beta ]
		\nonumber \\
		& =
		\exp( - (4 \pi)^{-1/3} |A|  e^{- \beta/3}
		- (2 \pi^2)^{-1/3} |\partial A| e^{-\beta/6} )
		\label{0322b}
		\\
		&		= \lim_{L \to \infty}
		\Pr[ \pi \tau_L^3 - 6 \log L -4 \log \log L
		- \log 81
		\leq \beta].
		\label{e:taulim}
	\end{align}
	}
\end{theo}

Our result for general $d $ and $A$ with $C^2$ boundary
involves a constant $c'_{d}$ defined by
\begin{align}
	c'_d & := 
	\frac{c_{d-1}  \omega_{d-1}^{2d-3} 
	}{ \omega_{d-2}^{d-1} d^{d-1} 
	} 
	\Big( \frac{(d-1)^d}{2 \omega_d^d} \Big)^{(d-1)/(d+1)}
	.
	\label{e:cd'1}
\end{align}

\begin{theo}
	\label{thm3d}
	Suppose that $d \geq 2$ 
	and $\partial A \in C^2$ with $\overline{A^o} =A$.
	Let $\beta \in \R$.
	If $d=2$ then \eqref{0322b} 
	and \eqref{e:taulim} hold while
	if $d \geq 3$ then
	\textcolor{\blue}{
	\begin{align}
		& \lim_{\rho \to \infty} 
		\Pr[ \omega_d \rho 
		T_{\rho}^{d+1} - 2(d-1) \log \rho  - 2d(d-1)   \lglg \rho 
		\leq \beta ]
		\nonumber \\
		& =
		\exp( 
		- c'_d  |\partial A| e^{-\beta/(2d+2)} )
		\label{e:Tlim3d}
		\\
		& = \lim_{L \to \infty}
		\Pr[ \omega_d \tau_L^{d+1} - 2(d^2-1) \log L - 2d(d-1)
		\log \log L
		\nonumber \\
		&~~~~~~~~~~~~~~~~~~~~~~~~~~~~~~~~~~~~~ ~~~~~
		- 2 d(d-1) \log (d+1)
		\leq \beta].
		\label{e:taulim3d}
	\end{align}
	}
\end{theo}

\begin{figure}[h]
	\centering
	\includegraphics[width=1.0\linewidth, trim=11 14 11 12, clip]{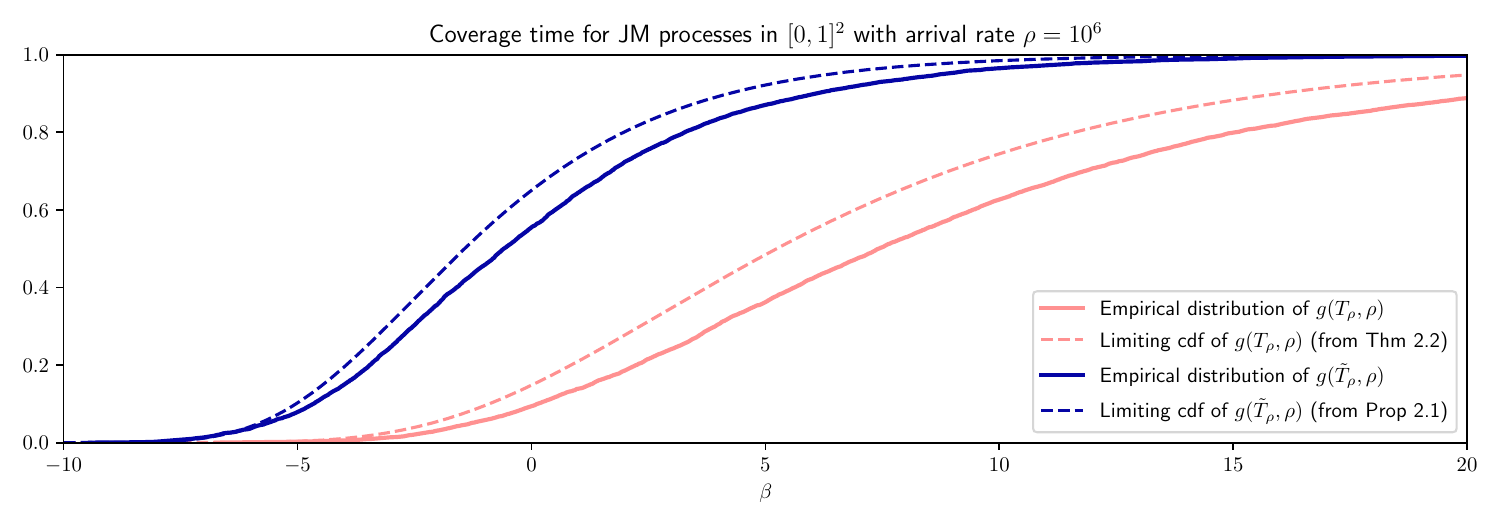} 
	\caption{\label{f:sim}
		Empirical distributions of the standardised coverage times $T_\rho$ and $\tilde T_{\rho}$
		as described in Proposition~\protect\ref{Hallthm}
		and Theorem~\protect\ref{thmwksq},
		obtained by sampling
		many independent realisations
		of the Johnson-Mehl process
		in $[0,1]^2$.
		The standardisation function
		$g(T,\rho) = \pi \rho T^3 - 2 \log \rho - 4 \log\log \rho$
		is the same for both $T_\rho$
		and $\tilde T_\rho$.}
\end{figure}


Let $\mathsf{Gu}$ denote a standard Gumbel random variable, i.e.
one with cdf  $\Pr[\Gu \leq \beta] = \exp(- e^{-\beta}),
\beta \in \R$. The limit in \eqref{1228a} is the cdf of the random
variable 
$$
(d+1) \Gu +
\log(c_d^{d+1}d^{-d}/ \omega_d) + (d+1) \log |A| 
,
$$
so Proposition \ref{Hallthm} says that $\omega_d \rho \tilde{T}_\rho^{d+1}$
suitably centred by a constant depending on $\rho$ and $d$ but not $A$,
and $\omega_d \tilde{\tau}_L^{d+1}$ suitably centred likewise,
both converge in distribution to the random variable $(d+1) (\Gu + \log |A|)$.

	Similarly, Theorem \ref{thm3d} 
says for $d \geq 3$ that $\omega_d \rho T_\rho^{d+1}$
suitably centred by a constant depending on $\rho$ and $d$ but not
 $A$,
converges in distribution to 
$(2d+2)( \Gu + \log |\partial A| )$, as does $\omega_d \tau_L^{d+1}$.

\textcolor{\blue}{
	Theorem \ref{thmwksq} 
	says for $d=2$ that
$\pi \rho T_\rho^3 - 2 \log \rho - 4 \log \log \rho$,
and $\pi \tau_L^3 - 6 \log L - 4 \log \log L - \log 81$,
both converge in distribution to the random variable
\begin{align}
\max ( 3 ( \Gu + \log |A|) - \log (4\pi)  , 
6 (\Gu' + \log |\partial A|) - \log
4 \pi^4),
	\label{e:TCEVlimit}
\end{align}
where $\Gu'$ is  another standard Gumbel variable, independent
of $\Gu$.}

\begin{remk}
\label{r:JMvideos}
{\rm
	A series of videos of very high intensity restricted Johnson-Mehl 
	processes inside $A = [0,1]^2$
	can be viewed \href{https://www.youtube.com/playlist?list=PLiaV5rk6Gk7rh6nvAts1w4WZrHE0TKeW2}{here}\footnote{\href{https://www.youtube.com/playlist?list=PLiaV5rk6Gk7rh6nvAts1w4WZrHE0TKeW2}{\texttt{https://www.youtube.com/playlist?list=PLiaV5rk6Gk7rh6nvAts1w4WZrHE0TKeW2}}}.
	The cells are coloured (with five different colours)
	so that no two adjacent cells in the final ``tessellation''
	have the same colour.
	To illustrate the above remark about which point is covered last when $d=2$,
	we have included one video in the playlist where the last point to be covered
	is at the boundary, and one where it is in the interior.

	An interactive tool for generating Johnson-Mehl tessellations is available at \href{https://frankiehiggs.pyscriptapps.com/johnson-mehl-plot/latest}{\texttt{https://frankiehiggs.pyscriptapps.com/johnson-mehl-plot/latest}}.\
	
	The results of more simulations, comparing the distribution of cover times for the
	restricted and unrestricted J-M models with finite $\rho$
	and their limiting distributions,
	are presented in Figure~\ref{f:sim}.
	%
	
	In \cite{Chiu95},  \textcolor{\blue}{it is}
	suggested that the same limit
	theorem should apply to $\tau_L$ as to $\tilde \tau_L$,
	as the boundary effects would not affect the limit.  This would
	imply that the same limit should apply to $T_\rho$ as to $\tilde{T}_\rho$. Our
	Theorem~\protect\ref{thmwksq} shows that in fact the
	boundary effects do affect the limiting distribution of $g(T_\rho,\rho)$,
	and the simulations shown in
	Figure~\ref{f:sim} back this up.
}
\end{remk}

\begin{remk} 
{\rm
	One could consider
	a slight variant of the restricted J-M model, in which a growing
	cell, whenever it hits the boundary of $A$,  stops 
	growing in that direction.
	Let  $T^*_\rho$ denotes the time at which the union of cells
	first covers $A$ for this variant of the model.
	
	When $A$ is convex, the value of $T^*_\rho$ is the same
	as that of $T_\rho$.
	If $A$ is not convex,
	we would expect our results to still be true for $T^*_\rho$
	as well as for $T_\rho$,
	but we do not prove this.
}
\end{remk} 
\begin{remk}  {\rm
	Suppose we take $d=2$ and $A= [0,1]^2$.
	Comparing \eqref{1228a} with \eqref{0322b}
	we see that
	the limiting distribution of $T_\rho^3$ 
	(appropriately scaled and centred) is different from that of
	$\tilde{T}_\rho^3$ (scaled and centred the same way).
	Similarly, comparing
	\eqref{e:ttaulim} with \eqref{e:taulim}
	we see that
	the limiting distribution of $\tau_L^3$ 
	(appropriately scaled and centred) is different from that of
	$\tilde{\tau}_L^3$ (scaled and centred the same way).
	
	When $d \geq 3$ and $\partial A \in C^2$,
	we see from \eqref{1228a} and \eqref{e:Tlim3d} that
	even the centring  constants required to get a nondegenerate
	limit distribution for $ \omega_d \rho T_\rho^{d+1}$
	are different (larger) compared to the corresponding constants 
	for  $ \omega_d \rho \tilde{T}_\rho^{d+1}$. Again
	similar remarks apply for $\tau_L$ and $\tilde{\tau}_L$.
}
\end{remk} 
\begin{remk} 
{\rm
	As mentioned earlier, in \cite{Chiu95} Chiu considers cover times 
	for more general J-M models where the growth rate of
	cells, or the arrival rates of seeds, are not necessarily constants.
	If growth rates are non-constant, then it is
	rather problematic
	to model the covered region at a given time as simply a SPBM; one has to
	modify our initial description of the  cell growth model to allow a
	faster-growing cell to pass through a slower-growing cell
	and come out the other side. If one is willing to do this
	(which seems to be the approach in \cite{Chiu95}),
	the methods of this paper should be applicable for dealing
	with boundary effects for these more
	general J-M models too.

	\textcolor{\blue}{
	These more general J-M type models with non-constant growth
	rates (in particular parabolic growth rates) and non-constant
	arrival rates, and with the balls allowed to interpenetrate,
	have been studied in the literature in relation
	to the understanding of asymptotics of convex hulls of
	random points \cite{Schreiber},
	maximal points of a multidimensional sample \cite{Schreiber}
	and constructing generalized Delaunay tessellations \cite{Gusakova}.
	It could be interesting to try to map our methods
	here onto questions of interest in these related models;
	this is a possible future direction of research.
}
	}
\end{remk} 
\begin{remk} 
	\label{rk:TCEV}
{\rm
	The distribution of the maximum of two independent Gumbel
	variables with different scale parameters
	\textcolor{\blue}{
		(e.g. the variable shown at \eqref{e:TCEVlimit})}
	is known
	as a {\em two-component extreme value} (TCEV) distribution
	in the hydrology literature \cite{Rossi}.
	
	Theorems \ref{thmwksq} and \ref{thm3d} tell us  that when
	$d=2$ and $A$ is polygonal or $\partial A \in C^2$,
	the random variables
	$\omega_d \rho T_\rho^{d+1}$
	and of $\omega_d \tau_L^{d+1}$, suitably centred, 
	are asymptotically
	TCEV distributed, while if $d \geq 3$ and $\partial A \in C^2$,
	they are asymptotically  Gumbel distributed. 

\textcolor{\blue}{The TCEV arises elsewhere in geometric extreme value theory. For example,
	suppose $L_{n,k,d}$ denotes
	the largest $k$-nearest neighbour link in
a sample of $n$ uniform random points in $[0,1]^d$, $n > k$.
That is, denoting these points $p_1,\ldots,p_n$, we have
	$L_{n,k,d} = \max_{1 \leq i \leq n}( k$-$\min_{j \leq d, j \neq n} \|p_i-p_j\|)$, where $k$-$\min(\cdot)$ means the $k$th smallest of a set of at least
	$k$ numbers.
It is shown in \cite{PY24}
	that $n \pi L_{n,2,2}^2 $, suitably centred, is asymptotically
 TCEV distributed.
 Again this is due to boundary effects,
and again, these have sometimes been missed in the literature;
	for example it is claimed in \cite[p214]{OT23}, incorrectly,
	that $n \pi L_{n,2,2}^2$ suitably
	centred is asymptotically Gumbel. The limiting distribution of
	$L_{n,k,d}$ (suitably transformed)
	for general fixed $(k,d)$ is given in \cite[Theorem 8.4]{P03},
	which the authors of \cite{OT23} were apparently unaware of.}
}
\end{remk}

\subsection{Results for the spherical Poisson Boolean model}

As discussed in the introduction,
we shall derive our results for the J-M model 
from
results on coverage by the restricted SPBM, which are of independent interest
and which we now state. 
Given a nonnegative random variable $Y$
and given $n > 0$, suppose we have a collection
of balls of independent random radii with the distribution of $Y$,
centred on the points of 
a homogeneous Poisson point process in $A$ of intensity $n$.

Given also $k \in \N$, let 
$Z_{A,k}(n,Y)$ be the set of points $x \in \R^d$ such that
$x$ is covered by at least $k$ of the balls,
so in particular
$Z_{A,1}(n,Y)$ is the union of the balls
(we shall provide a more detailed description of $Z_{A,k}(n,Y)$ 
in Section \ref{ss:half-space}.) 
Throughout this paper, $n $ is not necessarily an integer.

\begin{theo}[\textcolor{\blue}{Limiting probability of $k$-coverage of a 
	polygonal domain by a restricted SPBM}]
	\label{th:general2d}
	Suppose $Y$ is a nonnegative random variable with
	$0 < \E[Y^{2+\eps}] < \infty$ for some $\eps >0$.
	Suppose that $d=2$ and $A$ is polygonal.
	Let $k \in \N, \beta \in \R$, and suppose $(r_n)_{n >0}$
	are nonnegative numbers 
	satisfying
	$n \pi r_n^2 \E[Y^2] - \log n -  (2k-1) \log \log n \to \beta$
	as $n \to \infty $.
	Then  
	as $n \to \infty$, 
	\begin{align}
		\Pr[ A \subset Z_{A,k}(n,r_nY)]
		\to
		\exp \Big( - \Big( \frac{(\E[Y])^2}{\E[Y^2]} \Big)
		{\bf 1}_{\{k=1\}}
		|A|  e^{- \beta}
		- \Big( \frac{c_{2,k} \E[Y]
			|\partial A|
		}{(\E[Y^2])^{1/2}} \Big)
		e^{-\beta/2} \Big),
		\label{0128a}
	\end{align}
	where we set $c_{2,k} := \pi^{-1/2}(1/2)^{k-1}/(k-1)!$.
\end{theo}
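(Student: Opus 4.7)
The strategy is to adapt the approach of \cite{P23}, which treated the deterministic-radius case, to i.i.d.\ random radii. Set $\lambda_n := n\pi r_n^2 \E[Y^2]$, so the hypothesis reads $\lambda_n = \log n + (2k-1)\lglg n + \beta + o(1)$. For $x \in A$ the number of grains covering $x$ is Poisson with mean $\nu_n(x):=n\,\E[|B(x,r_nY)\cap A|]$: one has $\nu_n(x) = \lambda_n$ whenever $\dist(x,\partial A) \geq r_n y_0$ for a large cutoff $y_0$, while for $x$ close to $\partial A$ there is a ``missing cap'' correction. Following \cite{P23} I would encode the event $\{A\not\subset Z_{A,k}(n,r_nY)\}$ as $\{N_n\geq1\}$, where $N_n$ counts minimal geometric descriptors of a locally isolated $<k$-covered region (each such descriptor being a point $x\in A$ surrounded by a canonical configuration of roughly $k+2$ grains whose boundary arcs enclose $x$). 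The theorem then reduces to showing (i) $\E N_n\to\mu$, the exponent in \eqref{0128a}, and (ii) Poisson convergence of $N_n$.

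For the first moment I would partition descriptors according to whether they lie at distance $\geq r_n\lglg n$ from $\partial A$ (``interior'') or within that collar (``boundary''). The interior count is computed via the Mecke--Slivnyak formula, producing an integral over surrounding Poisson points weighted by i.i.d.\ radii; after rescaling lengths by $r_n$ and radii by $Y$, this factorises into a spatial integral yielding $|A|$ and an analytic factor in the radii. For $k=1$ the analytic factor evaluates to $(\E Y)^2/\E[Y^2]$, delivering the interior term of \eqref{0128a}; for $k\geq 2$ the interior descriptor density is smaller by a factor of order $(\log n)^{-1}$ and vanishes in the limit, which explains the indicator ${\bf 1}_{\{k=1\}}$. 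The boundary count localises near each straight edge of $\partial A$ to a half-plane problem, since $A$ is polygonal; the Mecke computation then factors into a length integral $|\partial A|$ along edges, a one-dimensional integral in the scaled distance-to-edge variable, and a moment integral in $Y$, combining to give $c_{2,k}\E[Y]|\partial A|/(\E[Y^2])^{1/2}\cdot e^{-\beta/2}$. Contributions from corners affect only a boundary length of order $r_n$ after scaling and are absorbed into error terms.

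Poisson convergence of $N_n$ would follow from factorial moment calculations, or equivalently Chen--Stein: the interaction range of descriptors is $O(r_n\log n)$, so well-separated descriptors are asymptotically independent and $\E[N_n(N_n-1)\cdots(N_n-j+1)]\to\mu^j$ for every $j$, matching the moments of a Poisson($\mu$) variable. The main obstacle beyond \cite{P23} is controlling atypically large radii, since $Y$ need not be bounded and a single grain of radius much larger than $r_n$ could distort the descriptor counts. I would handle this by truncation: replace $Y$ by $Y\wedge M_n$ for a sequence $M_n\to\infty$ chosen so that $n\Pr[Y>M_n]\to 0$ and the truncated radii satisfy $\E[(Y\wedge M_n)^2]=\E[Y^2](1+o(1))$, both feasible under $\E[Y^{2+\eps}]<\infty$ (for instance, $M_n=n^{1/(2+\eps/2)}$ suffices). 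A union bound shows that with probability tending to one no grain has $Y_i>M_n$, so the truncated and original models share the same limiting coverage probability, and in the truncated model the radii are uniformly bounded, allowing the descriptor density calculations and pair-correlation estimates of the first two steps to be rigorously justified.
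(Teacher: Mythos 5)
The proposal takes a genuinely different route from the paper. You propose to re-derive the limiting distribution from scratch by Poisson-approximating a count $N_n$ of "descriptors" of locally isolated sub-$k$-covered points, in the spirit of Janson's original argument. The paper instead proceeds by reduction and decomposition: it truncates the radius marks at $n^{\xxi}$ for a tiny constant $\xxi$ (Lemma~\ref{l:2PPs}), decomposes $A$ into corner regions, edge neighbourhoods, and the interior $A^{(3n^\xxi r_n)}$; shows corners are covered w.h.p.\ (Lemma~\ref{lemQ}); handles the interior by applying the known Janson/Hall limit for the unrestricted SPBM (Lemma~\ref{lemHall}); and handles each edge via Lemma~\ref{lemhalfd}, which observes that the traces of the balls on the boundary hyperplane of a half-space form a $(d-1)$-dimensional SPBM and applies Lemma~\ref{lemHall} once more, one dimension lower. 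The constants $\E[Y]/(\E[Y^2])^{1/2}$ and $c_{2,k}$ then fall out of the change of parameters to the slice radius $W = \tilde Y\sqrt{1-U^2}$ with $\tilde Y$ size-biased. A Mecke-formula witness-count bound does appear in Lemma~\ref{lemhalfd}, but only to show that covering the boundary hyperplane is asymptotically equivalent to covering a thin slab, not to compute the limit. Finally, the interior and edge events are combined by independence of the truncated processes. The paper's route uses the one-dimensional-reduction trick to avoid re-doing the hard Janson-type asymptotics in the restricted geometry; yours, if completed, would be self-contained but considerably more technical.

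Two concrete gaps in the proposal as written. First, the boundary descriptor is not as you sketch it: near an edge of $\partial A$, the lowest uncovered point of a vacant region is typically the intersection of \emph{two} ball boundaries lying on a line parallel to $\partial A$ (or the intersection of fewer arcs with $\partial A$ itself), not a $(k+2)$-grain configuration as in the interior. Distinguishing these cases is precisely what produces $e^{-\beta/2}$ rather than $e^{-\beta}$ and the edge constant $c_{2,k}$; without spelling it out, the claimed Mecke factorisation ``length $\times$ distance $\times$ radius'' is not justified and the constant cannot be recovered. Second, your truncation $M_n = n^{1/(2+\eps/2)}$ is far coarser than the paper's $n^\xxi$: it does kill all oversize grains w.h.p., but the resulting ball diameter $M_n r_n$ is then only a slowly vanishing power of $n$ for small $\eps$, so the interaction range for the factorial-moment/Chen--Stein step is $O(M_n r_n)$, not $O(r_n\log n)$ as you assert. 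You would need to verify that the second-moment error terms still vanish at this scale, or instead work without truncation and control the Mecke integrals directly using the $(2+\eps)$-moment, in the style of the paper's Lemma~\ref{l:inth}.
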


For general $d, k \in \N$ with $ d \geq 2$ 
we define $c_{d,k}$ (as in \cite{P23}) by
\begin{align}
	c_{d,k} & :=  c_{d-1} 
	\omega_d^{2-d-1/d} \omega_{d-1}^{2d-3}
	\omega_{d-2}^{1-d}
	(1- 1/d)^{d+k -3 + 1/d} 2^{-1+1/d} /(k-1)!.
	\label{cd1def}
\end{align}
It can be checked that
\begin{align}
	c_{d,1} 
	=  ((d-1)!)^{-1}2^{1-d}
	(d-1)^{d-2+1/d}
	\pi^{(d/2)-1}
	\Gamma
	\left( \frac{d+1}{2} \right)^{1-d} \Gamma \left( \frac{d}{2}
	\right)^{d - 1 + 1/d },
	\label{e:cdkdef2}
\end{align}
and $c_{d,k} = c_{d,1} (1-1/d)^{k-1}/(k-1)!$.
Also $c_{2,1} = \pi^{-1/2}$
and $c_{3,1}= 2^{-4} \pi^{5/3}$.
In  the right hand side of
\eqref{e:cdkdef2} the first factor of $((d-1)!)^{-1}$
was given, incorrectly, as $(d!)^{-1}$ in \cite{P23}. 

\begin{theo}[\textcolor{\blue}{Limiting probability of $k$-coverage
	of a smoothly bounded region in $d\geq 2$ by a restricted SPBM}]
	\label{th:generaldhi}
	Suppose that $d \geq 2$ and 
	$\partial A \in C^2$ with $\overline{A^o} =A$.
	Suppose $Y$ is a nonnegative random variable with
	$0 <\E[ Y^{d+ \eps} ] < \infty $ for some $\eps >0$.
	Let $k \in \N, \beta \in \R$, and suppose $(r_n)_{n >0}$
	are nonnegative numbers satisfying 
	\begin{align}
		n \omega_d r_n^d \E[Y^d]  - (2-2/d) \log n - 
		2(d+k-3+1/d) \log \log n \to \beta
		\label{e:rn}
	\end{align}
	as $n \to \infty $.
	If $d=2$ then
	\eqref{0128a} holds, while if $d \geq 3$ 
	then  
	\begin{align}
		\lim_{n \to \infty} 
		\Pr[ A \subset Z_{A,k}(n,r_nY)]
		=
		\exp\Big( - c_{d,k} 
		\Big(\frac{(\E[Y^{d-1}])^{d-1}}{(\E[Y^{d}])^{d-2+1/d}} \Big)
		|\partial A| e^{-\beta/2} \Big).
		\label{0128b}
	\end{align}
\end{theo}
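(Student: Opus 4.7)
My plan is to extend the framework of \cite{P23}, which handled deterministic radii, to random radii via a marked Poisson process. I realize the collection of (centre, radius) pairs as a Poisson point process on $A \times [0,\infty)$ with intensity $n\, dx \otimes d\mu_Y(y)$, where $\mu_Y$ denotes the law of $Y$. A point $x \in A$ lies in $Z_{A,k}(n, r_n Y)$ iff at least $k$ atoms $(y,r)$ of this process satisfy $\|x - y\| \leq r_n r$. Let $N_n$ denote the number of maximal connected components of $A \setminus Z_{A,k}(n, r_n Y)$; then $\{A \subset Z_{A,k}(n, r_n Y)\} = \{N_n = 0\}$, so it suffices to show $N_n \tod \mathrm{Poisson}(\mu_\infty)$, where $\mu_\infty$ equals the right-hand side exponent in \eqref{0128a} (for $d=2$) or in \eqref{0128b} (for $d \geq 3$).

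First, I would localize uncovered components to a boundary tube. Truncate $Y$ at a slowly growing level $M_n \to \infty$ with $M_n^d = o(\log n)$; the hypothesis $\E[Y^{d+\eps}] < \infty$ yields $\E[Y^d \mathbf{1}\{Y > M_n\}] \to 0$ by H\"older, so this truncation is harmless. For an interior point $x$ with $\dist(x,\partial A) \geq 2 r_n M_n$, the Poisson mean of covering balls is $n \omega_d r_n^d \E[Y^d]$, which by \eqref{e:rn} exceeds $(2 - 2/d)\log n + o(\log n)$; hence the probability that $x$ is $k$-uncovered is $O(n^{-2+2/d+o(1)})$, and a union bound over an $r_n$-net in $A^o$ is negligible. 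Consequently, with probability tending to $1$, all uncovered components lie in the boundary tube $T_n := \{x \in A : \dist(x, \partial A) \leq C r_n M_n\}$ and have diameter $O(r_n M_n)$, so components separated by much more than $r_n M_n$ depend on disjoint regions of the Poisson process and are asymptotically independent.

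Next I would compute $\E[N_n] \to \mu_\infty$ as an integral along $\partial A$. Flattening $\partial A$ locally, the Poisson mean of grains covering a point $x$ at normal distance $s$ from $\partial A$ equals
\begin{equation*}
\mu_n(s) = n \int_0^\infty |A \cap B_{(d)}(x, r_n y)|\, d\mu_Y(y) = n r_n^d \int_0^\infty y^d f(s/(r_n y))\, d\mu_Y(y),
\end{equation*}
where $f(t) := |B_{(d)}(0,1) \cap \{z_1 \leq t\}|$ satisfies $f(0) = \omega_d/2$ and $f'(0) = \omega_{d-1}$, giving
\begin{equation*}
\mu_n(s) = \tfrac{1}{2} n \omega_d r_n^d \E[Y^d] + s \cdot n \omega_{d-1} r_n^{d-1} \E[Y^{d-1}] + O\big((s/r_n)^2\big)
\end{equation*}
for $s$ in the relevant range. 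Writing $\mu_0 := \mu_n(0)$ and $\alpha_n := n \omega_{d-1} r_n^{d-1} \E[Y^{d-1}]$, the Poisson tail bound $\Pr[\mathrm{Po}(\mu_n(s)) < k] \sim \mu_n(s)^{k-1} e^{-\mu_n(s)}/(k-1)!$ combined with the substitution $u = \alpha_n s$ reduces the boundary contribution to $|\partial A| \cdot \mu_0^{k-1} e^{-\mu_0}/((k-1)!\, \alpha_n)$. Substituting the scaling \eqref{e:rn} for $\mu_0$ and expressing $r_n^{d-1}$ in terms of $\mu_0$, the logarithmic factors cancel and the moment ratio $(\E[Y^{d-1}])^{d-1}/(\E[Y^d])^{d-2+1/d}$ emerges, together with the constant $c_{d,k}$ of \eqref{cd1def}. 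For $d=2$ the interior contribution (the $|A|$ term, present only when $k=1$) is of comparable order and must be added; for $d \geq 3$ it is asymptotically negligible, which explains why \eqref{0128b} has no volume term.

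The hardest step, absent from \cite{P23}, is controlling the joint geometry of uncovered clusters under random radii. Higher factorial moments $\E[N_n^{(m)}]$ reduce to $m$-point boundary integrals whose integrands involve mixed products of $Y$-moments from overlapping grains; showing these factor asymptotically as $\mu_\infty^m$ requires careful truncation arguments based on $\E[Y^{d+\eps}] < \infty$ to control atypical grain sizes, together with cluster-separation estimates analogous to those in \cite{P23}. Once all factorial moments converge, $N_n \tod \mathrm{Poisson}(\mu_\infty)$ follows by the method of moments, yielding $\Pr[A \subset Z_{A,k}(n, r_n Y)] = \Pr[N_n = 0] \to e^{-\mu_\infty}$ as required.
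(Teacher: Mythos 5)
Your proposal takes a genuinely different route from the paper. You propose to prove Poisson convergence for $N_n$, the number of uncovered components, via the method of moments, and read off the coverage probability as $\Pr[N_n = 0]$. The paper instead never counts clusters: it works directly with coverage probabilities, reducing the boundary contribution to a $(d-1)$-dimensional unrestricted SPBM via a chain of reductions (polytopal approximation of $\partial A$, rearrangement of near-boundary blocks into a flat slab via the ``induced coverage process'', Lemma \ref{lemhalfd} for half-spaces, which in turn invokes Lemma \ref{lemHall} in dimension $d-1$), and then obtains the final answer as a product of limits for asymptotically independent boundary pieces and the interior.

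There is, however, a real gap in your first-moment computation. The integral you set up,
\begin{equation*}
\int_{\partial A}\int_0^\infty \Pr[\mathrm{Po}(\mu_n(s)) < k]\, ds\, d\mathcal{H}^{d-1}(y) \approx |\partial A|\cdot \frac{\mu_0^{k-1} e^{-\mu_0}}{(k-1)!\,\alpha_n},
\end{equation*}
is the expected \emph{uncovered Lebesgue volume}, not the expected number of components. If you actually substitute $\mu_0 \sim (1-1/d)\log n$, $e^{-\mu_0} \sim n^{-(1-1/d)}(\log n)^{-(d+k-3+1/d)}e^{-\beta/2}$ and $\alpha_n = \Theta(n^{1/d}(\log n)^{(d-1)/d})$ into that expression, the powers of $n$ and $\log n$ do not cancel; you get something of order $n^{-1}(\log n)^{1-d}\to 0$, not a constant. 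You assert that ``the logarithmic factors cancel and the moment ratio $(\E[Y^{d-1}])^{d-1}/(\E[Y^d])^{d-2+1/d}$ emerges, together with the constant $c_{d,k}$,'' but the sketch only ever produces first powers of $\E[Y^{d-1}]$ (in $\alpha_n$) and $\E[Y^d]$ (in $\mu_0$); there is no mechanism in it that could produce the exponent $d-1$ on $\E[Y^{d-1}]$ or the factor $c_{d-1}$ hidden inside $c_{d,k}$. To make your approach work you would need the Hall-type ``canonical vertex'' count: identify each uncovered component with the lowest intersection point of $d$ ball boundaries and compute its intensity via the Mecke formula over $d$-tuples of marked centres. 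That $d$-fold integral is precisely where $d$ independent copies of $Y$ enter, producing $(\E[Y^{d-1}])^{d-1}$, and where the combinatorial constant $c_{d-1}$ of \eqref{cdef} arises as the normalised volume of a $(d-1)$-simplex. (The paper avoids redoing this by reducing to the $(d-1)$-dimensional unrestricted result, Lemma \ref{lemHall}, which already encapsulates it; in the paper the vertex/corner count and the function $h$ of \eqref{hrdef2a} appear only to bound an \emph{error} term in Lemma \ref{lemhalfd}, not to compute the main constant.)

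Separately, the higher factorial moment step $\E[N_n^{(m)}]\to\mu_\infty^m$ is announced but not addressed; with unbounded $Y$, the $m$-point boundary integrals involve correlated coverage from large grains, and the declustering needs more than the $L^{d+\eps}$-moment truncation you mention. The paper's substitute for both of these steps --- a hard decoupling of radii into a bounded part (the $\cU'_{n,D}$ of \eqref{e:U'}) plus a negligible heavy tail, followed by genuine spatial independence of well-separated boundary pieces --- circumvents factorial moments entirely.
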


\begin{remk}
	{\rm Taking $Y \equiv 1$ in these results gives
		Theorem 3.2 and Theorem 3.1 of \cite{P23}.
			\textcolor{\blue}{That paper}
		also includes
		parallel results (in the case $Y \equiv 1$) 
		for a situation where the number of grains in
		the spherical Boolean model is a large deterministic
		constant rather than a Poisson variable. We
		expect that  a result along these lines
		could be given 
		for general $Y$ using the methods of this paper.
	}
\end{remk}
\begin{remk}
	{\rm The condition $\overline{A^o}=A$ should have
		been included in the statement of \cite[Theorem 3.1]{P23}.
		For example, if $d=2$ and
		$A$ is the union of a disk (filled in) and a circle
		(not filled in, and disjoint from the disk) then
		that theorem does not apply.}
\end{remk}
\begin{remk} {\rm In Theorem \ref{th:generaldhi}, we conjecture
		that
		the 
		condition $\partial A \in C^2$ can be relaxed 
		to $\partial A \in
		C^{1,1}$. 
	}
	
\end{remk}
\begin{remk}
	\label{r:heavy-tail}
	{\rm
		If $\E[Y^d]= \infty$ then
		$\Pr[A \subset Z_{\R^d,k}(n,rY)] =1$ for any $k \in \N$ and
		any $r>0$;
		see e.g.\ \cite[Theorem 16.4]{LP} for the case $k=1$.
		However, one could still look for a sequence
		$(r_n)$ such that $\Pr[A \subset Z_{A,1}(n,r_nY)]$ converges
		to a nontrivial limit. It seems plausible in this case
		that $A$ is most likely to be covered (if at all)
		by a small number of balls.
		This looks like an interesting problem that
		lies beyond the scope of this paper.
	}
\end{remk}
\begin{remk}
	\label{r:videos}
	{\rm
		A series of videos illustrating the SPBM
		and Remark~\ref{r:heavy-tail}
		can be viewed at
		\href{https://www.youtube.com/playlist?list=PLiaV5rk6Gk7qEXpLOU7FSvN4b8dy1_GJn}{this link}\footnote{\href{https://www.youtube.com/playlist?list=PLiaV5rk6Gk7qEXpLOU7FSvN4b8dy1_GJn}{\texttt{https://www.youtube.com/playlist?list=PLiaV5rk6Gk7qEXpLOU7FSvN4b8dy1\_GJn}}}.
		There are three videos, with $n = 1000$, $n=10000$ and $n=20000$ respectively.
		Each video has a Poisson point process of intensity $n$ inside $[0,1]^2$
		with points $\{ p_1, \dots, p_{N_n} \}$.
		The frame at time $t$ is an image of a tessellation corresponding
		to a restricted SPBM,
		$Z_{A,1}(n,r_t Y_t)$,
		where $A = [0,1]^2$,
		$Y_t$ is Pareto distributed with shape parameter $\alpha(t)$
		and $r_t$ is chosen so that $A \subset Z_{A,1}(n,r_t Y_t)$.
		There are $N_n$ cells in the frame:
		the pixel with centre $x \in [0,1]^2$ is assigned to a cell based on
		$\mathrm{argmin}_{i}\inf\{r \geq 0 : x \in B(p_i,r Y_{t,i})\}$,
		where $(Y_{t,i})_{1 \leq i \leq N_n}$ are independent copies of $Y_t$.
		In other words, for $t$ fixed the tessellation is generated
		by a growth process 
		for which all seeds are born at time 0, 
		growth rates are random and distributed like $Y_t$, and 
		the growing cells are interpenetrable.
		
		Note that cells are not necessarily connected.
		When displaying the video
		we colour the cells
		so that no two adjacent cells have the same colour,
		although two distinct cells which never touch are allowed to have the same colour.
		
		Over the video, $\alpha(t)$ changes from $3.00$ to $0.25$,
		i.e. $Y_t$ becomes more heavy-tailed,
		with the $Y_t$s coupled using the definition $Y_t := U^{-1/\alpha(t)}$
		where $U$ is uniformly distributed on $[0,1]$.
		
		At the beginning of each video,
		$\E[Y_t^{3-\varepsilon}] < \infty$ for all $\varepsilon > 0$
		so Theorem~\ref{th:general2d} applies.
		As the video progresses,
		we gradually reduces the number of finite moments,
		finishing when $\E[Y_t^{0.25}] = \infty$.
		
		The videos after the point where $\E[Y_t^2] = \infty$
		illustrate Remark~\ref{r:heavy-tail}.
		In the period when $\E[Y_t^2] = \infty$
		but $\E[Y_t^{7/4}] < \infty$,
		there is not yet a single cell containing more than half the area of $[0,1]^2$,
		indicating that there may be a non-trivial period where several balls are required
		to cover $[0,1]^2$.
	}
\end{remk}

\subsection{Strategy of proof}
\label{ss:strategy}
Next, we briefly describe the strategy for the proof, in Sections
\ref{s:pfJM} and \ref{secpfwk}, of
the weak convergence results that were stated above.
\textcolor{\blue}{In both sections,}
we shall use a known result (Lemma \ref{lemHall}) 
giving the limiting probability of covering
a  bounded region
of $\R^d$ by an unrestricted SPBM,
in the limit of 
high intensity $n$ and small balls of random radius,
i.e. distributed as $r_nY$ for a specified nonnegative random
variable $Y$ and for constants $r_n$ that become small as $n $ becomes large.

\textcolor{\blue}{We claim that the question of coverage
for  the (restricted) SPBM with uniformly distributed
radii maps onto the same question for the restricted Johnson-Mehl model.
Indeed,
given $t > 0$, consider a SPBM in $A$ with Poisson intensity $n$
and radii uniformly distributed over $[0,t]$. Label
the centres $x_1,\ldots,x_N$ with associated radii
$t_1,\ldots,t_N$. Then for each $i$, $t_i$ is uniformly
distributed over $[0,t]$ and therefore
so is $t-t_i$. Hence by the Marking theorem (see e.g. \cite{LP}),
the point process $\eta:= \{(x_i,t - t_i), 1 \leq i \leq N\}$
is a Poisson process in $A \times [0,t]$ with intensity given by
the product of Lebesgue measure on $A$ and the uniform probability distribution
on $[0,t]$, i.e. $n/t$ times Lebesgue measure on $A \times [0,t]$.
Hence the covered region for the original SPBM in $A$ is the
same as the covered region for the restricted J-M model (run up to time
$t$) obtained by using the Poisson process $\eta$. We can argue similarly
in the unrestricted case too.}

Using this claim and
applying Lemma \ref{lemHall} for the SPBM with $Y$
uniformly distributed on $[0,1]$ will
yield a proof of Proposition \ref{Hallthm}.
Similarly, applying Theorems \ref{th:general2d}
and \ref{th:generaldhi} with uniform $Y$ 
will yield  proofs of 
Theorems \ref{thmwksq} and
\ref{thm3d} respectively.


\textcolor{\blue}{Our strategy  for
proving Theorems 
\ref{th:general2d}
and \ref{th:generaldhi}
goes as follows.}
We shall consider, for large $n$
and suitable choice of $r_n$, 
the SPBM with radii distributed as $r_nY$, restricted to
a $d$-dimensional half-space $\bH$.  
In Lemma \ref{lemhalfd} we determine the limiting probability that
a given  bounded 
set within the hyperplane $\partial \bH$
is covered, 
by applying Lemma 
\ref{lemHall} in $d-1$ dimensions.
Moreover we will show
that the probability  that a region in the half-space  within distance 
$n^\xxi r_n$
of  that set  is covered with the same limiting probability, where
$\xxi$ is a small positive constant.

We shall then prove  
Theorem \ref{th:general2d} 
by applying Lemma \ref{lemhalfd}
to determine the limiting probability that  the
region near the edges of a polygonal set $A$ is covered,
and Lemma   \ref{lemHall} directly to determine the limiting
probability that the interior region is covered,
along with a separate argument 
to show the regions
near the corners  of  $A$ are  covered with high probability.

To prove Theorem
\ref{th:generaldhi} we approximate to $A$ by a polytopal set $A_n$
with faces of width  $O(n^{9 \xxi} r_n)$ and follow similar steps to
those just mentioned for polygonal $A$. 

In the case where $Y$ is unbounded, to obtain the required
independence,
for example between coverage events for different faces of our
polyhedron or polygon, we consider only those balls of radius
at most $n^\zeta r_n $,  with a separate argument (Lemma \ref{l:2PPs})
to show
this does not affect the limiting coverage probabilities.

\section{Proof of results for the J-M model}
\label{s:pfJM}

\textcolor{\blue} We now use Theorems 
\ref{th:general2d} and \ref{th:generaldhi} to
prove Theorems \ref{thmwksq}
and \ref{thm3d}.
The proof of Theorems
\ref{th:general2d} and \ref{th:generaldhi} is much longer
and we defer this to the next section.

We shall repeatedly use   
the following result from \cite{P23}, which is based
on results in \cite{Janson} or
\cite{HallZW}. Recall that $c_d$ and $Z_A(n,Y)$ were defined at
(\ref{cdef}) and just before Theorem \ref{th:general2d}
respectively.
\begin{lemm}[Limiting probability of coverage by an unrestricted SPBM]
\label{lemHall}
Let $d \in \N$.
Let $\alpha := \omega_d \E[Y^d] $.
Let $\beta \in \R$. Suppose 
$ \delta(\lambda) \in (0,\infty)$  
is defined for all $\lambda >0$, and satisfies
\bea
\lim_{\lambda \to \infty} \left( \alpha \delta(\lambda)^d \lambda -
\log \lambda  - (d+k-2) \lglg \lambda \right)  
= \beta.
\label{0315c}
\eea
Let $B \subset \R^d$  be compact and Riemann measurable, and for 
each $\lambda >0$ let
$B_\lambda \subset B$
be Riemann measurable with the properties that  
$B_\lambda \subset B_{\lambda'}$ 
whenever $\lambda \leq \lambda'$, and  that
$\cup_{\lambda >0} B_\lambda \supset B^o$.
Then 
\bea
\lim_{\lambda \to \infty} 
\Pr[B_\lambda \subset Z_{\R^d,k}(\lambda,\delta(\lambda)Y)
=
\exp \left(-  \left( 
\frac{c_d  (\E [Y^{d-1}] )^d}{ 
(k-1)!(\E[Y^d ])^{d-1} } 
\right)|B| e^{-\beta}  \right). 
\label{0315b}
\eea
\end{lemm}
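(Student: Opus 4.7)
The plan is to deduce the lemma from the classical coverage theorems of Janson \cite{Janson} and Hall \cite{HallZW} (in their $k$-coverage and random-radius extensions stated in \cite{P23}), together with a soft monotonicity argument to pass from a fixed target set to the sequence $(B_\lambda)$. Accordingly, I would split the argument into two steps: first establish the statement with $B_\lambda$ replaced by $B$ itself, then use the nesting and exhaustion hypotheses on $(B_\lambda)$ to transfer the conclusion.

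For the first step, the standard route is a Poisson approximation for the number $N_\lambda$ of ``$k$-vacant clumps'' of $B$. One identifies the clumps with small neighbourhoods of points that lie on the boundary of a common intersection of exactly $k$ balls and are not covered by any further ball, and then shows that $N_\lambda$ converges in distribution to a Poisson random variable with mean $\mu = \frac{c_d (\E[Y^{d-1}])^d}{(k-1)!(\E[Y^d])^{d-1}}\,|B|\,e^{-\beta}$, via the method of factorial moments (or Chen--Stein). The mean computation is a Palm--integral-geometric calculation: factors of $\E[Y^{d-1}]$ arise from integrating over the $d-1$ sphere-intersection constraints defining a clump, while $\E[Y^d]$ appears inside the thinning probability $\exp(-\omega_d \lambda \delta(\lambda)^d \E[Y^d])$; the scaling \eqref{0315c} is precisely what makes $\mu$ a finite nonzero constant, and the prefactor $c_d$ from \eqref{cdef} emerges from the simplex-volume normalisations. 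The conclusion $\Pr[N_\lambda = 0] \to e^{-\mu}$ then yields the lemma for fixed $B$.

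To upgrade from $B$ to the sequence $B_\lambda$, first note that $B_\lambda \subset B$ gives $\Pr[B \subset Z_{\R^d,k}(\lambda,\delta(\lambda)Y)] \leq \Pr[B_\lambda \subset Z_{\R^d,k}(\lambda,\delta(\lambda)Y)]$, so the $\liminf$ direction is immediate from the previous step. For the matching $\limsup$, fix $\eps > 0$; Riemann measurability of $B$ gives $|\partial B| = 0$, so I may choose a compact Riemann measurable $K \subset B^o$ with $|K| \geq |B| - \eps$. Using compactness of $K$, the monotonicity $B_\lambda \subset B_{\lambda'}$ for $\lambda \leq \lambda'$, and the hypothesis $\cup_{\lambda > 0} B_\lambda \supset B^o$, a standard open-cover argument gives $\lambda_0$ with $K \subset B_\lambda$ for all $\lambda \geq \lambda_0$, and hence $\Pr[B_\lambda \subset Z_{\R^d,k}] \leq \Pr[K \subset Z_{\R^d,k}]$; applying Step~1 to $K$ and sending $\eps \downarrow 0$ closes the argument.

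The main obstacle is the first step: when $Y$ has only a finite $(d+\eps)$-moment, the factorial moment (or Stein coupling) estimates typically require a truncation at a radius of order $\lambda^\zeta \delta(\lambda)$ for some small $\zeta > 0$, supplemented by a tail bound showing that balls of radius larger than this threshold contribute no extra $k$-vacant clumps asymptotically. This is precisely where the moment assumption $\E[Y^\gamma] < \infty$ for some $\gamma > d$ is used in an essential way, and it is the one place where care is needed beyond the deterministic-radius case $Y \equiv 1$ already treated in \cite{P23}. Given that attribution, I expect most of the necessary machinery can be imported wholesale, with the random-radius extension handled by conditioning on the radii and applying dominated convergence to the expectation of the (now random) Poisson intensities.
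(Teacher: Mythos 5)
Your proposal is correct and takes essentially the same route as the paper: the paper's entire proof is a citation to \cite[Lemma~7.2]{P23}, which establishes the statement (including the $B_\lambda$ exhaustion step you carry out by hand) for bounded $Y$, together with the assertion that the argument carries over under the $(d+\eps)$-th moment condition. Your sketch simply makes explicit the Janson--Hall clump-counting mechanism underlying \cite{P23} and the radius-truncation step needed to relax boundedness, both of which the paper leaves implicit.
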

\begin{proof}
See \cite[Lemma 7.2]{P23}.
There, it was assumed that $Y$ is bounded, but the proof
carries over if this condition is relaxed to the $(d+\eps)$-th moment
condition used here.
\end{proof}

\begin{lemm}[Scaling lemma]
\label{l:scaling}
Suppose 
$L >0$ and
$\rho = L^{d+1}$.
Then
$L \tT_\rho$ has the same distribution as $\tilde{\tau}_L$.
Moreover
$L T_\rho$ has the same distribution as $\tau_L$.
\end{lemm}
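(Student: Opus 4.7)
The plan is to use the standard scaling argument for Poisson point processes on space-time, exploiting the fact that distances and times both scale linearly under the growth dynamics.

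First, I would introduce the dilation map $\Phi_L : \R^d \times [0,\infty) \to \R^d \times [0,\infty)$ defined by $\Phi_L(x,s) := (Lx, Ls)$. Under $\Phi_L$, Lebesgue measure on $\R^d \times [0,\infty)$ transforms with Jacobian $L^{d+1}$, so by the Mapping Theorem (see e.g. \cite[Theorem 5.1]{LP}) the pushforward of a homogeneous Poisson process of intensity $\rho$ under $\Phi_L$ is a homogeneous Poisson process of intensity $\rho / L^{d+1}$. Choosing $\rho = L^{d+1}$ gives intensity $1$, so $\Phi_L(\cH_\rho) \eqd \cH_1$ as point processes on $\R^d \times [0,\infty)$.

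Next I would verify that the coverage events transform covariantly. The key observation is that for any $y, x \in \R^d$ and any $s \leq t$,
\begin{equation*}
    y \in B(x, t-s) \iff Ly \in B(Lx, Lt - Ls),
\end{equation*}
and moreover $(x,s) \in \cH_\rho \cap (\R^d \times [0,t])$ iff $\Phi_L(x,s) \in \Phi_L(\cH_\rho) \cap (\R^d \times [0,Lt])$. Therefore, for each $t > 0$,
\begin{equation*}
    A \subset \bigcup_{(x,s) \in \cH_\rho \cap (\R^d \times [0,t])} B(x,t-s)
\end{equation*}
if and only if
\begin{equation*}
    A_L \subset \bigcup_{(x',s') \in \Phi_L(\cH_\rho) \cap (\R^d \times [0,Lt])} B(x', Lt - s').
\end{equation*}
Taking the infimum over such $t$ on the left gives $\tT_\rho$; taking the infimum over $Lt$ on the right gives (in distribution) $\tilde{\tau}_L$, since $\Phi_L(\cH_\rho) \eqd \cH_1$. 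Hence $L \tT_\rho \eqd \tilde{\tau}_L$.

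For the restricted case, I would note that $\Phi_L$ maps $A \times [0,\infty)$ bijectively onto $A_L \times [0,\infty)$, so by the Restriction/Mapping Theorems the pushforward of $\cH_\rho \cap (A \times [0,\infty))$ is distributed as $\cH_1 \cap (A_L \times [0,\infty))$. Repeating the calculation above with $A$ and $\R^d$ replaced throughout by the appropriate restricted domains then yields $L T_\rho \eqd \tau_L$. There is no real obstacle here beyond bookkeeping; the content of the lemma is just that the growth-rate $1$ and spatial/temporal scales couple exactly so that a single rescaling $(x,s,t) \mapsto (Lx, Ls, Lt)$ converts the $\rho$-intensity problem on $A$ into the unit-intensity problem on $A_L$.
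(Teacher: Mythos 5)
Your proposal is correct and is essentially the same argument as in the paper: both scale space and time by a factor of $L$, observe that the coverage events transform covariantly under this rescaling, and invoke the Mapping theorem (plus Restriction for the restricted case) to identify the rescaled intensity-$\rho$ Poisson process with an intensity-$1$ Poisson process when $\rho = L^{d+1}$. Your $\Phi_L$ is exactly the paper's $L\cH_\rho$ written as a map.
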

\begin{proof}
For any  $ t > 0$, setting $t' = Lt$, we have
\begin{align*}
A \subset 
\cup_{(x,s) \in \cH_\rho \cap
	(\R^d \times [0,t])} B(x,t-s) 
\Leftrightarrow 
A_L \subset 
\cup_{(y,u) \in L \cH_\rho \cap
	(\R^d \times [0,t'])} B(y,t'-u) 
\end{align*}
so that by \eqref{e:tT}, 
\begin{align*}
L \tT_\rho = \inf \{ t': 
A_L \subset 
\cup_{(y,u) \in L \cH_\rho \cap
	(\R^d \times [0,t'])} B(y,t'-u) 
\}.
\end{align*}
By our choice of $\rho$ and the Mapping theorem (see e.g.\ \cite{LP}),
$L \cH_\rho$ is a homogeneous Poisson process of
intensity $1$ on $\R^d \times \R_+$,
and therefore by the definition \eqref{e:ttau},
$L \tT_\rho$ has the same distribution as $\tilde{\tau}_L$.

The proof  that
$L T_\rho$ has the same distribution as $\tau_L$ is similar.
\end{proof}

\begin{proof}[Proof of Proposition \ref{Hallthm}]
Suppose for some $\beta \in \R$ that $(t_\rho)_{\rho >0}$ satisfies
\bea
\lim_{\rho \to \infty} 
\left( \omega_d  \rho  t_\rho^{d+1}
- d \log  \rho  -   d^2 \lglg \rho   
\right) 
= \beta, 
\label{0511a}
\eea
i.e.  $t_\rho = \left(\left(d\log \rho + d^2 \lglg \rho + \beta
+ o(1)\right)/(
\omega_d \rho)\right)^{1/(d+1)}$.

Write $\cH_{\rho} \cap (\R^d \times [0,t_\rho])
= \{(X_i,S_i)\}_{i \in \N}$. 
Then the point process $\Po_{\rho t_\rho} :=
\{X_i\}_{i \in \N}$ is a homogeneous
Poisson process of intensity $\rho t_\rho $ in $\R^d$.

The balls $B(X_i,t_\rho -S_i) $, $i \in \N,$ form a SPBM 
in $\R^d$,  and in the notation of 
Lemma \ref{lemHall}, here we have
$\delta (\lambda) = t_\rho$, 
and $Y $ uniformly distributed over $[0,1]$,
so that $\alpha =\omega_d/(d+1)$,
and $\lambda = \rho t_\rho $,
so that
$$
\log \lambda = \left( \frac{d}{d+1} \right)
\log \rho + \frac{\log \log \rho + \log (d/\omega_d)}{d+1} 
+ o(1),
$$
and
$\lglg \lambda = \lglg \rho + \log (d/(d+1)) + o(1)$.
Hence,
\begin{align*}
\alpha \delta(\lambda)^d \lambda - \log \lambda - (d-1) \lglg \lambda
& = 
\frac{\omega_d
	\rho t_\rho^{d+1}
}{d+1}
- \frac{d \log \rho}{d+1}  - \frac{
	d^2 \lglg \rho }{d+1}  
- \frac{\log(d/\omega_d)}{d+1}
\nonumber \\
& 
~~~~~~~ - (d-1) \log \left( \frac{d}{d+1} \right)+ o(1),
\end{align*}
and by 
(\ref{0511a}), as $\rho \to \infty$  this tends to
$$
\frac{\beta}{d+1} - \log \left( \left( \frac{d^{d^2}}{\omega_d
(d+1)^{d^2 -1}} \right)^{1/(d+1)} \right).
$$
Hence  we have the condition 
(\ref{0315c})
from Lemma \ref{lemHall} (with $\beta$ on the right hand side
replaced by
the last displayed expression).
Also $\E[Y^{d-1}] = 1/d$ 
and $\E[Y^d] = 1/(d+1)$.

We have $\tT_\rho \leq t_\rho$
if and only if $A \subset \cup_{i\in \N}B(X_i,t_\rho -S_i)$.
Therefore by
taking $k=1$ and $B_\lambda = A$ for all $\lambda$ in
Lemma \ref{lemHall},
we obtain that
$$
\Pr[\tT_{\rho} \leq t_\rho] \to  \exp \left(-
c_d 
\left( \frac{(d+1)^{d-1}}{d^d} \right)
\left( \frac{d^{d^2}}{\omega_d (d+1)^{d^2-1} } \right)^{1/(d+1)}
|A|e^{-\beta/(d+1)}\right).
$$
This yields (\ref{1228a}).

For \eqref{e:ttaulim}, 
take $\rho = L^{d+1}$, so that $\log \log L
= \log \log \rho - \log(d+1)$.
By Lemma \ref{l:scaling},
$L \tT_\rho$ has the same distribution as $\tilde{\tau}_L$.
Therefore
	\textcolor{\blue}{
\begin{align*}
	& \Pr \left[ \omega_d \tilde{\tau}_L^{d+1}
- d (d+1) \log L   -
d^2 \lglg L 
		- d^2 \log (d+1)
\leq \beta \right] 
\\
	& = \Pr\left[
\omega_d \rho \tilde{T}_\rho^{d+1}
- d \log \rho - d^2 \log \log \rho 
\leq \beta \right],
\end{align*}
		}
and by \eqref{1228a}, 
\textcolor{\blue}{
this gives us
 \eqref{e:ttaulim}.}
\end{proof}

We now derive Theorems \ref{thmwksq} and
\ref{thm3d} from Theorems  \ref{th:general2d} and
\ref{th:generaldhi}, respectively.

\begin{proof}[Proof of Theorem \ref{thmwksq}]
We take $Y$ to be uniformly distributed
on $[0,1]$. If the restricted J-M model with intensity $\rho$
runs for time $t$, the 
intensity of the resulting restricted SPBM is $n=t\rho$,
and the distribution of radii is that of $tY$.
Let $\beta \in \R$, and define 
\begin{align*}
t_\rho : = \Big( \frac{(2 \log \rho + 4 \log \log \rho + \beta)
	\vee 0
}{\pi \rho } \Big)^{1/3}, ~~~~~~~~~~~\rho > 1,
\end{align*}
so that
$
\pi \rho t_\rho^3 - 2 \log \rho - 4 \log \log \rho = \beta,
$
for all large enough $\rho$. 
Then as $\rho \to \infty$,
\begin{align*}
\log  (\rho t_\rho )
& =  (2/3) \log \rho + (1/3) \log \log \rho + 
\log ( (2/\pi)^{1/3}) + o(1),
\end{align*}
and
$
\log \log (\rho t_\rho) = \log \log \rho + \log (2/3) + o(1).
$
Therefore since $\E[Y^2]=1/3$, we have
\begin{align*}
\pi \rho t_\rho^3 \E[Y^2]- \log ( \rho t_\rho) -
\log \log (\rho t_\rho)
= & \beta/3 + (2/3) \log \rho + (4/3) \log \log \rho \\
& - (2/3) \log \rho - (1/3) \log \log \rho -  \log
((2/\pi)^{1/3})
\\
& - \log \log \rho - \log (2/3) + o(1)
\\
= & (\beta/3) - \log ((16/(27 \pi))^{1/3}) + o(1),
\end{align*}
so applying Theorem \ref{th:general2d} with $n=\rho t_\rho$,
$r_n = t_\rho$, $k=1$ and $Y$ uniform on $[0,1]$ yields
\begin{align*}
\Pr[ \pi \rho 
T_{\rho}^3 - 2 \log \rho  - 4   \lglg \rho 
\leq \beta ]
= \Pr[T_\rho \leq t_\rho] 
= \Pr[A \subset Z_{A,1}(n,r_nY)]
\\
\to
\exp \Big( - \Big(\frac{3}{4}\Big) 
|A|\Big(\frac{16}{27 \pi}\Big)^{1/3}   e^{- \beta/3}
- 
\Big(\frac{3}{4\pi}\Big)^{1/2} 
|\partial A|
\Big(\frac{16}{27 \pi} \Big)^{1/6}
e^{-\beta/6} \Big),
\end{align*}
and hence
\eqref{0322b}.

For \eqref{e:taulim}, let $\rho = L^{3}$, so that
$\log \log \rho = \log \log L + \log (3)$.
Then by Lemma \ref{l:scaling},
\textcolor{\blue}{
\begin{align*}
\Pr[ \pi \tau_L^3 - 6 \log L -4 \log \log L - \log 81
\leq \beta]
= \Pr [ \pi \rho T_\rho^3 - 2 \log \rho - 4 \log \log \rho
\leq \beta],
\end{align*}
and by \eqref{0322b}, this yields
\eqref{e:taulim}.
}
\end{proof}

\begin{proof}[Proof of Theorem \ref{thm3d}]
Let $\beta \in \R$ and for $\rho > 1$ define
\begin{align}
t_{\rho} :=
\Big(
\frac{(2(d-1) \log \rho  + 2d(d-1)   \lglg \rho 
	+ \beta) \vee 0 }{\omega_d \rho}  \Big)^{1/(d+1)}, 
\label{e:for3dpf}
\end{align}
so that
$
\omega_d \rho 
t_{\rho}^{d+1} - 2(d-1) \log \rho  - 2d(d-1)   \lglg \rho 
= \beta ,
$
for all large $\rho$.
Then as $\rho \to \infty$,
\begin{align*}
\log (\rho t_\rho)
= & \frac{d\log \rho +  \log \log \rho}{d+1}
+ \log \Big( \Big(\frac{2 (d-1)}{\omega_d} \Big)^{1/(d+1)}\Big)
+ o(1),
\end{align*}
and
$
\log \log (\rho t_\rho) = \log \log \rho + \log (d/(d+1)) 
+ o(1).
$
Therefore taking $Y$ to be uniformly distributed
on $[0,1]$,
so that $\E[Y^d] = 1/(d+1)$,
using \eqref{e:for3dpf} we obtain that
\begin{align*}
\rho t_\rho  \omega_d t_\rho^d \E[Y^d]
- (2-2/d) \log (\rho t_\rho) - 2(d-2+1/d) \log \log
(\rho t_\rho)
\\
= \frac{2(d-1) \log \rho + 2d(d-1) \log \log \rho}{d+1} 
+ \frac{\beta}{d+1} 
\\
- \frac{(2-2/d)(d\log \rho +  \log \log \rho)}{d+1}
- (2-2/d)\log\Big(\Big(\frac{2(d-1)}{\omega_d}
\Big)
^{1/(d+1)}
\Big)
\\
- 2(d-2+1/d) \log \log \rho - 2(d-2+1/d) \log(d/(d+1)) +o(1)
\\
= \frac{2d(d-1) - (2-2/d) -2 (d-2 +1/d) (d+1) }{d+1}
\log \log \rho +
\frac{\beta}{d+1} - c_d''' +o(1)
\\
= \frac{\beta}{d+1} - c_d''' +o(1),
\end{align*}
where we set
\begin{align*}
c'''_d := \log \Big( \Big( \frac{2(d-1)}{\omega_d} 
\Big)^{(2-2/d)/(d+1)} 
\Big( \frac{d}{d+1}
\Big)^{2(d-2+1/d)}
\Big).
\end{align*}
Therefore applying Theorem \ref{th:generaldhi}
with $k=1$, $n=\rho t_\rho$ and $r_n = t_\rho$,
we obtain that if $d \geq 3$ then
\begin{align*}
\Pr[ \omega_d \rho 
T_{\rho}^{d+1} - 2 (d-1) \log \rho  - 2d (d-1)   \lglg \rho 
\leq \beta ]
= \Pr[T_\rho \leq t_\rho] 
= \Pr[A \subset Z_{A,1}(n,r_nY)]
\\
\to \exp \Big(
-\Big(
\frac{
	c_{d,1} 
	(d+1)^{d-2+1/d}
	|\partial A|
}{d^{d-1}} 
\Big)
\Big( \frac{2(d-1)}{\omega_d} \Big)^{(1-1/d)/(d+1)}
\\
\times
\Big( \frac{d}{d+1} \Big)^{d-2+1/d} e^{-\beta/(2d+2)} 
\Big)
\\
= \exp \Big( - \frac{
	c_{d-1} \omega_{d-1}^{2d-3}
	(d-1)^{d-2+(1/d) + ((1-1/d)/(d+1))}
	|\partial A| e^{-\beta/(2d+2)}}{
	\omega_{d-2}^{d-1}\omega_d^{d-2+(1/d)
		+ ((d-1)/(d(d+1)))} 2^{1-(1/d) - (1-1/d)/(d+1)} d^{d-1}} \Big)
\\
= \exp \Big( - \frac{
	c_{d-1} \omega_{d-1}^{2d-3}
	(d-1)^{d(d-1)/(d+1)}
	|\partial A| e^{-\beta/(2d+2)}}{
	\omega_{d-2}^{d-1} \omega_d^{d(d-1)/(d+1)} 2^{(d-1)/(d+1)}
	d^{d-1}
} \Big),
\end{align*}
and hence \eqref{e:Tlim3d}. 

If $d=2$ then our $t_\rho$ defined at \eqref{e:for3dpf}
is the same as in the proof of Theorem \ref{thmwksq}.
Applying the case $d=2$ of Theorem \ref{th:generaldhi}, in the same way
as we applied Theorem \ref{th:general2d}
in the proof of Theorem \ref{thmwksq}, gives us the same outcome
as in Theorem \ref{thmwksq}, namely \eqref{0322b}.

When $d=2$ we obtain \eqref{e:taulim} from \eqref{0322b}
as in the proof of Theorem \ref{thmwksq}.
When $d \geq 3$, to get
\eqref{e:taulim3d} we set $\rho = L^{d+1}$. By Lemma \ref{l:scaling},
\textcolor{\blue}{
\begin{align*}
	& \Pr[\omega_d \tau_L^{d+1} -2 (d^2-1) \log L -2 d (d-1) [\log \log L
	+ \log (d+1)]
	\leq \beta] &
\\
	& = \Pr[ \omega_d \rho T_\rho^{d+1} -2 (d-1) \log \rho
-2 d(d-1) \log \log \rho 
	\leq \beta ],
\end{align*}
and then by  \eqref{e:Tlim3d},
we have \eqref{e:taulim3d}.
}
%
\end{proof}

\section{Proof of Theorems \ref{th:general2d} and \ref{th:generaldhi}}
\label{secpfwk}

Throughout this section, we assume
$d, k \in \N$ are fixed with $d \geq 2$, and
$A \subset \R^d$ is compact and nonempty 
with $A = \overline{A^o}$, and that we are given a nonnegative random variable
$Y$ satisfying $0 < \E[Y^\gamma] < \infty$ for some $\gamma >d$.

\subsection{Preliminaries}
\label{subsecprelims}
We now give  some further notation used throughout.
Let $o$ denote
the origin in $\R^d$.
Set $\bH := \R^{d-1}\times [0,\infty)$ and 
$\partial \bH := \R^{d-1}\times \{0\}$.


Given two  sets $\X,\Y \subset \R^d$, we
set  $ \X \triangle \Y := (\X \setminus \Y) 
\cup (\Y \setminus \X)$, the symmetric difference between $\X$ and $\Y$.
Also, we write $\X \oplus \Y$ for the set $\{x+y: x \in \X, y \in \Y\}$,
and $\#(\X)$ for the number of elements of $\X$ (possibly $+\infty$).
Given also $x \in \R^d$ we write $x+\Y$ for $\{x\} \oplus \Y$.

Given a Borel measure $\mu$ on $\R^d$, and Borel $D \subset \R^d$,
let $\mu|_D$ denote the restriction of the measure $\mu$ to $D$
(a Borel measure on $D$). 

Given $x,y \in \R^d$, we denote by $[x,y]$ the line segment from
$x$ to $y$, that  is, the convex hull of the set $\{x,y\}$.
We write $a \wedge b$ (respectively $a \vee b$) for the minimum
(resp. maximum) of any two numbers $a,b \in \R$.

Given  $m \in \N$ and functions
$f: \N \cap [m,\infty) \to \R$ and
$g: \N \cap [m,\infty) \to (0,\infty)$,
we write $f(n) = O(g(n))$ as $n \to \infty$ 
if  
$\limsup_{n \to \infty }|f(n)|/g(n) < \infty$.
We write $f(n)= o(g(n))$ as $n \to \infty$
if $\lim_{n \to \infty} f(n)/g(n) =0$, 
and $f(n) \sim g(n)$ as $n \to \infty$
if $\lim_{n \to \infty} f(n)/g(n) =1$.
We write $f(n)= \Theta(g(n))$ as $n \to \infty$
if $f$ takes only positive values, and
both $f(n)= O(g(n))$ and $g(n)= O(f(n))$.
Given $s >0$ and  functions $f: (0,s) \to \R$ and $g:(0,s) \to (0,\infty)$,
we write $f(r) = O(g(r))$ as $r \downarrow 0$
$\limsup_{r \downarrow 0} |f(r) |/g(r) < \infty$,
and write
$g(r) = \Omega(g(r))$ as $ r \downarrow 0$, if 
$\liminf_{r \downarrow 0} f(r) /g(r) > 0$.
We write $f(r)= o(g(r))$ as $r \downarrow 0$
if $\lim_{r \downarrow 0 } f(r)/g(r) =0$,
and $f(r) \sim g(r)$ as $r \downarrow 0$
if this limit is 1.

From time to time we shall use the following geometrical lemma.

\begin{lemm}[Geometrical lemma]
\label{lemgeom1a}
Suppose $\partial A \in C^2$, and
$A = \overline{A^o}$.  Given $\eps > 0$,
there  exists $r_0 = r_0(d,A, \eps)>0$  such that
\bea
|  B(x,r) \cap A| \geq ((\omega_d/2)-\eps)  r^d,
~~~~ \forall x \in A, r \in (0,r_0).
\label{e:0430a}
\eea
\end{lemm}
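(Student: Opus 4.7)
The proof splits into two cases according to whether $x \in A$ is far from or near the boundary. The case where $\dist(x, \partial A) \geq r$ is trivial since then $B(x,r) \subset A$, giving $|B(x,r) \cap A| = \omega_d r^d$. For $x$ close to $\partial A$, the plan is to invoke the \emph{positive reach} (or \emph{rolling ball}) property that follows from compactness and the $C^2$ assumption: there exists $r_1 = r_1(A) > 0$ such that for every $y \in \partial A$ there is a closed ball of radius $r_1$ contained in $A$ and tangent to $\partial A$ at $y$, and such that every point within distance $r_1$ of $\partial A$ has a unique nearest point on $\partial A$. (Here the hypothesis $\overline{A^o}=A$ is used to ensure that the inward normal direction is genuinely into $A$.)

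\textbf{The near-boundary estimate.} Assume $r \leq r_1/2$ and $t := \dist(x, \partial A) < r$. Let $y \in \partial A$ denote the unique nearest boundary point, $\nu$ the inward unit normal at $y$, and $p := y + r_1 \nu$, so $B(p, r_1) \subset A$ and $x = y + t\nu$. Place coordinates with $y$ at the origin and $\nu = e_d$. Then $B(p, r_1) = \{z \in \R^d : |z|^2 \leq 2 r_1 z_d\}$. For $z \in B(x,r)$ the expansion
\[
|z|^2 = |z - t e_d|^2 + 2 t z_d - t^2 \leq r^2 + 2 t z_d
\]
implies that $z \in B(p, r_1)$ as soon as $z_d \geq h := r^2/(2(r_1-t))$; note $h \leq r^2/r_1$ since $t < r \leq r_1/2$. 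Hence
\[
|B(x,r) \cap A| \; \geq \; |B(p, r_1) \cap B(x,r)| \; \geq \; \bigl|\{z \in B(x,r) : z_d \geq h\}\bigr|.
\]

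\textbf{Finishing up.} Because $x_d = t \geq 0$, the half-ball $\{z \in B(x,r) : z_d \geq 0\}$ already has volume at least $\omega_d r^d/2$. Removing from it the slab $\{0 \leq z_d < h\} \cap B(x,r)$, which has volume at most $\omega_{d-1} r^{d-1} h \leq (\omega_{d-1}/r_1) r^{d+1}$, gives
\[
|B(x,r) \cap A| \; \geq \; \frac{\omega_d}{2} r^d - \frac{\omega_{d-1}}{r_1} r^{d+1} \; = \; \Bigl(\frac{\omega_d}{2} - \frac{\omega_{d-1}}{r_1} r\Bigr) r^d,
\]
which is at least $(\omega_d/2 - \eps) r^d$ for all $r \leq r_0 := \min\bigl(r_1/2,\, \eps r_1/\omega_{d-1}\bigr)$; this also covers the easy far-from-boundary case.

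\textbf{Main obstacle.} The only non-elementary input is the positive reach / rolling ball property, which is a standard consequence of compactness and $C^2$ regularity of $\partial A$ (e.g.\ via Federer's tubular neighborhood theorem); once this is in hand, the remaining estimate is an explicit two-ball intersection comparison against a half-space, with the curvature correction entering as a term of order $r^{d+1}$ that is absorbed by choosing $r_0$ small.
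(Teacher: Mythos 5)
Your proof is correct. The paper itself does not prove this lemma; it simply cites \cite[Lemma 3.2(a)]{CovXY}, so there is no in-paper argument to compare against. Your self-contained approach is a legitimate proof: you reduce to the uniform interior ball (positive reach) condition for compact $C^2$ domains, then compare $B(x,r)$ with an interior tangent ball $B(p,r_1)$, observing that the deficit relative to a half-ball is a slab of thickness $h = r^2/(2(r_1-t)) \leq r^2/r_1$ and hence of volume $O(r^{d+1})$, absorbed by shrinking $r_0$. The algebra checks out: the identity $|z|^2 = |z-te_d|^2 + 2tz_d - t^2$ correctly gives the membership condition $z_d \geq h$, the hyperplane $\{z_d=0\}$ lies at or below the centre so the upper half-ball has volume at least $\omega_d r^d/2$, and the cylinder bound $\omega_{d-1} r^{d-1} h$ for the removed slab is right. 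Two small observations: you do not actually need uniqueness of the nearest boundary point $y$ — only that some nearest point has $x - y$ normal to $\partial A$ at $y$, which is automatic for any nearest point — and the far-from-boundary case, as you say, gives the full volume $\omega_d r^d$ so folds in for free. The only non-elementary ingredient you invoke is the interior-rolling-ball property for compact $C^2$ boundaries, which is standard and fine to cite.
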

\begin{proof}
	See \textcolor{\blue}{\cite[Lemma 3.4]{CovXY} (or for a proof from first principles,
	Lemma 3.2(i) of v1 of that paper on Arxiv).}
\end{proof}

\subsection{Coverage in the boundary by a SPBM in a half-space}
\label{ss:half-space}

In this subsection, we 
assume $(r_n)_{n >0}$ satisfies  
\eqref{e:rn} 
for some $\beta \in \R$.
Then  as $n \to \infty$,
\begin{align}
r_n = \left( \frac{(2 - 2/d) \log n + 2(d +k-3 + 1/d) \log \log n
+ \beta +o(1)}{n \omega_d \E[Y^d]} \right)^{1/d},
\label{e:trho}
\end{align}
and
\bea
\label{rtd+}
\exp(-\omega_d  n r_n^d \E[Y^d]) \sim
n^{-(2 - 2/d)} (\log n)^{-2(d+k-3+1/d)} e^{-\beta}.
\eea

We shall use the following notation throughout the sequel.  
Given $n >0$ let $\cU_n$
be a  Poisson point process in $\R^d \times \R_+$ 
(viewed as a random set of points in $\R^{d+1}$)
with intensity measure 
$n {\rm Leb}_d \otimes \mu_Y$, where ${\rm Leb}_d$ denotes $d$-dimensional
Lebesgue measure and $\mu_Y$ denotes the distribution of $Y$.
Given Borel $D \subset \R^d$, 
and given any point process $\X$ in $\R^d \times \R_+$,
we define
\begin{align}
\cU_{n,D} & := \cU_{n} \cap (D \times \R_+);
\label{e:Zdef}
\\
Z_n(\X) & : =  
\{y \in \R^d: \# \{(x,s) \in \X: y \in B(x,r_ns)\} \geq k\};
\label{e:occ}
\\
Z_n^o(\X) &:=
(Z_n(\X))^o.
\label{e:occint}
\end{align}
Thus, $\cU_{n,D}$ is  a Poisson process in
$D \times \R_+$
with intensity measure $n {\rm Leb}_d \otimes \mu_Y$
(strictly speaking, 
with intensity measure $n {\rm Leb}_d|_D \otimes \mu_Y$).
Equivalently, it can be viewed
as a homogeneous, independently {\em marked} Poisson process
in $D$ of intensity $n$ with marks having the distribution of $Y$
(see e.g.\ \cite[Theorem 5.6]{LP}.)

The set $Z_n(\cU_{n,D})$  
is the region that is covered at least $k$ times
(for short: the $k$-covered region)
for the restricted SPBM in $D$
with intensity $n$ and with radii having the distribution of
$r_n Y$. It is the same as $Z_{D,k}(n,r_nY)$ in earlier notation;
we now consider $k$ to be fixed and suppress it from the notation.
The set $Z_n^o(\cU_{n,D})$ is the region covered at least $k$ times
by a union of {\em open} balls.

Recall that  $\bH := \R^{d-1} \times [0,\infty)$.
The main \textcolor{\blue}{results of this subsection
are Lemmas \ref{lemhalfd} and \ref{lemhalfdint}  below, concerning}
the limiting
probability of covering a $(d-1)$-dimensional
region of the form $\Omega \times \{0\} $ in
the hyperplane  $\partial \bH := \R^{d-1} \times \{0\}$ by
$Z_n(\cU_{n,\bH})$,
or of covering
the $a_nr_n$-neighbourhood of $\Omega \times \{0\}$ in $\bH$, for $a_n$
not growing too fast with $n$.
It is crucial for dealing with boundary regions 
in the proof of Theorem \ref{thm3d}.

Before getting to that, we present another idea which will be important
later.
Let $\xxi >0$ to be chosen later (think of $\xxi$ as a small constant). 
Given Borel $D \subset \R^d$, 
we consider two separate, independent Poisson processes
$\cU'_{n,D}$ \textcolor{\blue}{and $\cU''_{n,D}$}, defined by
\begin{align}
\cU'_{n,D} = \cU_n \cap (D \times [0,n^\xxi]);
~~~~~~~~~
\cU''_{n,D} = \cU_n \cap (D \times (n^\xxi,\infty)).
\label{e:U'}
\end{align}
For various sets $D$ and ultimately for $D=A$, we shall be interested
in coverage of certain regions by the set $Z_n(\cU_{n,D})$; the next
lemma says that the limiting probability of coverage is unaffected
if we take
$Z_{n}(\cU'_{n,D})$
instead of
$Z_{n}(\cU_{n,D})$; this is useful because the random sets 
$Z_{n}(\cU'_{n,D})$ have better spatial independence properties, considered
as a function of $D$.

\begin{lemm}[Asymptotic equivalence of coverage events using $\cU_{n,D}$
or $\cU'_{n,D}$]
\label{l:2PPs}
Let $n_0 \in (0,\infty)$.
Suppose that Borel  $D_n \subset \R^d$ and $ E_n \subset \R^d$ 
are defined for each $n \geq n_0$.  Then
as $n \to \infty$,
\begin{align}
\Pr[\{E_n \subset  Z_n(\cU_{n,D_n} )\} \setminus
\{E_n \subset Z_n(\cU'_{n,D_n})\}] \to 0. 
\label{e:0212d}
\end{align}
\end{lemm}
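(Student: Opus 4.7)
The plan is to upper-bound the difference event by a simpler event on which a single large-mark ball is responsible for the discrepancy, and then to kill that event using the moment hypothesis on $Y$. Let $F_n$ denote the event that there exists $(x,s) \in \cU_n$ with $x \in D_n$, $s > n^\xxi$ and $B(x,r_n s) \cap E_n \neq \emptyset$. The first step is the set-theoretic inclusion
\[
\{E_n \subset Z_n(\cU_{n,D_n})\} \setminus \{E_n \subset Z_n(\cU'_{n,D_n})\} \subset F_n,
\]
which holds because on $F_n^c$ the multiset of balls from $\cU_{n,D_n}$ covering any given $y \in E_n$ coincides with the multiset from $\cU'_{n,D_n}$, so the $k$-coverage status of every point of $E_n$ is the same under both processes.

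It then suffices to show $\Pr[F_n] \to 0$. I would apply Markov together with Campbell's formula for the independently marked Poisson process $\cU_n$ to get
\[
\Pr[F_n] \leq n \int_{n^\xxi}^\infty |D_n \cap (E_n \oplus B(o, r_n s))| \, d\mu_Y(s).
\]
In the intended applications $E_n$ sits inside a fixed bounded neighbourhood of $A$, so one may assume $E_n \subset B(o,R)$ with $R$ independent of $n$. Then the integrand is at most $\omega_d(R + r_n s)^d \leq 2^{d-1} \omega_d(R^d + r_n^d s^d)$, which after integration splits as
\[
\Pr[F_n] \leq C n R^d \Pr[Y > n^\xxi] + C n r_n^d \E[Y^d \1\{Y > n^\xxi\}].
\]

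Under the hypothesis $\E[Y^\gamma] < \infty$ with $\gamma > d$, Markov yields $\Pr[Y > n^\xxi] = O(n^{-\gamma \xxi})$ and $\E[Y^d \1\{Y > n^\xxi\}] = O(n^{-(\gamma - d)\xxi})$; combined with $n r_n^d = O(\log n)$ from \eqref{e:trho}, these give $\Pr[F_n] = O(n^{1 - \gamma \xxi}) + O((\log n)\, n^{-(\gamma - d)\xxi})$. Choosing $\xxi \in (1/\gamma, 1/d)$ --- a non-empty range because $\gamma > d$ --- makes both terms vanish, which is what we want.

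The main obstacle, and essentially the only technical content, is the calibration of $\xxi$: one needs $\xxi > 1/\gamma$ so that the tail of $Y$ beats the Poisson intensity $n$, yet $\xxi < 1/d$ so that a typical ``large'' ball with mark of order $n^\xxi$ still has Euclidean radius $r_n n^\xxi = o(1)$, preserving the small-balls regime exploited in later independence arguments. The strict moment gap $\gamma > d$ is precisely what opens up this window, and this explains why the $(d+\eps)$-th moment hypothesis in Theorems~\ref{th:general2d} and \ref{th:generaldhi} is imposed.
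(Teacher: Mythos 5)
Your set-theoretic reduction $\{E_n\subset Z_n(\cU_{n,D_n})\}\setminus\{E_n\subset Z_n(\cU'_{n,D_n})\}\subset F_n$ is correct, but the estimate of $\Pr[F_n]$ has a genuine gap that makes the proof fail exactly where it matters.

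Your Campbell/union bound produces the term $C n R^d\,\Pr[Y>n^\xxi]$, and killing it requires $n^{1-\gamma\xxi}\to 0$, i.e.\ $\xxi>1/\gamma$. But $\xxi$ is \emph{not} a free parameter for you to calibrate inside this lemma: it is the same $\xxi$ appearing in the definition \eqref{e:U'} of $\cU'_{n,D}$, and elsewhere in the paper it is forced to be small --- e.g.\ $\xxi<1/(198d(18+d))$ in \eqref{eqgamma}, and $\xxi<a\delta/(6\pi\E[Y^2])$ in Lemma~\ref{lemQ}. Since the only moment assumption is $\gamma>d$, one may have $\gamma$ arbitrarily close to $d$, in which case $1/\gamma$ is close to $1/d$ and the window $(1/\gamma,1/d)$ fails to contain the value of $\xxi$ the paper actually needs. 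In short, your argument proves a version of the lemma only for $\xxi$ large, while the lemma is invoked for $\xxi$ small. (You also quietly add the hypothesis $E_n\subset B(o,R)$, which is not in the statement; that is more forgivable since it holds in the intended applications, but it is still an extra assumption.)

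The paper's proof avoids the union bound entirely, which is what lets it work for \emph{every} $\xxi>0$. It first shows the pointwise bound $\sup_{x\in\R^d}\Pr[x\in Z_n(\cU''_{n,D_n})]\to 0$ (one H\"older step, no $R^d$ factor, only $n r_n^d\cdot n^{-\xxi(\gamma-d)}=O((\log n)n^{-\xxi(\gamma-d)})$, which vanishes for any $\xxi>0$). It then exploits separability of $E_n$: taking a countable dense set, letting $J_n$ be the first index $i$ with $x_{n,i}\notin Z_n(\cU'_{n,D_n})$ (finite if $E_n\not\subset Z_n(\cU'_{n,D_n})$ because the complement of $Z_n(\cU'_{n,D_n})$ is open), and noting $x_{n,J_n}$ is $\cU'$-measurable while $\cU''$ is independent, it reduces the bad event to the single-point event that $x_{n,J_n}$ is hit by a ball from $\cU''$, which the uniform pointwise bound controls directly. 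That adaptive choice of a single point is the key idea you are missing, and it is what converts a bound needing $\xxi>1/\gamma$ into one needing only $\xxi>0$.

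Your closing remark that this explains the $(d+\eps)$-moment hypothesis is also misattributed: the moment condition is needed elsewhere (e.g.\ for the integrability of $h$ in Lemma~\ref{l:inth} and for Lemma~\ref{lemHall}); the present lemma holds, via the paper's argument, for any $\gamma>d$ and any $\xxi>0$ without further trade-off.
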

\begin{proof}
Recall that we are assuming $\E[Y^\gamma] < \infty$ for some 
$\gamma >d$.
Given $x \in \R^d$, let $N_n(x)$ denote the number of points
$(y,t) \in \cU''_{n,D_n}$ such that $\|y-x\| \leq r_n t$. Then
by the H\"older and Markov inequalities,
\begin{align*}
\Pr[N_n(x) \geq 1 ] \leq
\E[N_n(x)] & \leq n \int_{(n^\xxi,\infty)}  |B(x,r_nt)| \mu_Y(dt)
\\
& = n \omega_d r_n^d \E[Y^d {\bf 1}_{\{Y > n^\xxi\}} ] 
\\
& \leq n \omega_d r_n^d (\E[Y^{\gamma}])^{d/\gamma}
(\Pr[Y > n^\xxi ])^{(\gamma -d)/\gamma} 
\\
& \leq \omega_d
(\E[Y^{\gamma}])^{d/\gamma}
nr_n^d  (\E[Y^{\gamma}]/n^{\xxi \gamma}
)^{(\gamma - d)/\gamma},
\end{align*}
which tends to zero since
$nr_n^d = O(\log n)$ by \eqref{e:trho}.  Therefore
\begin{align}
\sup_{x \in \R^d} \Pr[x \in Z_n(\cU''_{n,D_n} ) ] \to 0 
~~
{\rm as} ~ n \to \infty.
\label{e:0212e}
\end{align}
Let $n \geq n_0$.
Since $E_n \subset \R^d$, $E_n$ is separable;
see e.g.\ \cite[page 20]{Pryce}. Let $\{x_{n,i}\}_{i \in \N}$
be an enumeration of a  countable dense set in $E_n$.  
Define the ($\N \cup +\infty$)-valued random variable
$J_n$ to be the first $i$ such that 
$x_{n,i} \notin Z_n(\cU'_{n,D_n})$,
or $J_n := +  \infty$  if there is no such $i$.
Since
$E_n \setminus Z_n(\cU'_{n,D_n})$ is open in $E_n$, if 
$E_n \setminus Z_n(\cU'_{n,D_n}) \neq \emptyset $ then $J_n < \infty$.
Therefore 
\begin{align*}
\Pr[ E_n \subset Z_n(\cU_{n,D_n}) |\{E_n \subset
Z_n(\cU'_{n,D_n}) \}^c] \leq
\Pr[x_{n,J_n} \in Z_n(\cU''_{n,D_n})|J_n < \infty] ,
\end{align*}
which tends to zero by \eqref{e:0212e} and the independence
of $\cU'_{n,D_n}$ and 
$\cU''_{n,D_n}$,
and \eqref{e:0212d}
follows.
\end{proof}

The following terminology and notation will be used
\textcolor{\blue}{repeatedly in the sequel.}
Given $x \in \R^d$, we let $\pi_1(x),\ldots,\pi_d(x)$ denote
the co-ordinates of $x$, and refer to $\pi_d(x)$ as the
{\em height} of $x$.
Given $\bx_1 = (x_1,s_1), \ldots, \bx_d = (x_d,s_d) \in \R^d
\times \R_+$, 
if $\cap_{i=1}^d \partial B(x_i,s_i)$
consists of exactly two points, we refer to these as
$p(\bx_1,\ldots,\bx_d)$ and
$q(\bx_1,\ldots,\bx_d)$ 
with $p(\bx_1,\ldots,\bx_d)$ at a
smaller height than 
$q(\bx_1,\ldots,\bx_d)$ 
(or if they are at the same height, take 
$p(\bx_1,\ldots,\bx_d) <  
q(\bx_1,\ldots,\bx_d)$ in the lexicographic ordering).
Define the indicator function
\begin{align}
h(\bx_1,\ldots,\bx_d) := & {\bf 1} \{
\pi_d(x_1) \leq \min(\pi_d(x_2), \ldots, \pi_d(x_d)) \}
\nonumber \\
& \times {\bf 1} \{ \#( \cap_{i=1}^d \partial B(x_i,r_i)  
) = 2 \} {\bf 1} \{ \pi_d(x_1) <
\pi_d(q(\bx_1,\ldots,\bx_d))\} .  
\label{hrdef2a}
\end{align}
Given also $n >0$ \textcolor{\blue}{we define} 
\begin{align}
h_n(\bx_1,\ldots,\bx_d):=h((x_1,r_ns_1),\ldots,(x_d,r_ns_d));
\label{e:hn}
\\
p_n(\bx_1,\ldots,\bx_d):=p((x_1,r_ns_1),\ldots,(x_d,r_ns_d));
\label{e:pn}
\\
q_n(\bx_1,\ldots,\bx_d):=q((x_1,r_ns_1),\ldots,(x_d,r_ns_d)).
\label{e:qn}
\end{align}
\textcolor{\blue}{These notations are illustrated in Figure
\ref{f:pnqn}.}
\begin{figure}[!h]
        \center
        \includegraphics[width=15cm]{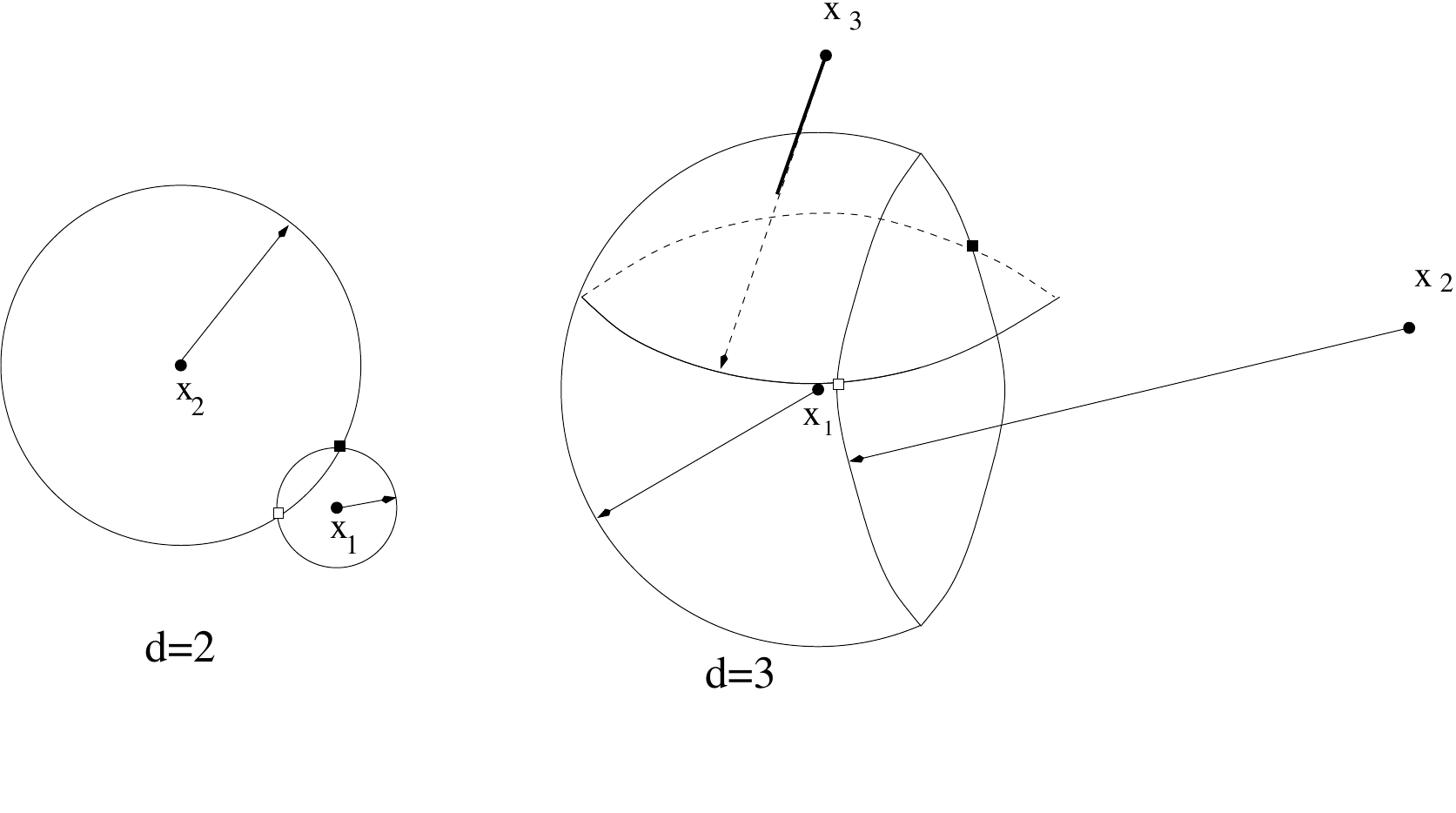}
	\caption{\label{f:pnqn} \textcolor{\blue}{
	Examples in $d=2$ and $d=3$ showing
	a $d$-tuple $\{\bx_1,\ldots,\bx_d\}$,
	satisfying  $h_n(\bx_1,\ldots,\bx_d)=1$ and showing
	$p_n(\{\bx_1,\ldots ,\bx_d\})$ (white square)
	and
	$q_n(\{\bx_1,\ldots,\bx_d\})$ (black square),
	where, for $i= 1,\ldots,d$, $\bx_i= (x_i,s_i)$ and 
	the arrow from $x_i$ is of length $r_n s_i$.}
        }
\end{figure}
\begin{lemm}[Integrability of $h$]
\label{l:inth}
There exists a constant $c$ depending only on $d$ such 
that for all $x_1 \in \R^d$ and all $s_1,\ldots,s_d \in \R_+$,
\begin{align}
\int_{\R^d} \cdots \int_{\R^d}
h((x_1,s_1),\ldots,(x_d,s_d)) dx_2 \cdots dx_d 
\leq c\prod_{i=1}^d (1 \vee s_i^{d-1}).
\label{e:ubinth}
\end{align}
\end{lemm}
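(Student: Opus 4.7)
The plan is to parametrize the support of $h$ in $(\R^d)^{d-1}$ using unit vectors radiating from a common intersection point of the $d$ spheres. When $h((x_1,s_1), \ldots, (x_d,s_d)) = 1$, the spheres $\partial B(x_i, s_i)$, $i=1,\ldots,d$, share exactly two intersection points; fix one such point $p$ and write $x_i = p + s_i u_i$ with $u_i := (x_i - p)/s_i \in S^{d-1}$. Since $x_1$ is given, specifying $u_1 \in S^{d-1}$ determines $p = x_1 - s_1 u_1 \in \partial B(x_1,s_1)$, and then $u_2, \ldots, u_d \in S^{d-1}$ determine $x_2, \ldots, x_d$. This defines a smooth map
\[
	\Phi : (S^{d-1})^d \to (\R^d)^{d-1}, \qquad \Phi(u_1, \ldots, u_d) := (x_1 + s_i u_i - s_1 u_1)_{i=2}^d.
\]
Both the domain and codomain have dimension $d(d-1)$; the image of $\Phi$ contains $\{h=1\}$ up to a Lebesgue-null set, and since there are two choices of intersection point $p$, the map $\Phi$ is generically two-to-one.

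Next I would compute the Jacobian determinant of $\Phi$. Tangent vectors at $(u_1, \ldots, u_d)$ take the form $(\delta u_1, \ldots, \delta u_d)$ with $\delta u_i \in u_i^\perp$, and the derivative of $\Phi$ acts by $\delta x_i = s_i \delta u_i - s_1 \delta u_1$ for $i = 2, \ldots, d$. Writing $d\Phi$ as a block matrix with columns grouped by the index $i$, each column block has an overall factor $s_i$ (arising because the sphere of radius $s_i$ is a scaling by $s_i$ of $S^{d-1}$, contributing $s_i^{d-1}$ to the surface Jacobian). Pulling these out yields
\[
	|\det d\Phi(u_1, \ldots, u_d)| = \Big( \prod_{i=1}^d s_i^{d-1} \Big) |G(u_1, \ldots, u_d)|,
\]
for a continuous function $G$ on the compact set $(S^{d-1})^d$ depending only on the $u_i$'s (not on the $s_i$'s); hence $G$ is uniformly bounded by a constant depending only on $d$.

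Finally, applying the area formula and using the generic 2-to-1 multiplicity,
\[
	\int h \, dx_2 \cdots dx_d \leq \tfrac{1}{2} \int_{(S^{d-1})^d} \Big( \prod_{i=1}^d s_i^{d-1} \Big) |G(u)| \, d\sigma^{\otimes d}(u) \leq c \prod_{i=1}^d s_i^{d-1} \leq c \prod_{i=1}^d (1 \vee s_i^{d-1}),
\]
with $c$ depending only on $d$, which is the desired bound. (When $s_i = 0$ for some $i$, the indicator $h$ vanishes identically since $\#(\cap_j \partial B(x_j,s_j)) \leq 1 < 2$, so the bound is trivial in that case.) The main obstacle is the Jacobian computation: once one identifies the block-matrix structure and extracts the $\prod s_i^{d-1}$ factor cleanly, the remaining compactness argument to bound $G$ and the finiteness of $\sigma(S^{d-1})^{\otimes d} = (d\omega_d)^d$ are routine.
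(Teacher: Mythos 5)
Your proof is correct but takes a genuinely different route from the paper's. The paper discretizes $\R^d$ into unit hypercubes $\{Q_z\}_{z \in \Z^d}$, observes that $h = 1$ forces $\cap_{i=1}^d \partial B(x_i,s_i)$ to meet some $Q_z$, hence $h \leq \sum_z \prod_i {\bf 1}\{\partial B(x_i,s_i) \cap Q_z \neq \emptyset\}$, and then applies Fubini together with the two elementary estimates $\int_{\R^d}{\bf 1}\{\partial B(x,s)\cap Q_z \neq\emptyset\}\,dx = O(1\vee s^{d-1})$ and $\sum_z {\bf 1}\{\partial B(x,s)\cap Q_z \neq\emptyset\} = O(1\vee s^{d-1})$. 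Your parametrization of $\{h=1\}$ by unit vectors radiating from an intersection point, followed by the area formula and the block-Jacobian scaling, is a more geometric argument that avoids the discretization entirely; it also yields the slightly sharper bound $c\prod_i s_i^{d-1}$ (without the $1\vee$), which the cube argument cannot match when some $s_i$ is small — though the paper has no need of this refinement. Two small points of precision in your write-up: first, $G$ as defined depends on a choice of orthonormal frame for each $u_i^\perp$, and no continuous global choice exists, so what is continuous and bounded on the compact $(S^{d-1})^d$ is $|G|$ (equivalently $(\det(d\Phi^{*}d\Phi))^{1/2}/\prod_i s_i^{d-1}$), which is all you need; second, the two-to-one claim is not merely generic on $\{h=1\}$ but exact there, since the fibre $\Phi^{-1}(x_2,\ldots,x_d)$ is in bijection with $\cap_i\partial B(x_i,s_i)$, which $h=1$ forces to have cardinality exactly two. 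With those cosmetic fixes your argument is a complete and correct alternative proof.
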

\begin{proof}
Denote the left hand side of \eqref{e:ubinth} by 
$I(x_1,s_1,\ldots,s_d)$.
Divide $\R^d$ into rectilinear unit hypercubes
$Q_z, z \in \Z^d$, with $Q_z$
centred at $z$ for each $z$. 
Then
\begin{align*}
I(x_1,s_1,\ldots,s_d)
&	\leq \sum_{z \in \Z^d}
\int_{\R^d} \cdots \int_{\R^d}
{\bf 1}\{ \cap_{i=1}^d \partial
B(x_i,s_i) \cap Q_z \neq \emptyset\}
dx_2 \cdots dx_d
\\
& \leq \sum_{z \in \Z^d} 
\int_{\R^d} \cdots \int_{\R^d}
\prod_{i=1}^d
{\bf 1}\{  \partial
B(x_i,s_i) \cap Q_z \neq \emptyset\}
dx_2 \cdots dx_d
\\
& = \sum_{z \in \Z^d} 
{\bf 1}\{  \partial B(x_1,s_1) \cap Q_z \neq \emptyset\} 
\times
\prod_{i=2}^d
\int_{\R^d} 
{\bf 1}\{  \partial B(x_i,s_i) \cap Q_z \neq \emptyset\}
dx_i.
\end{align*}
Given $s \geq 0$,
the value of the integral 
$\int_{\R^d} {\bf 1}\{  \partial B(x,s) \cap Q_z \neq \emptyset\}
dx$ does not depend on $z$, and is bounded by $c' (1 \vee s^{d-1})$
for some constant $c'$, independent of $s$.
Moreover the sum $\sum_{z \in \Z^d} 
{\bf 1}\{  \partial B(x,s) \cap Q_z \neq \emptyset\} 
$ is bounded by $c'' (1 \vee s^{d-1})$
for some further constant $c''$, independent of $s$ and $x$.
Applying these two observations gives us
\eqref{e:ubinth}.
\end{proof}

We are now ready to present the \textcolor{\blue}{first main result
of this section.
In this result}, $|\Omega|$ 
denotes the $(d-1)$-dimensional
Lebesgue measure of $\Omega$.
Recall the definition of $c_{d,1}$ at \eqref{cd1def}.

\begin{lemm}[Coverage \textcolor{\blue}{of a portion of the boundary} of a half-space]
\label{lemhalfd}
Let $\Omega \subset \R^{d-1}$ 
	be closed, bounded and
	Riemann measurable.
Assume that \eqref{e:rn} holds for some $\beta \in \R$
or $\beta = + \infty$,
and also that $ \limsup_{n \to \infty}
( n r_n^d/(\log n )) < \infty$. 
Then
\begin{align}
\lim_{n \to \infty} (\Pr[
\Omega \times \{0\}
\subset Z_n(\cU_{n,\bH})] 
) 
= 
	\exp \left(-
	c_{d,k} \Big( \frac{(\E[Y^{d-1}])^{d-1}}{(\E[Y^{d}])^{d-2+ 1/d}} \Big)
	|\Omega| 
	e^{-  \beta/2}  \right). 
	\label{0517c2a}
\end{align}
\end{lemm}
\begin{remk}
{\rm
	\textcolor{\blue}{When $\beta < \infty$,}
	the extra condition
	$ \limsup_{n \to \infty}
	( n r_n^{d}/(\log n )) < \infty$ 
	is automatic.
	When $\beta =\infty$, in  (\ref{0517c2a}) we use the convention
	$e^{-\infty} := 0$.}
\end{remk}

\begin{proof}[Proof of Lemma \ref{lemhalfd}]
Assume for now that $\beta < \infty$.
Considering the slices of  balls induced by the points of
$\cU_{n,\bH}$ that
intersect the hyperplane $\R^{d-1} \times \{0\}$,
we have a $(d-1)$-dimensional SPBM, 
where
we claim the parameters 
(in the notation of Lemma \ref{lemHall}) are
\bea
\delta = r_n, ~~~ \lambda = n r_n \E[Y], ~~~~
\alpha = \omega_{d} \E[Y^d]/(2 \E[Y]).
\label{e:Boo1a}
\eea 
We justify these claims as follows.
The intensity of balls intersecting
the hyperplane $\partial \bH_d$
	per \textcolor{\blue}{unit `area'} (i.e. per unit
$(d-1)$-dimensional Lebesgue measure),
is equal to $n \int_0^\infty \Pr[r_n Y \geq t]dt = n r_n \E[Y]$. 
Also if $W$ denotes the radius of a slice, divided
by $r_n$, the distribution
of $W$ is that of $\tilde{Y}(1-U^2)^{1/2}$, where 
$\tilde{Y}$ has the size-biased distribution of $Y$
(i.e., $\Pr[\tilde{Y} \in dt] = t \Pr[Y \in dt]/\E[Y]$),
and $U$ is uniformly distributed on $[0,1]$, independent
of $\tilde{Y}$. Therefore
%
\begin{align}
	\E[W^{d-1}] & = \E[\tilde{Y}^{d-1}]
	\int_0^1
	(1 -u^2)^{(d-1)/2} du 
	\nonumber	\\
	& =  (\E[Y^d]/\E[Y]) \omega_d/(2\omega_{d-1}), 
	\label{e:EYd}
\end{align}
and the asserted value of $\alpha$ follows.

By \eqref{e:Boo1a},  followed by \eqref{e:rn},
\begin{align*}
	\alpha \delta^{d-1} \lambda & =
	\Big( \frac{ \omega_d \E[Y^d]}{2 \E[Y]}
	\Big) r_n^{d-1} n r_n \E[Y]  = 
	(\omega_d/2) n r_n^d \E[Y^d] \\
	& = 
	(1-1/d) \log n  + (d+k-3 + 1/d)
	\log \log n +
	(\beta/2)
	+ o(1). 
\end{align*}
Also by \eqref{e:Boo1a} and \eqref{e:trho},
\begin{align*}
	\log \lambda 
	& =
	(1-1/d)
	\log n + \Big(\frac{1}{d}\Big)
	\log \log n +
	\log \Big(\Big(\frac{2-2/d}{\omega_d \E[Y^d]}\Big)^{1/d}
	\E[Y] \Big) 
	+ o(1).
\end{align*}
Also
$
\log \log \lambda = \log \log n + \log (1-1/d)
+ o(1).
$
Hence
\begin{align*}
	\alpha \delta^{d-1} \lambda - \log \lambda
	- (d+k-3) \log \log \lambda =  (\beta/2) -
	\log(c''_{d,k,Y}) + o(1),
\end{align*}
where we take
\begin{align*}
	c''_{d,k,Y} 
	& : = 
	\left( \frac{2-2/d}{\omega_d \E[Y^d]
	}\right)^{1/d} 
	\E[Y]
	(1-1/d)^{d+k-3}
	=
	(1-1/d)^{d+k-3+1/d}
	\left( \frac{2
		(\E[Y])^d
	}{\omega_d \E[Y^d]}
	\right)^{1/d} 
	.
\end{align*}

By  \eqref{e:EYd}, and a similar calculation  for $\E[W^{d-2}]$,
\begin{align*}
	\frac{(\E[W^{d-2}])^{d-1}}{( \E[W^{d-1}])^{d-2}} =
	\left( \frac{\omega_{d-1} \E[Y^{d-1}]}{2 \omega_{d-2}
		\E[Y]} \right)^{d-1} 
	\left( \frac{\omega_d \E[Y^d]}{2 \omega_{d-1} \E[Y]}
	\right)^{2-d}
	= 
	\frac{ \omega_{d-1}^{2d-3} (\E[Y^{d-1}])^{d-1}}{2 \omega_{d-2}^{d-1} 
		\omega_d^{d-2} (\E[Y^d])^{d-2} \E[Y]}.
\end{align*}
Thus by Lemma \ref{lemHall} we obtain that
\begin{align*}
	\lim_{n \to \infty}
	(\Pr[ \Omega \times \{0\} \subset Z_n(\cU_{n,\bH})])
	= \exp \Big(
	-
	\Big(\frac{c_{d-1} 
		\omega_{d-1}^{2d-3}
		(\E[Y^{d-1}])^{d-1}
	}{ (k-1)!
		2
		\omega_{d-2}^{d-1} 
		\omega_d^{d-2}
		\E[Y] (\E[Y^d])^{d-2}
	}\Big) \\
	\times c''_{d,k,Y} |\Omega| e^{-\beta/2}
	\Big), 
\end{align*}
and hence \eqref{0517c2a}.

Having now verified (\ref{0517c2a}) in the case where $\beta < \infty$,
we can then easily deduce (\ref{0517c2a}) in the other case too.
\end{proof}

\textcolor{\blue}{Next we aim to show that for any bounded Borel set $\Omega$ in $\R^{d-1}$,
the probability of there being any uncovered region in $\bH$, lying close
to the boundary region $\Omega \times \{0\}$ but not intersecting
$\partial \bH$ itself, is vanishingly small. We shall do this
by using the fact that such a region must have
an ``exposed lower corner''  in $\bH$ near $\Omega \times \{0\}$,
and estimating  the number of such corners.}

\textcolor{\blue}{For this argument we need further notation.
Given $\Omega \subset \R^{d-1}$, $a >0$, and given $(r_n)_{n >0}$,
let $M_n(\Omega,a)$ denote the number of
 $d$-tuples of marked points $\bx_1 = (x_1,s_1),\ldots,
\bx_d = (x_d,s_d) \in \cU_{n,\bH}$,
such that
$h_n(\bx_1,\ldots,\bx_d)=1$,
and moreover
$q_n(\bx_1,\ldots,\bx_d) \in 
( \Omega \times (0,a r_n])
\setminus Z_n(\cU_{n,\bH} \setminus \{\bx_1,\ldots,\bx_d\}) 
$.
Thus if $k=1$ then $M_n(\Omega,a) $ is
the number of corners of an uncovered region that lie in
$\Omega \times (0,a r_n]$ for which at least one of
the balls having that corner on its boundary has its centre
below the corner, as illustrated in Figure \ref{f:Mn}.} 

\begin{figure}[!h]
        \center
	\includegraphics[width=8cm]{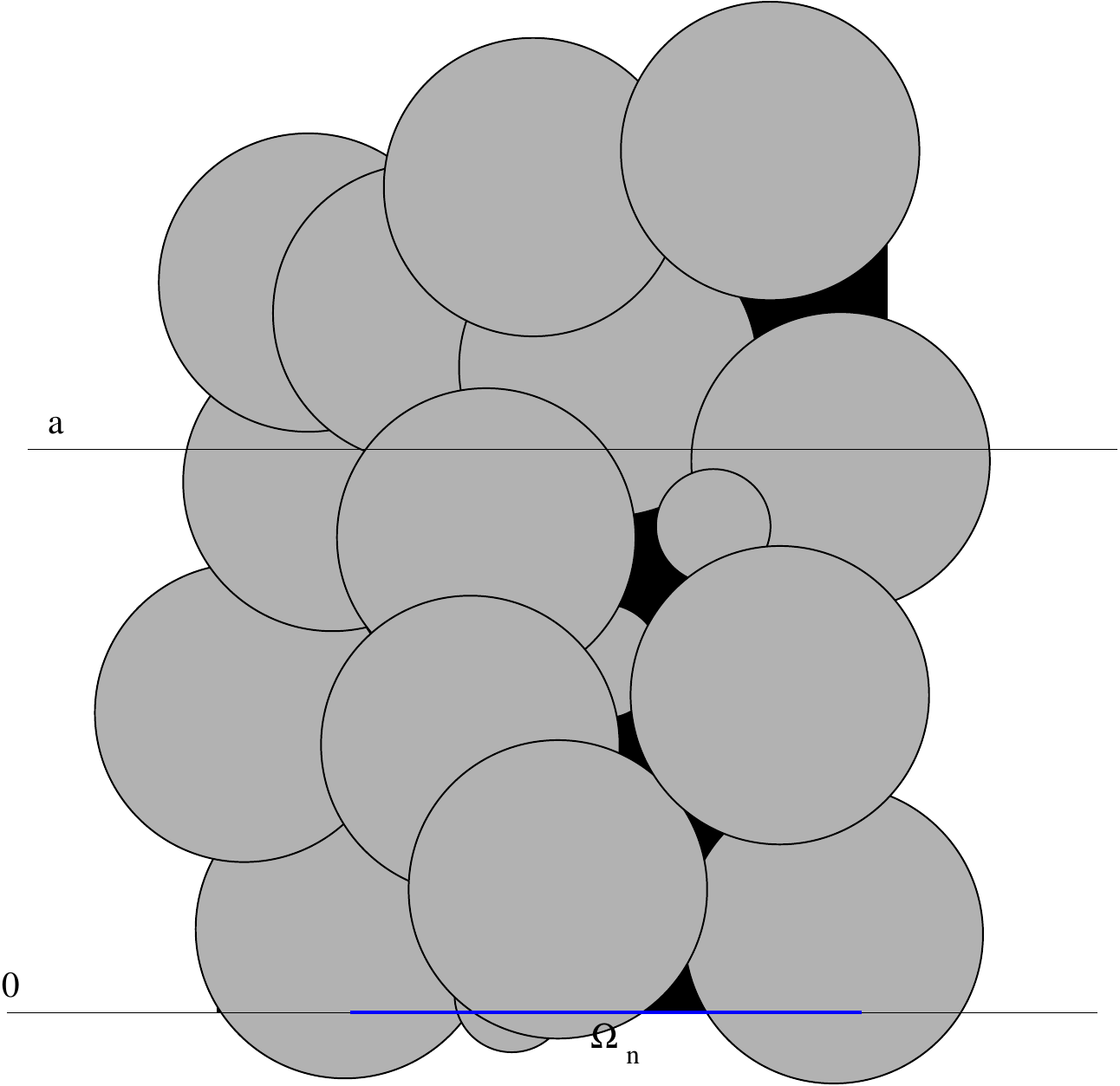} 
	\caption{\label{f:Mn}{Example in
	$d=2$ with $k=1$. Here $M_n(\Omega,a) = 11$ 
	since, taking the uncovered regions below height $a$ in descending
	order, we have a region with 5 corners that all contribute to
	$M_n(\Omega,a)$,
	one with 4 corners three of which contribute, 
	and one with 3 corners that all contribute, and a region
	with one interior corner that does not contribute.
	The thick blue line within the $x$-axis is the region $\Omega_n$.
	}
        }
\end{figure}

\begin{lemm}[\textcolor{\blue}{Estimating the mean number of exposed lower corners
	near $\partial \bH$}]
	\label{l:corners}
	\textcolor{\blue}{Let $\Omega $ be a bounded Borel set in $\R^{d-1}$,
	let $a \in [1,\infty)$ and let $(r_n)_{n >0}$ satisfy
	\eqref{e:rn}  for some $\beta \in \R$
or $\beta = + \infty$,
and that $ \limsup_{n \to \infty} ( n r_n^d/(\log n )) < \infty$. 
	Let $a \in [1,\infty).$ Then}
\begin{align}
	\lim_{n \to \infty} (\E[M_n(\Omega,a)]) =  0.
	\label{e:0212c}
\end{align}
\end{lemm}
\begin{proof}
Choose \textcolor{\blue}{$\delta \in (0,1)$}
such that $\Pr[ Y > \delta ] > \delta$.
Then \textcolor{\blue}{we can and do} choose $c' >0$ so that  
$$
\omega_{d-1} \delta^d 2^{1-d} ((\delta/2) \wedge u) \geq c'u,
~~~
\forall u \in [0,a].
$$
\textcolor{\blue}{
	To be definite, take $c':=
a^{-1} \omega_{d-1} \delta^{d} 2^{-d}$; then the displayed inequality
holds for $u=a$, and hence also for smaller $u$.
	}

For any $y \in \bH$, let $N_n(y)$ denote the number of balls
making up $Z_n(\cU_{n,\bH})$ which cover $y$, i.e. the number
of $(x,s) \in \cU_{n,\bH}$ such that $\|x-y\| \leq r_n s$.
Also, let $\bH_y := \{z \in \bH:\pi_d(z) > \pi_d(y) \}$.
Then for all $n$ and
all  $y \in \bH$ with $0 \leq \pi_d(y) \leq ar_n$,
for all $s >0$, the half-ball $B(y,r_ns) \cap \bH_y$, and also
the cylinder with radius $sr_n/2$,
upper face centred on $y$ and height $(r_ns/2) \wedge \pi_d(y)$, 
are disjoint and contained in $B(y,r_ns) \cap \bH$, 
\textcolor{\blue}{as illustrated in Figure \ref{f:mushroom}.}
\begin{figure}[!h]
        \center
	\includegraphics[width=8cm]{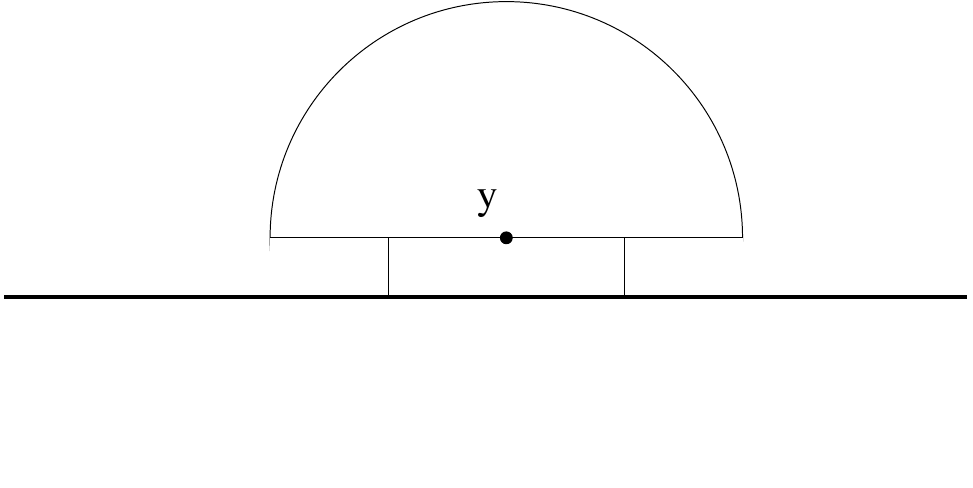}
	\vspace{-1cm}
	\caption{\label{f:mushroom} \textcolor{\blue}{Illustration
	of the geometric estimate
	used to derive equation \eqref{e:0212b}. The thick line at
	the bottom is $\partial \bH$.}}
\end{figure}
Therefore
\begin{align}
	\E[N_n(y)] & = n \int_{\R_+}  |B(y,r_ns) \cap \bH| \mu_Y(ds) 
	\nonumber \\
	& \geq n \int_{\R_+} (\omega_d r_n^d s^d/2)\mu_Y(ds)
	+ n \int_{(\delta,\infty)} \omega_{d-1} (r_ns/2)^{d-1} ((r_ns/2)
	\wedge \pi_d(y))
	\mu_Y(ds)
	\nonumber	\\
	& \geq (n \omega_d/2) r_n^d \E[Y^d] + 
	n\delta \omega_{d-1} (\delta r_n/2)^{d-1} r_n
	((\delta /2) \wedge (\pi_d(y)/r_n)) 
	\nonumber	\\
	& \geq (n \omega_d/2) r_n^d \E[Y^d] + c' n r_n^{d-1} \pi_d(y).
	\label{e:0212b}
\end{align}
%
%
%

By the Mecke formula (see e.g.\ \cite{LP}), 
taking $Y_1,\ldots,Y_d$ to be independent random variables
with the  distribution of $Y$, 
there is a constant $c$ such that
\begin{align}
	\E[M_n(\Omega,a)]
	\leq & c n^d  \int_{\bH }  
	\cdots
	\int_{\bH }  \E [h_n((x_1,Y_1),\ldots,(x_d,Y_d))
	\nonumber \\
	& \times {\bf 1}\{  q_n((x_1,Y_1),\ldots,(x_d,Y_d)) \in \Omega
	\times [0,ar_n] \} (k (n\omega_d r_n^d)^{k-1})
	\nonumber \\
	& \times
	e^{- n (\omega_d/2) \E[Y^d]   r_n^{d}   - c'  n r_n^{d-1}
		\pi_d( q_n((x_1,Y_1),\ldots,(x_d,Y_d))  )}]
	dx_d \cdots dx_1
	.  ~~~
	\label{0522a3}
\end{align}

Now we change variables to $y_i= r_n^{-1}(x_i-x_1)$ for
$2 \leq i \leq d$, noting that 
\begin{align*}
	\pi_{d}(q_n((x_1,Y_1),\ldots,(x_d,Y_d))) 
	= \pi_d(x_1) +
	\pi_d( q_n((o,Y_1),(x_2 -x_1,Y_2), \ldots,  (x_d-x_1,Y_d)) ). 
\end{align*}
With these changes of variable, we get a factor of
$r_n^{d(d-1)}$ on changing from $(x_2,\ldots,x_d)$ to 
$(y_2,\ldots,y_d)$.
Hence by (\ref{rtd+}), there is a constant $c''$ such that
the expression  on the right hand side of (\ref{0522a3}) is at most
\begin{align}
	& c'' n^{d+k-1} r_n^{d(d+k-2)}
	n^{- (1-1/d)} (\log n)^{-(d+k-3+1/d)}
	\int_0^{a r_n} e^{-c'  n r_n^{d-1} u} du 
	\nonumber \\
	& \times
	\int_{\bH } \cdots \int_{\bH }
	\E [h_n((o,Y_1),(r_ny_2,Y_2),\ldots,(r_ny_d,Y_d)) 
	\nonumber
	\\ &  ~~~~~~~~~~~~~~~~~  \times
	e^{ - c' n r_n^{d-1} \pi_d( q_n((o,Y_1),(r_ny_2,Y_2),
		\ldots,(r_ny_d,Y_d))) } ] 
	dy_d \ldots dy_2.
	\label{0512b2}
\end{align}
In the last expression the first line is bounded by a constant times
the expression
\bean
n^{d+k-3+1/d} (r_n^d)^{d+k-2 - (d-1)/d} (\log n )^{-(d+k-3 +1/d)}
= (n r_n^{d}/\log n)^{d+k-3+1/d},
\eean
which is bounded 
because 
$n r_n^{d}
= O(\log n)$ 
by \eqref{e:trho}.
Using Fubini's theorem and the definitions of $h_n(\cdot)$
and $q_n(\cdot)$ in \eqref{e:hn} and \eqref{e:qn},
the expression in the second and third
lines of  (\ref{0512b2}) can be rewritten as
\begin{align*}
	\E \Big[
	\int_{\bH } \cdots \int_{\bH }
	h((o,Y_1),(y_2,Y_2),\ldots,(y_d,Y_d)) 
	e^{ - c' n r_n^{d} \pi_d( q((o,Y_1),(y_2,Y_2),
		\ldots,(y_d,Y_d))) }  
	dy_d \ldots dy_2 \Big]
	\\
	=  
	\int_{\R_+^d}
	\int_{\bH^{d-1}} 
	h((o,u_1),(y_2,u_2),\ldots,(y_d,u_d))
	e^{ - c' n r_n^{d} \pi_d( q((o,u_1),(y_2,u_2),
		\ldots,(y_d,u_d)))}
	\\
	d(y_2,\ldots,y_d)
	\mu_Y^d(d(u_1,\ldots,u_d)).
\end{align*}
This
tends to zero by dominated convergence
because $n r_n^{d} \to \infty$ by (\ref{e:rn}) and
because the 
indicator
function $ ({\bf u}, {\bf y}) \mapsto h((o,u_1),(y_2,u_2),\ldots,
(y_d,u_d)) $ is integrable 
and  is zero when $\pi_d(q((o,u_1),(y_2,u_2),\ldots,(y_d,u_d))) \leq 0$;
the integrability follows from Lemma \ref{l:inth}
and the fact that $\E[Y^{d-1}] < \infty$
since we assume $\E[Y^{\gamma}]<\infty$ for some $\gamma > d $.

Therefore the expression in \eqref{0512b2} tends to zero as $n \to \infty$,
and \eqref{e:0212c} follows.
\end{proof}


\textcolor{\blue}{We are now ready to state the second main result of
this subsection.}

	\begin{lemm}[\textcolor{\blue}{Coverage of a region
		just inside the boundary of a half-space}]
	\label{lemhalfdint}
		\textcolor{\blue}{
		For each $n >0$ let $\Omega_n \subset \R^{d-1}$
		be Riemann measurable, with $\cup_{n>0} \Omega_n$
		bounded.
Assume that \eqref{e:rn} holds for some $\beta \in \R$
or $\beta = + \infty$,
and that $ \limsup_{n \to \infty} ( n r_n^d/(\log n )) < \infty$. 
Also let $a_n \in (0,\infty)$
for each $n >0$,
		and assume for some $\eps >0$ 
		that $a_n = O(n^{(1/d) - \eps})$ as $n \to \infty$.
		Then
}
\begin{align}
	\lim_{n \to \infty}
	(\Pr[\{(\Omega_n \times \{0\} ) 
	\cup ( 
	&	(\partial \Omega_n)  
	\times [0,a_n r_n]) \subset   
	Z^o_n(\cU'_{n,\bH})  \}
	\nonumber \\
	& \setminus
	\{	( \Omega_n \times [0, a_n r_n] ) \subset Z_n( \cU'_{n,\bH})
	\}] ) = 0.
	\label{0517b2a}
\end{align}
\end{lemm}
\begin{proof}
For the duration of this proof only,
let $F_n$ be the event displayed in \eqref{0517b2a} but with
$\cU'_{n,\bH}$ changed to 
$\cU_{n,\bH}$ both times, i.e.
\begin{align*}
	F_n:=
	\{(\Omega_n \times \{0\} ) 
	\cup ( 
	(\partial \Omega_n)  
	\times [0,a_n r_n]) \subset   
	Z^o_n(\cU_{n,\bH})  \}
	\setminus
	\{	( \Omega_n \times [0, a_n r_n] ) \subset Z_n( \cU_{n,\bH})
	\}.
\end{align*}
By Lemma \ref{l:2PPs}, to prove \eqref{0517b2a} it suffices
to prove that $\Pr[F_n ] \to 0$ as $n \to \infty$.

Let $E_n$ be the (exceptional)
event that there exist $d$ distinct marked points $(x_1,s_1),\ldots,$ $
(x_d,s_d)$
of $\cU_{n,\bH}$
such that $ \cap_{i=1}^d \partial B(x_i, r_n s_i) $
has non-empty intersection with the hyperplane 
$\partial \bH$.
Then $\Pr[E_n]=0$.

Suppose that $F_n \setminus E_n$ occurs.
Let $w$ be a location  of minimal height (i.e., $d$-coordinate) 
in the closure of
$( \Omega_n \times [0,a_n r_n]) \setminus Z_n(\cU_{n,\bH})$.
Since we assume
$(\partial \Omega_n) 
\times [0,a_n r_n] \subset Z^o_n( \cU_{n,\bH})$ 
occurs, 
$w$ must lie in $\Omega_n^o \times
(0,a_nr_n]$. Also 
we claim that
$w$ must
be  a `corner' given by the
meeting point of the boundaries of exactly $d$ balls $B(x_1,r_ns_1),
\ldots, B(x_d,r_ns_d)$ with $\{(x_i,s_i)\}_{i=1}^d \subset  
\cU_{n,\bH} $,
and with $x_1$  the lowest
of the $d$ points $x_1,\ldots,x_d$,
and with
$\#(\cap_{i=1}^d \partial B(x_i,r_ns_i) 
) =2$, 
and $w \notin Z_n( \cU_{n,\bH} \setminus \{(x_1,s_1),\ldots,(x_d,s_d)\})$.

Indeed, if $w$ is not at the boundary of any such ball,
then for some $\delta >0$ we have $B(w, \delta) \cap 
Z_n(\cU_{n,\bH}) = \emptyset $,
and then we could find a location in $ (\Omega_n \times
[0,ar_n]) \setminus Z_n(\cU_{n,\bH})$ lower than $w$, a contradiction.
Next, suppose instead that $w$ lies at the boundary
of fewer than $d$ such balls.
Then denoting by $L$ the intersection of the supporting hyperplanes
at $w$ of each of these balls, we have that $L$ is an
affine subspace of $\R^d$, of
dimension at least 1. Take $\delta  >0$ small enough
so that $B(w,\delta)$ does not intersect any of the boundaries
of balls  centred at points of $\cU_{n,\bH}$ (viewed here
as a marked Poisson point process in $\R^d$) and with radius given by their 
marks times $r_n$,
other than those which meet at $w$.
Taking $w' \in L \cap B(w, \delta) \setminus \{w\}$ 
such that $w'$ is at least as low as $w$, we have
that $w'$ lies
in the interior of $Z_n(\cU_{n,\bH})^c$. Hence for some $\delta' >0$,
$B(w',\delta') \cap Z_n(\cU_{n,\bH}) = \emptyset$ and we can
find a location in $B(w',\delta')$ that is lower than $w$, 
yielding a contradiction for this case too. 
Finally, with probability 1 there is no set of
more than $d$ points of $\cU_{n,\bH}$ such that the boundaries
of the associated balls  have non-empty
intersection, so $w$ is not at the boundary of more than
$d$ such balls.
Thus we have justified the claim.

Moreover $w $ must be the point 
$ q_n(\bx_1,\ldots,\bx_d)$ rather
than $p_n(\bx_1,\ldots,\bx_d)$, where for
$1 \leq i \leq d$ we write $\bx_i$ for $(x_i,r_i)$,
because otherwise by extending the line segment from 
$ q_n(\bx_1,\ldots,\bx_d)$ to
$p_n(\bx_1,\ldots,\bx_d)$ slightly 
beyond  $p_n(\bx_1,\ldots,\bx_d)$ we could find  a  point in
$
(\Omega_n \times [0,a r_n])
\setminus Z_n(\cU_{n,\bH})$
lower than $w$, contradicting the statement that $w$ is
a location of minimal height in the closure  of
$(\Omega_n \times [0,a r_n]) \setminus Z_n(\cU_{n,\bH})$.
Moreover, $w$ must be strictly higher than $x_1$, since if
$\pi_d(w) \leq  \min(\pi_d(x_1), \ldots,\pi_d(x_d))$,
then locations  just below $w$ would lie
in $ (\Omega_n \times [0,a r_n]) \setminus Z_n(\cU_{n,\bH})$, contradicting
the statement that $w$ is
a  point of minimal height in the closure of $(\Omega_n \times 
[0,ar_n]) \setminus Z_n(\cU_{n,\bH})$.
Hence, $h_n(\bx_1,\ldots,\bx_d) = 1$, where $h_n(\cdot)$ was
defined at \eqref{e:hn}.

\textcolor{\blue}{Thus if  $F_n \setminus E_n$ occurs, then
$M_n(\Omega,a_n) \geq 1$, where we take $\Omega := \cup_{n >1} \Omega_n$,
and $M_n(\Omega,a)$ was defined just before Lemma \ref{l:corners}.
Hence by Markov's inequality, it suffices to show
that $\E[M_n(\Omega,a_n)] \to 0$ as $n\to \infty$.}


Choose $a \in [1,\infty)$ such that 
$$
\int_{(a,\infty)} x^d \mu_Y(dx) < \eps \E[Y^d]/4. 
$$
\textcolor{\blue}{Since $\E[M_n(\Omega,a')]$ is monotone nondecreasing in $a'$, we may
assume} without loss of generality that  $a_n > a$.
For $y \in \bH$ with $a r_n \leq \pi_d(y) \leq a_n r_n$, 
instead of \eqref{e:0212b} we use the estimate
\begin{align}
	\E[N_n(y)] & = n \int_{\R_+}  |B(y,r_ns) \cap \bH| \mu_Y(ds) 
	\nonumber \\
	& \geq n \int_{(0,a)} (\omega_d r_n^d s^d)\mu_Y(ds)
	\nonumber	\\
	& \geq n \omega_d r_n^d \E[Y^d] (1- \eps/4 ).
	\nonumber	
\end{align}
By this, and the Mecke formula,
there is a new constant $c$ such that
\begin{align}
	\E[M_n(\Omega,a_n) -M_n(\Omega,a)]
	\leq
	c n^d  (nr_n^d)^{k-1} \int_{\bH }  
	\cdots
	\int_{\bH }  \E [h_n((x_1,Y_1),\ldots,(x_d,Y_d))
	\nonumber \\
	\times
	{\bf 1}\{  q_n((x_1,Y_1),\ldots,(x_d,Y_d)) \in \Omega \times 
	[ar_n,a_nr_n] \}
	e^{- n \omega_d r_n^d \E[Y^d] (1- \eps/4)  
	}]
	dx_d \cdots dx_1
	.  ~~~
	\label{e:0212a}
\end{align}
Changing variables as before to $y_i= r_n^{-1}(x_i-x_1)$ for
$2 \leq i \leq d$, 
and using \eqref{rtd+},
we find  there is a constant $c'''$ such that
the expression  in the right hand side of \eqref{e:0212a} is at most
\begin{align}
	c''' n^{d+k-1} r_n^{d(d+k-2)}
	n^{- (2-2/d) + \eps/2} 
	\E \Big[
	\int_{\bH } \cdots \int_{\bH }
	h_n((o,Y_1),(r_ny_2,Y_2),\ldots,(r_ny_d,Y_d)) 
	\nonumber \\
	\times	{\bf 1}_{\Omega \times[ar_n,a_nr_n]} 
	( x_1 +  q_n((o,Y_1),(r_ny_2,Y_2),
	\ldots,(r_ny_d,Y_d)))   
	dx_1 dy_d \ldots dy_2 \Big]
	\nonumber \\
	\leq c''' (nr_n^d)^{d+k-2} n^{-1+(2/d)+ \eps/2}
	|\Omega|(a_n-a) r_n 
	\nonumber \\
	\times	\E \Big[ \int_{\bH^{d-1}}
	h((o,Y_1),(y_2,Y_2), \ldots,(y_d,Y_d))
	d ((y_2,\ldots,y_d)) \Big].
	\nonumber 
\end{align}
Since  $\E[Y^{d-1}]<\infty$ the
expectation in the last line is finite by Lemma \ref{l:inth},
and since  also $a_n =O(n^{(1/d)-\eps})$
we obtain that
$$
\E[M_n(\Omega,a_n) - M_n(\Omega,a)] = O( (nr_n^d)^{d+k-2+ 1/d} 
n^{-1+(2/d) + (\eps/2) -\eps}), 
$$
which tends to zero. Combined with \eqref{e:0212c} this
shows that $\E[M_n(\Omega,a_n)] \to 0$, 
\textcolor{\blue}{as required.}
\end{proof}

\subsection{{\bf Proof of Theorem \ref{th:general2d}}}

In this subsection, we
set $d=2$ and take
$A$ to be polygonal.
Denote the vertices of $A$ by $q_1,\ldots,q_\kappa$, and the angles subtended
at these vertices by $\alpha_1,\ldots,\alpha_\kappa$ respectively.

\textcolor{\blue}{To prove Theorem \ref{th:general2d}, we shall
split $A$ into an interior region, a region near the edges (but not
the corners) and a region near the corners of $A$. We shall use
Lemma \ref{lemHall} to determine the limiting probability of covering
the interior region, and the results from Subsection  \ref{ss:half-space}
to determine the limiting probability of covering the region near
the edges.
We shall provide a separate argument to show the probability that
the region near the corners is covered tends to 1.}

For the duration of this subsection (and the next) we
fix $\beta \in \R$.  Assume we are given real numbers
$(r_n)_{n >0}$ satisfying the case $d=2$ of \eqref{e:rn},
i.e.
\bea
\lim_{n \to \infty}
\left( \pi n r_n^2 \E[Y^2] -  \log n - (2k-1) \log \log n \right) = \beta.
\label{0220a}
\eea

Fix $\xxi  >0$.
For $n >0$, in this subsection we define the `corner regions' $Q_n$ and
$Q_n^-$ 
by  
$$
Q_n:= \cup_{j=1}^\kappa B(q_j,n^{3 \xxi}r_n) \cap A;
~~~~~
Q^-_n:= \cup_{j=1}^\kappa B(q_j, n^{2 \xxi} r_n) \cap A.
$$
Also we define $\cU_{n,A}$, $Z_n(\cU_{n,A})$ and 
$Z^o_n(\cU_{n,A})$ by \eqref{e:Zdef}, \eqref{e:occ} and \eqref{e:occint}.

\begin{lemm}[Coverage of regions near the corners of $A$]
\label{lemQ}
Provided $\xxi$ is taken to be small enough,
$\Pr[Q_n  \subset 
Z_n^o(\cU_{n,A})] \to 1$ as $n \to \infty$.
\end{lemm}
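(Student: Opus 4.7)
The plan is to cover $Q_n$ by a fine grid, use a union bound to show every grid point is $k$-covered with high probability (by balls whose radii have been slightly shrunk), and then pass from grid coverage to interior coverage of $Q_n$ by a standard shrinkage argument.

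Set $\alpha_{\min} := \min_{1 \leq j \leq \kappa} \alpha_j > 0$. Elementary planar geometry (comparing with a sector at the vertex of smallest interior angle) gives a constant $r_0 > 0$ such that $|B(y,r) \cap A| \geq (\alpha_{\min}/2) r^2$ for all $y \in A$ and all $0 < r \leq r_0$. Now fix $\eps_0 > 0$ small enough that $\Pr[Y > 2\eps_0] > 0$ (possible since $\E[Y^2] > 0$), and let $\mathcal{G}_n \subset Q_n$ be a grid of spacing $\eps_0 r_n$, so $\#\mathcal{G}_n = O(n^{6\xxi})$. For each $y \in \mathcal{G}_n$ define the Poisson random variable
\[
\tilde N_n(y) := \#\{(x,s) \in \cU_{n,A} : \|y - x\| \leq r_n (s - 2\eps_0)_+\}.
\]
Combining the geometric bound above (used for $r_n s \leq r_0$) with a tail-cutoff argument justified by $\E[Y^{2+\eps}] < \infty$ shows that, uniformly in $y \in Q_n$,
\[
\E[\tilde N_n(y)] \geq \bigl(\alpha_{\min}/(2\pi) - O(\eps_0)\bigr)(1 - o(1)) \, n \pi r_n^2 \E[Y^2],
\]
which by \eqref{0220a} is at least $(\alpha_{\min}/(2\pi) - O(\eps_0)) \log n + O(\log \log n)$. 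The Poisson tail bound then yields $\Pr[\tilde N_n(y) < k] \leq C_k (\log n)^{k-1} n^{-\alpha_{\min}/(2\pi) + O(\eps_0)}$. Taking a union bound over $\mathcal{G}_n$ gives
\[
\Pr\bigl[\exists y \in \mathcal{G}_n : \tilde N_n(y) < k\bigr] = O\bigl(n^{6\xxi - \alpha_{\min}/(2\pi) + O(\eps_0)} (\log n)^{k-1}\bigr),
\]
which tends to zero provided $\xxi < \alpha_{\min}/(12\pi)$ and $\eps_0$ is chosen small enough.

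For the grid-to-continuum step, suppose $\tilde N_n(y_i) \geq k$ for every $y_i \in \mathcal{G}_n$. Any $y \in Q_n$ lies within $\eps_0 r_n$ of some $y_i$, and for each of the $k$ witness points $(x_j,s_j) \in \cU_{n,A}$ of $\tilde N_n(y_i) \geq k$ we have $s_j > 2\eps_0$ and
\[
\|y - x_j\| \leq \eps_0 r_n + r_n(s_j - 2\eps_0) = r_n s_j - \eps_0 r_n,
\]
so $y$ lies strictly inside $B(x_j, r_n s_j)$. Thus $y$ has a neighbourhood contained in $Z_n(\cU_{n,A})$, i.e.\ $y \in Z_n^o(\cU_{n,A})$, and hence $Q_n \subset Z_n^o(\cU_{n,A})$ as required. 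The main delicate point is the geometric lower bound at the smallest-angle vertex: the constraint $\xxi < \alpha_{\min}/(12\pi)$ is what forces the ``small enough'' hypothesis on $\xxi$, and this must be made compatible with any other constraints on $\xxi$ arising elsewhere in the proofs of Theorems \ref{th:general2d} and \ref{th:generaldhi}.
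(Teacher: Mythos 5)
Your proof is correct and follows essentially the same approach as the paper: a grid of $O(n^{6\xxi})$ points covering $Q_n$, a union bound, a geometric lower bound on $|B(y,r)\cap A|$ near the corners to make each grid point's Poisson count have mean at least a constant times $\log n$, and a shrinkage step to pass from grid coverage to coverage of all of $Q_n$ by the interior region $Z_n^o(\cU_{n,A})$. The only difference is cosmetic: the paper first thins $\cU_{n,A}$ down to a Boolean model with deterministic ball radius $2\delta r_n$ before applying the grid-and-shrinkage argument, whereas you shrink the random radii directly via the count $\tilde N_n(y)$; both yield the required bound and the same need for $\xxi$ to be taken small relative to $\alpha_{\min}$.
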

\begin{proof}
Let $\delta >0$ with $\Pr[Y> 3 \delta] > \delta$.
Then the restricted SPBM $Z^o_n(\cU_{n,A})$ stochastically dominates
a Poisson Boolean model $Z'_n$  with closed balls of
deterministic
radius $2 \delta r_n$, centred on the points of a homogeneous
Poisson point process in $A$ of intensity $\delta n$.
Let $Z''_n $ be
a Poisson Boolean model with closed balls of deterministic
radius $\delta r_n$, centred on the points of a homogeneous
Poisson point process in $A$ of intensity $\delta n$.

There is a constant $K$, independent of $n$,
such that for all $n \geq 1$ there exist points $y_{n,1},\ldots,y_{n,\lfloor K
	n^{6 \xxi}\rfloor}
\in A$
such that
$Q_n  \subset \cup_{j=1}^{\lfloor Kn^{6 \xxi}\rfloor}
B(y_{n,j},\delta r_n)$.  Also there is a constant $a >0$ such that for all
$n \geq 1$ and all $y \in A$
we have 
$|B(y,\delta r_n) \cap A |  \geq a r_n^2$.
Then
\begin{align}
	\Pr[ Q_n  \setminus Z_n^o(\cU_{n,A}) \neq \emptyset ] 
	&
	\leq 
	\Pr[ Q_n \setminus Z'_n \neq \emptyset ] 
	\nonumber \\
	& \leq  \Pr[ \cup_{j=1}^{\lfloor K n^{6 \xxi} \rfloor} \{y_{n,j} \notin Z''_n\}]
	\nonumber \\
	& \leq   K n^{6 \xxi} k(n\pi r_n^2)^{k-1}
	\exp(- a \delta n r_n^2), 
	\label{0213a}
\end{align}
and since $(n r_n^2)/\log n \to 1/(\pi \E[Y^2])$ by (\ref{0220a}),
provided we take $\xxi < a \delta /(6  \pi \E[Y^2])$
the expression in \eqref{0213a} tends to zero, and the result follows.
\end{proof}

Recall from \eqref{e:Zdef} the definition of the point processes
$\cU'_{n,D}$ for $n > 0,  D \subset \R^d$.
\begin{lemm}[Coverage of regions near the edges of $A$]
\label{LemRpp}
Assume $\xxi < 1/d$. Then
\bea
\lim_{n \to \infty} (
\Pr[(\partial A \setminus Q^-_n) \subset
Z_n(\cU'_{n,A}) ])  =
\exp \Big(-  \Big( \frac{c_{2,k} \E[Y]}{(  \E[Y^2])^{1/2}}
\Big) |\partial A| 
e^{-\beta/2} \Big).
\label{0429a}
\eea
Also,
\begin{align}
	\lim_{n \to \infty} (
	\Pr[\{A^{(3 n^\xxi r_n)} \subset Z_n(\cU'_{n,A})\}
	\cap \{ (\partial A \cup Q_n ) \subset Z^o_n(\cU'_{n,A})\}
	\setminus \{A \subset Z_n(\cU'_{n,A})\}] )   = 0.
	\label{eqlem0220}
\end{align}
\end{lemm}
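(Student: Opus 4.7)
The plan is to prove the two displayed statements in turn, both via a localization argument that reduces the analysis near each edge of the polygon $A$ to the half-space coverage result, Lemma~\ref{lemhalfd}.

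For \eqref{0429a}, I decompose $\partial A \setminus Q_n^- = \bigsqcup_{j=1}^{\kappa} L_j^-$, where $L_j^- := L_j \setminus Q_n^-$ is the portion of edge $L_j$ left after removing the two corner cushions.  Since every ball of $\cU'_{n,A}$ has radius at most $n^\xxi r_n$, the event $\{L_j^- \subset Z_n(\cU'_{n,A})\}$ depends only on the restriction of $\cU'_{n,A}$ to the slab $N_j := (L_j^- \oplus B(o,n^\xxi r_n)) \cap A$.  Because the cushion radius $n^{2\xxi} r_n$ dominates $n^\xxi r_n$ for large $n$, the $\kappa$ slabs $N_j$ are pairwise disjoint, and the corresponding restrictions of $\cU'_{n,A}$ are independent Poisson processes; hence the $\kappa$ edge-coverage events are mutually independent.

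For each $j$, a rigid motion identifies $A$ locally with $\bH$ in a neighbourhood of $L_j^-$, and the corner cushion ensures that $B(x,n^\xxi r_n) \cap A = B(x,n^\xxi r_n) \cap \bH$ for every $x \in N_j$ (after the motion).  Thus the image of $\cU'_{n,A}|_{N_j}$ is in law the restriction of $\cU'_{n,\bH}$ to the corresponding slab, giving $\Pr[L_j^- \subset Z_n(\cU'_{n,A})] = \Pr[L_j^- \subset Z_n(\cU'_{n,\bH})]$.  By Lemma~\ref{l:2PPs} the latter has the same limit as $\Pr[L_j^- \subset Z_n(\cU_{n,\bH})]$, which by \eqref{0517c2a} (applied with $d=2$ and $\Omega = L_j^-$ viewed as a subset of $\R$) converges to $\exp(-c_{2,k}(\E[Y]/(\E[Y^2])^{1/2}) |L_j^-| e^{-\beta/2})$.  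Multiplying the $\kappa$ limits by independence and using $\sum_j |L_j^-| \to |\partial A|$ (since $n^{2\xxi} r_n \to 0$ for $\xxi$ small) yields \eqref{0429a}.

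For \eqref{eqlem0220}, observe that on the event in question any point of $A$ not covered by $Z_n(\cU'_{n,A})$ must lie in $A \setminus (A^{(3 n^\xxi r_n)} \cup Q_n \cup \partial A)$, which is contained in the union over $j$ of strips $S_j$ of width at most $3 n^\xxi r_n$ adjacent to $L_j^-$.  Localizing as in the previous step, each $S_j$ corresponds in the half-space picture to $L_j^- \times (0, 3n^\xxi r_n]$; its base $L_j^- \times \{0\}$ lies in $\partial A$ and its vertical sides $\partial L_j^- \times [0, 3n^\xxi r_n]$ sit within $O(n^{2\xxi} r_n)$ of a corner and so lie inside $Q_n$ (as $n^{3\xxi} r_n \gg n^{2\xxi} r_n$).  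Hence both the base and sides lie in $Z^o_n(\cU'_{n,A})$ by hypothesis, and \eqref{0517b2a} applied with $a_n = 3 n^\xxi = O(n^{(1/d)-\eps})$ (valid provided $\xxi < 1/d$) shows that the probability $S_j$ contains an uncovered point under these conditions tends to $0$.  A union bound over the $\kappa$ edges gives \eqref{eqlem0220}.

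The main obstacle is the geometric localization in both parts: verifying that, inside a slab of thickness $n^\xxi r_n$ around an edge at distance $\geq n^{2\xxi} r_n$ from each corner, the set $A$ is \emph{exactly} isometric to the half-space $\bH$ as far as the SPBM restricted to that slab is concerned, so that Lemma~\ref{lemhalfd} applies with no approximation error.  This is what dictates the $\xxi$-versus-$2\xxi$-versus-$3\xxi$ hierarchy in the definitions of $Q_n^-$, $Q_n$, and $S_j$, and forces $\xxi$ to be chosen as a small enough positive constant that all the scalings are simultaneously compatible.
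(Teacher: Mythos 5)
Your proposal is correct and follows essentially the same route as the paper: decompose $\partial A \setminus Q_n^-$ into the $\kappa$ truncated edges, use the $\Theta(n^{2\xxi} r_n)$ corner cushions to obtain disjoint $n^\xxi r_n$-neighbourhoods and hence mutual independence of the edge-coverage events, identify each neighbourhood isometrically with a slab in $\bH$, and invoke \eqref{0517c2a} together with Lemma~\ref{l:2PPs} for the first display and \eqref{0517b2a} with $a_n = 3n^\xxi$ for the second. The only point worth flagging is that you (correctly) note the need for $n^{2\xxi} r_n \to 0$ so that $\sum_j |L_j^-| \to |\partial A|$, which for $d=2$ actually forces $\xxi < 1/4$ rather than merely $\xxi < 1/2$; this is harmless because the global choice of $\xxi$ in the paper (via \eqref{eqgamma} and Lemma~\ref{lemQ}) is far smaller, but it is a constraint not captured by the stated hypothesis $\xxi < 1/d$ alone.
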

\begin{proof}
Denote the line segments making up $\partial A$ by $I_1,\ldots,I_{\kappa}$,
and for $ n >0$ and $1 \leq i \leq \kappa$ set $I_{n,i} := I_i \setminus
Q^-_n$.

Let $i,j,k \in \{1,\ldots,\kappa\}$ be such that
$i\neq j$ and the edges $I_i$ and $I_j$ are both incident
to $q_k$. 
Provided $n$ is large enough,
if $x \in I_{n,i}$ and
$y \in I_{n,j}$, then $\|x-y \| \ge 
(n^{2\zeta} r_n) \sin \alpha_k \geq 3 n^\zeta r_n$. 
Hence for all large enough $n$
the events $\{I_{n,1} \subset Z_n( \cU'_{n,A})\},
\ldots,\{I_{n,\kappa} \subset Z_n( \cU'_{n,A}) \}$
are mutually independent. Therefore
\bean
\Pr[ (\partial A \setminus Q^-_n)
\subset Z_n(\cU'_{n,A}) ] =
\prod_{i=1}^\kappa \Pr[ I_{n,i} \subset Z_n( \cU'_{n,A}) ],
\eean
and by Lemmas 
\ref{lemhalfd} and \ref{l:2PPs},
this converges to the right hand
side of (\ref{0429a}).

Now we prove (\ref{eqlem0220}).
For $n >0$, and
$i \in \{1,2,\ldots,  \kappa \} $, let $S_{n,i}$ denote the rectangular block
of dimensions $|I_{n,i} | \times 3 n^\xxi r_n$, consisting of all
points in $A$ at perpendicular distance at most $3 n^\xxi r_n$ from 
$I_{n,i}$.
Let $\partial_{{\rm side}}S_{n,i}$ denote the union of the two
`short' edges of $S_{n,i}$, i.e. the two edges 
bounding $S_{n,i}$ which are perpendicular to $I_{n,i}$.

Then for $n$ large, $A \setminus (A^{(3 n^\xxi r_n)} \cup Q_n) 
\subset \cup_{i=1}^\kappa S_{n,i}$,
and also
$\partial_{\rm side} S_{n,i}  \subset Q_n$
for $1 \leq i \leq \kappa$,
so that 
\begin{align*}
	\{ (A^{(3 n^\zeta r_n)} \cup \partial A ) \subset Z_n(\cU'_{n,A})\}
	\cap \{ Q_n
	\subset Z^o_n(\cU'_{n,A})\}
	\setminus \{ A \subset Z_n(\cU'_{n,A})  \} 
	~~~~~~~~~~~~~~~~~~~~~~~
	\\
	\subset \cup_{i=1}^\kappa 
	[\{I_{n,i} \subset Z_n(\cU'_{n,A} ) \}  
	\cup \{ 
	\partial_{{\rm side}} S_{n,i}  
	\subset Z_n^o(\cU'_{n,A} ) \}  
	\setminus \{S_{n,i} \subset Z_n(\cU'_{n,A})\}].
\end{align*}
For  $i \in \{1,\ldots,\kappa\}$, 
let $I'_{n,i}$ denote an interval of length $|I_{n,i}|$ in $\R$.
Using the Mapping theorem \cite[Theorem 5.1]{LP} and the translation and
rotation
invariance of Lebesgue measure one sees that
\begin{align*}
	\Pr[\{I_{n,i}  \subset Z_n( \cU'_{n,A})  \}  
	\cap \{\partial_{{\rm side}} S_{n,i} 
	\subset Z^o_n( \cU'_{n,A})  \}  
	\setminus \{S_{n,i} \subset Z_n( \cU'_{n,A})\}] 
	\nonumber \\
	=
	\Pr[ 
	\{(I'_{n,i} \times \{0\}) 
	\subset Z_n(\cU'_{n,\bH}) \}
	\cap \{ (\partial I'_{n,i})
	\times [0,3n^\zeta r_n] 
	\subset Z_n^o(\cU'_{n,\bH}) \}
	\nonumber \\
	\setminus 
	\{(I'_{n,i} \times [0,3n^\xxi r_n]) \subset Z_n(\cU'_{n,\bH})\} ] .
\end{align*}
By  Lemma \ref{lemhalfdint},
this probability tends to zero,
and hence
\eqref{eqlem0220}.
\end{proof}

\begin{proof}[Proof of Theorem \ref{th:general2d}]
Suppose $(r_n)_{n >0}$ satisfies 
(\ref{0220a}). 
We assert that provided $\xxi < 1/2$,
\bea
\lim_{n \to \infty} 
\Pr[ A^{(3 n^\xxi r_n)} \subset Z_n(\cU'_{n,A})
] =
\exp \Big( -  
{\bf 1}_{\{k=1\}}   \Big( \frac{(\E[Y])^2}{\E[Y^2]} \Big)|A| e^{-\beta} \Big). 
\label{0322a}
\eea
Indeed, setting $\lambda = n$ and $\delta (\lambda) =r_n$,
we have that if $k=1$ then \eqref{0315c} holds, while
if $k \geq 2$ then the left hand side of \eqref{0315c}
tends to $+\infty$. Also
$\Pr[ A^{(3 n^\xxi r_n)} \subset Z_n(\cU'_{n,A})]$ is  bounded
from below by
$\Pr[ A \subset Z_n(\cU'_{n,\R^2})]$, which
converges to the right hand side of \eqref{0322a}
by Lemmas \ref{lemHall} and \ref{l:2PPs}.
Also, if $k=1$, then
given $\eps >0$, for $n $ large 
the event
$\{ A^{(3 n^\xxi r_n)} \subset Z_n(\cU'_{n,A}) \}$ is contained in
$\{ A^{[\eps]} \subset Z_n(\cU'_{n,A})\}$,
so by Lemmas \ref{lemHall} and \ref{l:2PPs}  again
\begin{align*}
	\limsup_{n \to \infty}
	\Pr[ A^{(n^\xxi r_n)} \subset Z_n(\cU'_{n,A})] & \leq
	\limsup_{n \to \infty}
	\Pr[ A^{[\eps]} \subset Z_n(\cU'_{n,A})]
	\\
	& = \exp \Big( -  
	\Big( \frac{(\E[Y])^2}{\E[Y^2]}\Big)
	|A^{[\eps]}| e^{-\beta}\Big),
\end{align*}
and since $|A^{[\eps]} | \to |A|$
as $\eps \downarrow 0$, this gives us the assertion \eqref{0322a}.

Also, by (\ref{0429a}) and Lemma \ref{lemQ}, 
\bea
\lim_{n \to \infty} (
\Pr[\partial A \subset Z_n(\cU'_{n,A}) 
])  =
\exp \Big(- \Big( \frac{ c_{2,k} \E[Y] }{\sqrt{ \E[Y^2]} } \Big)
|\partial A|  e^{-\beta/2} \Big).
\label{0510a}
\eea

Note $\Pr[\{( Q_n \cup \partial A ) \subset Z_n(\cU'_{n,A})\}
\setminus
\{( Q_n \cup \partial A ) \subset Z^o_n(\cU'_{n,A})\}]=0$.
Therefore using (\ref{eqlem0220})  
followed by  Lemma  \ref{lemQ}, and then Lemma \ref{l:2PPs},
we obtain that
\begin{align}
	\lim_{n \to \infty} \Pr[ A \subset Z_n(\cU'_{n,A}) ]  
	& = \lim_{n \to \infty} \Pr[ (A^{(3 n^\xxi r_n)}  \cup \partial A \cup
	Q_n ) \subset Z_n(\cU'_{n,A}) ]  
	\nonumber \\
	& = \lim_{n \to \infty} \Pr[ (A^{(3 n^\xxi r_n)}  \cup \partial A)
	\subset Z_n(\cU'_{n,A}) ],
	\label{0510b}
\end{align}
provided these limits exist.

The events $\{\partial A \subset Z_n(\cU'_{n,A}) \} $  and
$\{A^{(3 n^\xxi r_n)} \subset Z_n(\cU'_{n,A})\} $ are
independent since the first of
these events is determined by the configuration of
Poisson points with projection onto $\R^2$
distant at most $n^\zeta r_n$ from $\partial A$,
while the second event is determined by the Poisson points 
with projection distant at least $2 n^\zeta r_n$ from $\partial A$. 
Therefore the limit in (\ref{0510b}) does
indeed exist, and is the product of the limits arising  in \eqref{0322a}
and \eqref{0510a}. By Lemma \ref{l:2PPs} we get the same limit
for $\Pr[A \subset Z_n(\cU_{n,A})]$ as for
$\Pr[A \subset Z_n(\cU'_{n,A})]$.
This  
gives us
\eqref{0128a}.
\end{proof}


\subsection{{\bf First steps toward proving Theorem \ref{th:generaldhi}}}

\label{secfirststeps}

In this subsection we assume that $d \geq 2$
and  $ \partial A \in C^2$.
Let $k \in \N, \beta \in \R $, and
assume   that $(r_n)_{n >0}$ satisfies  
\eqref{e:rn} and hence also \eqref{e:trho} and \eqref{rtd+}.

\textcolor{\blue}{We shall now carry out the strategy the polytopal
approximation strategy that was outlined in Subsection \ref{ss:strategy}.
We shall approximate $A$ by a polytopal set $\Gamma_n$
with faces of width $O(n^{9 \zeta} r_n)$, where 
$\zeta >0$ is a fixed small positive constant.
Thus the faces of $\Gamma_n$ will be small on a macroscopic scale,
but large compared to $r_n$. We shall describe how to reassemble
our Poisson process in regions near the faces of $\Gamma_n$ to get a
Poisson process in the half-space $\bH$, so that we can
apply the results from subsection \ref{ss:half-space}.}

Given any 
$n >0$, Borel
$D \subset \R^d$,
and any point process $\X$ in $\R^d \times \R_+$, 
define $F_{n}(D,\X)$ to be
the event that
$D$ is  `fully $k$-covered' by  the SPBM
based on $\X$ 
with radii scaled by
$r_n$, that is,
\begin{align}
F_{n}(D,\X) := \{D \subset Z_n(\X ) \},  
\label{Ftdef01}
\end{align}
where $Z_n(\cdot)$ was defined at \eqref{e:occ}.
Also, given  $r>0$,
define the `covering number'
\bea
\kappa(D,r): = \min \{m  \in \N: \exists x_1,\ldots,x_m \in D
~{\rm with} ~ D \subset \cup_{i=1}^m B(x_i,r)
\}.
\label{covnumdef}
\eea

Given $x \in \partial A$ we can
express $\partial A$
locally in a neighbourhood of $x$,
after a rotation, as the graph of a $C^2$ function from
$\R^{d-1}$ to $\R$ with all of its partial derivatives taking the value
zero at $x$.
We shall approximate to that function
by the graph of a piecewise affine function 
(in $d=2$, a piecewise linear function).

For each $x \in \partial A$, we can find an open neighbourhood
$\NN_x$ of $x$, a number $r(x) >0$ such that
$B(x, 3r(x)) \subset \NN_x$ and  a rotation $\theta_x$ about $x$
such that
$\theta_x(\partial A \cap \NN_x)$ is the graph of
a real-valued $C^2$ function $f$ defined on an open ball 
$D \subset \R^{d-1}$, with
$\langle f'(x),e\rangle =0 $ and
$\langle f'(u),e\rangle \leq 1/9$ for
all $u \in D$ and
all unit vectors $e$ in $\R^{d-1}$, where $\langle \cdot,\cdot \rangle$
denotes the Euclidean inner product 
in $\R^{d-1}$ and $f'(u):= (\partial_1 f(u),\partial_2f(u), \ldots, \partial_{d-1} f(u) )$ is the derivative of $f$ at $u$.
Moreover, by taking a smaller neighbourhood if necessary, 
we can also assume that there exist $\eps>0$ and
$a  \in \R$ such that 
$f(u) \in [a+ \eps,a+ 2 \eps]$ for all $u \in D$
and also  $\theta_x(A) \cap (D \times [a,a + 3 \eps])
= \{(u,z): u \in D, a \leq z \leq f(u)\}$.

By a compactness argument, we can and do take a finite collection
of points $x_1,\ldots, x_J \in \partial A$ such that
\bea
\partial A \subset  \cup_{j=1}^J B(x_j,r(x_j)). 
\label{bycompactness}
\eea
Then there are  constants $\eps_j >0$, and
rigid motions $\theta_j, 1 \leq j \leq J$, 
such that for each $j$ 
the set $\theta_j(\partial A \cap \NN_{x_j}) $ is
the graph of a $C^2 $ function $f_j$ 
defined on a ball $I_j$ in $\R^{d-1}$, with
$\langle f'_j(u), e \rangle \leq 1/9$ for all $u \in I_j$ and
all unit vectors
$e \in \R^{d-1}$, and also with $\eps_j \leq f_j(u) \leq 2 \eps_j $
for all $u \in I_j$ and $\theta_j(A) \cap (I_j \times [0,3\eps_j]) =
\{(u,z):u \in I_j, 0 \leq z \leq f(u)\}$.

Later we shall refer to each tuple
$(x_j,r(x_j),\theta_j, f_j)$, $1 \leq j \leq J$, as 
a {\em chart}. More loosely we shall also refer to each  set
$B(x_j, r(x_j))$ as a chart.

Let $\Gamma \subset \partial A$ be
a closed set such that $\Gamma \subset  B(x_j,r(x_j))$
for some $j \in \{1,\ldots,J\}$, and such that
$\kappa(\partial \Gamma,r) = O(r^{2-d})$ as $r \downarrow 0$,
where
we set
$\partial \Gamma: = \Gamma \cap \overline{\partial A \setminus \Gamma} $,
the boundary of $\Gamma$ relative to
$\partial A$.
To simplify notation we shall assume that 
$\Gamma \subset B(x_1,r(x_1))$, and moreover that $\theta_1$ is the identity map. 
Then $\Gamma= \{(u,f_1(u)): u \in U\}$ for some bounded set $U \subset \R^{d-1}$.
Also,
writing $\phi(\cdot)$ for $f_1(\cdot)$ from now on, we assume
\bea
\phi(U) \subset [\eps_1,2 \eps_1]
\label{0901b}
\eea
and
\bea
A \cap (U \times [0,3 \eps_1]) = \{(u,z): u \in U, 0 \leq z \leq \phi(u) \}.
\label{0901a}
\eea

Note that for any $u,v \in U$, by the Mean
Value theorem we have for some $w \in [ u,v]$ that
\bea
|\phi(v) - \phi(u) | = | \langle v-u, \phi'(w)\rangle | \leq
(1/9) \|v-u\|.
\label{philip}
\eea

Choose (and keep fixed for the rest of this paper) a constant 
$\xxi$ 
with
\bea
0 < \xxi <
1/(198d(18+d)).
\label{eqgamma}
\eea
Henceforth
we shall use this choice of $\xxi$ in the definition of
$\cU'_{n,D}$ at \eqref{e:U'}. We shall use $n^{c\xxi} r_n$,
for various choices of constant $c$, to provide various different
length scales.

We shall approximate to $\Gamma$ by a polytopal 
surface $\Gamma_n$ with
face diameters
that are $\Theta(n^{9 \xxi} r_n)$,  taking all the faces of
$\Gamma_n$ to be $(d-1)$-dimensional simplices.
Later we shall fit together a finite number of surfaces like $\Gamma$ to 
make up the whole of $\partial A$.

For the polytopal approximation,
divide $\R^{d-1}$ into 
hypercubes of dimension $d-1$ and side $n^{9 \xxi} r_n$,
and divide each of these hypercubes
into $(d-1)!$ simplices (we take these simplices to be closed).
Let $U_n$ be the union  of all those
simplices in the resulting tessellation of 
$\R^{d-1}$ into simplices,  that are
contained within $U$,
and let $U_n^-$ be the
union of those simplices in the tessellation which are contained
within $U^{(n^{10 \xxi} r_n)}$,
where for $r>0$ we
set $U^{(r)}$ 
to be the set of $x \in U$ 
at a Euclidean distance 
more than $r$ from $\R^{d-1} \setminus U$.
If $d=2$, the simplices
are just intervals. See Figure \ref{f:Un}.

\begin{figure}[!h]
	\label{fig0}
	\center
	\includegraphics[width=8cm]{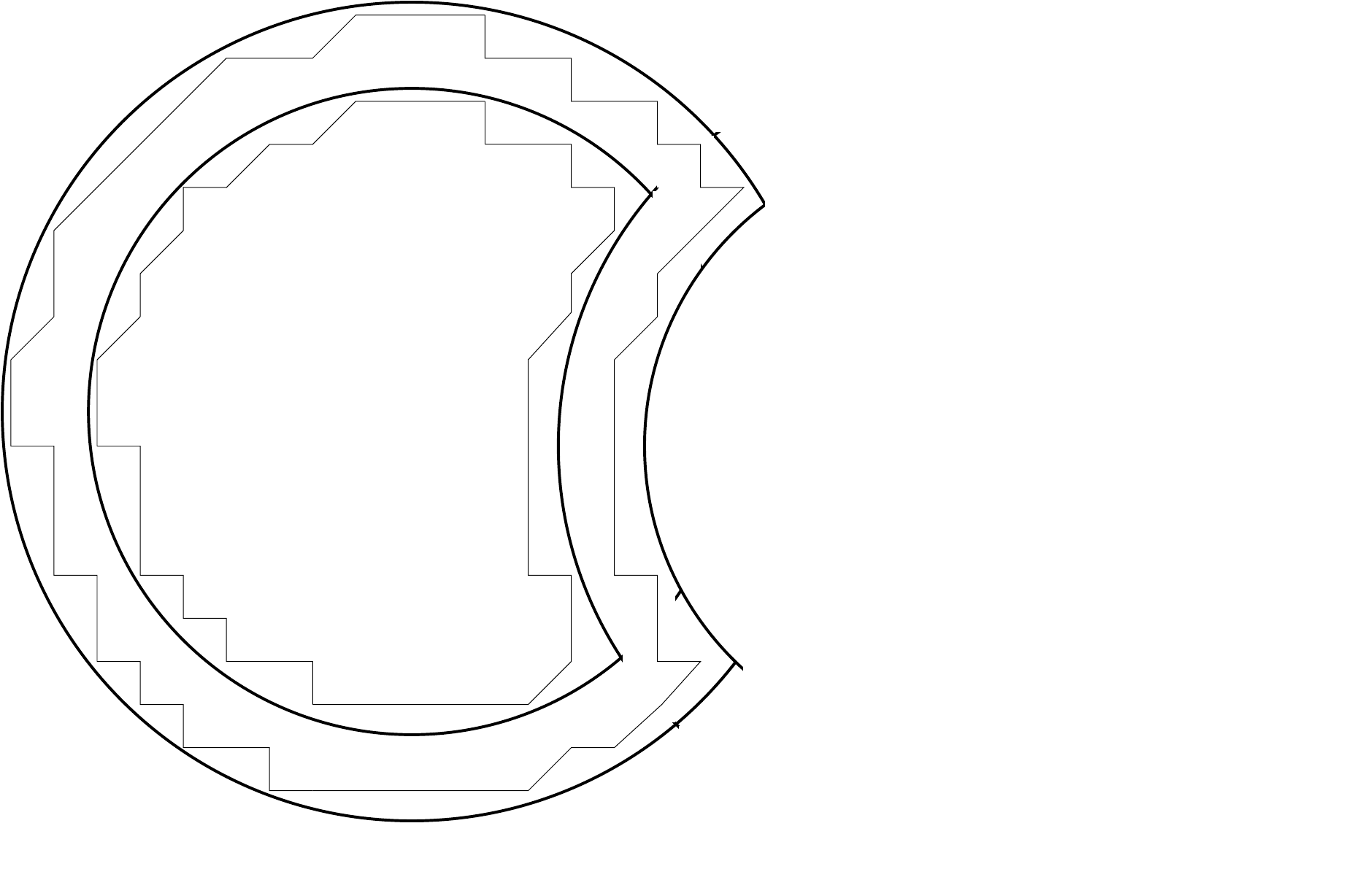} 
	\caption{\label{f:Un} Example in $d=3$.
		The outer crescent-shaped region is $U$,
		while the inner crescent is $U^{(10 \xxi r_n)}$
		The outer polygon is $U_n$, while
		the inner polygon is $U_n^-$
	}
\end{figure}

Let $\sigma^-(n)$
denote the number of simplices making up
$U_n^-$.
Choose $n_0 >0$ such that $\sigma^-(n) >0$ for all $n \geq n_0$.

Let  $\psi_n: U_n \to \R$ be the function that is
affine on each of the simplices
making up $U_n$, and agrees with the function $\phi$ on each
of the vertices of these simplices.
\textcolor{\blue}{
Such a function exists
because, if one such simplex has vertices
labelled $v_1,v_2,\ldots,v_d$ say, then $v_2-v_1, \ldots, v_d- v_1$
are linearly independent in $\R^{d-1}$ (in general it would not exist
if we used cubes instead of simplices because a cube would have too many vertices)}.
Our approximating  surface (or polygonal line if $d=2$)
will be defined by
$\Gamma_n := \{(u, \psi_n(u)-K n^{18 \xxi}r_n^2): u \in U^-_n\}$,
as depicted in Figure~\ref{f:Tri} for the case $d=3$, with 
the constant $K$ given by the following lemma.
This lemma uses Taylor expansion to show that $\psi_n$
a good approximation to $\phi$. 

\begin{lemm}[Polytopal approximation]
	\label{lemtaylor3}
	Set 
	$K :=
	\sup_{n \geq n_0, u \in U_n} ( (n^{9 \xxi} r_n)^{-2} 
	|\phi(u) - \psi_n(u) |)$.
	Then $K < \infty$. 
\end{lemm}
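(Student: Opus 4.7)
The plan is a straightforward second-order Taylor expansion, making crucial use of the fact that $\phi$ is $C^2$ on a domain strictly containing the closure of $U$, together with the elementary observation that on each simplex the affine interpolant agrees with the target function at the vertices.

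First I would observe that $\overline{U}$ is compact and contained in the open ball $I_1$ (the domain of $\phi = f_1$), because by construction $\Gamma \subset B(x_1, r(x_1))$ and $B(x_1, 3r(x_1)) \subset \NN_{x_1}$, so the projection of $\Gamma$ to $\R^{d-1}$ lies well inside $I_1$. Since $\phi$ is $C^2$ on $I_1$, the entries of the Hessian $\phi''$ are continuous on $\overline{U}$, hence bounded there by some constant $M < \infty$ depending only on $\phi$ and $U$.

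Next, fix $n \geq n_0$ and $u \in U_n$, and let $S$ be a simplex of the tessellation containing $u$, with vertices $v_0, v_1, \ldots, v_{d-1}$. Writing $u$ in barycentric coordinates as $u = \sum_{i=0}^{d-1} \lambda_i v_i$ with $\lambda_i \geq 0$ and $\sum_i \lambda_i = 1$, the definition of $\psi_n$ as the affine interpolant of $\phi$ at the vertices gives $\psi_n(u) = \sum_{i=0}^{d-1} \lambda_i \phi(v_i)$. Since $S \subset U \subset I_1$ and $S$ is convex, the segment $[u,v_i]$ lies in $U$, so Taylor's theorem with remainder applies:
\begin{equation*}
\phi(v_i) = \phi(u) + \langle v_i - u, \phi'(u) \rangle + R_i, \qquad |R_i| \leq (M/2) \|v_i - u\|^2.
\end{equation*}
Multiplying by $\lambda_i$ and summing, the zeroth-order terms give $\phi(u)$ (since $\sum_i \lambda_i = 1$), the first-order terms cancel (since $\sum_i \lambda_i (v_i - u) = 0$), and we are left with
\begin{equation*}
|\psi_n(u) - \phi(u)| \leq \sum_{i=0}^{d-1} \lambda_i |R_i| \leq (M/2) \max_{0 \leq i \leq d-1} \|v_i - u\|^2 \leq (M/2) (\mathrm{diam}\, S)^2.
\end{equation*}

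Finally, each simplex $S$ is contained in a hypercube of side $n^{9 \xxi} r_n$, so $\mathrm{diam}\, S \leq \sqrt{d-1}\, n^{9 \xxi} r_n$. Thus $(n^{9 \xxi} r_n)^{-2} |\phi(u) - \psi_n(u)| \leq M(d-1)/2$ uniformly in $n \geq n_0$ and $u \in U_n$, yielding $K \leq M(d-1)/2 < \infty$, as required. There is no real obstacle here; the only point to be careful about is the preliminary observation that $\phi''$ is bounded on $\overline{U}$, which is where the choice of $U$ strictly inside the chart $I_1$ is used.
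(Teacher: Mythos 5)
Your proof is correct and uses the standard second-order Taylor expansion argument, which is the natural approach. The paper itself does not give a proof here: it simply cites \cite[Lemma 7.18]{P23} (with a notational translation from the length scale $t^{-\gamma}$ there to $n^{9\xxi}r_n$ here), so your self-contained argument via barycentric coordinates and the vanishing of the first-order terms is a perfectly good substitute for that reference and almost certainly matches it in substance. The one point worth highlighting, which you do address, is that $\overline{U}$ must sit at positive distance from the boundary of the chart domain $I_1$ so that the Hessian of $\phi=f_1$ is uniformly bounded on $\overline{U}$; this follows from $\Gamma\subset B(x_1,r(x_1))$ together with $B(x_1,3r(x_1))\subset\NN_{x_1}$.
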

\begin{proof}
	See \cite[Lemma 7.18]{P23}. The notation $t$ there is equivalent
	to $n$ here. the length scale of $n^{9\xi} r_n$ here plays the
	role of $t^{-\gamma}$ there.
\end{proof}
We now subtract a constant from $\psi_n$ to obtain a
piecewise affine function
$\phi_n$ that approximates $\phi$ from below.
For $n \geq n_0$ and $u \in U_n$, define $\phi_n(u) := \psi_n(u) - 
K n^{18 \xxi}r_n^2$, with $K$ given by  
Lemma \ref{lemtaylor3}. Then
for all $n \geq n_0, u \in U_n$ we have 
$|\psi_n(u)-\phi(u)|\leq Kn^{18 \xxi} r_n^2$ so that
\bea 
\phi_n(u) \leq  \phi(u) \leq \phi_n(u) + 2K n^{18 \xxi} r_n^2.
\label{0126a}
\eea
Define the set $\Gamma_n: =
\{(u,\phi_n(u)): u \in U_n^-\}$.
We refer to each $(d-1)$-dimensional face of $\Gamma_n$
(given by the graph of $\phi_n$ restricted to one
of the simplices in our triangulation of $\R^{d-1}$)
as simply a {\em face} of $\Gamma_n$.
Denote these faces
of $\Gamma_n$ by $H_{n,1}, \ldots, H_{n,\sigma^-(n)}$.
The number of faces, $\sigma^-(n)$,  
is $\Theta((n^{9 \xxi} r_n)^{1-d})$ as $n \to \infty$.
The perimeter (i.e., the $(d-2)$-dimensional Hausdorff measure of the boundary)
of each individual face is $ \Theta((n^{9 \xxi} r_n)^{d-2})$.
For $1 \leq i \leq \sigma^-(n)$, let $\partial_{d-2}H_{n,i}$
denote the boundary  of $H_{n,i}$, which is the image under
the mapping $u \mapsto (u,\phi_n(u))$,
of the boundary  of the  simplex in $\R^{d-1}$
that is the pre-image of $H_{n,i}$ under that mapping.

\begin{figure}[h]
	\center
	\begin{tikzpicture}
		\node at (0,0) {\includegraphics[width=6cm,trim=80 40 70 35, clip]{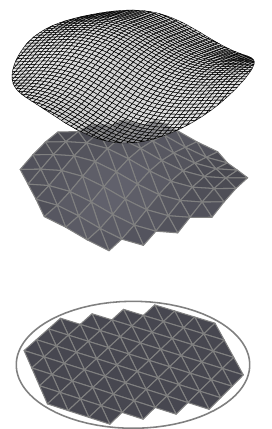}};
		\draw[<->] (-2.9,1) -- (-2.9,3) node [midway, left] {$K n^{18 \xxi}r_n^2$};
		\draw[->] (0,-1.8) -- (0,-0.8);
	\end{tikzpicture}
	\caption{\label{f:Tri}
		\textcolor{\blue}{We show here
		the set $\Gamma_n$ below a portion $\Gamma$ of the boundary of $A$,
		when $d=3$.
		The bottom part of the diagram shows the simplices (triangles) making up $U_n^-$,
		which are used to construct the triangulated surface above it.
		The faces of the triangulated surface $\Gamma_n$
		are the $H_{n,i}$.}
	}
\end{figure}

For $n \geq n_0$,
define subsets $A_n,A_n^-,
\tA_n,
A_n^{*},
\tA_n^*$
of
$ \R^d$ (illustrated in Figure \ref{f:Ans}) by
\bea
A_n := \{(u,z): u \in U_n, 0
\leq z \leq \phi(u)\}, 
~~~
\tA_n := \{(u,z): u \in U_n, 
0 \leq z \leq \phi_n(u)\},
\nonumber \\
A_n^- := \{(u,z): u \in U_n^-, 
0 \leq z \leq \phi(u)\}, 
~~~~~ ~~~~~ ~~~~~
~~~~~
~~~~~
~~~~~
~~~~~
~
\nonumber \\
A_n^* := \{(u,z): u \in U_n^-, \phi_n(u) - (3/2) 
n^\xxi r_n \leq z \leq \phi(u)\},
~~~~~ 
~~~~~ 
~~~~~ 
~~
~~
\nonumber \\
\tA_n^* := \{(u,z): u \in U_n^-, \phi_n(u) - (3/2) n^\xxi r_n \leq z
\leq \phi_n(u)\}.
~~~~~~~~ ~~~~~~~~
~
\label{Pdtdef}
\eea
Thus $A_n$ is a `thick slice' of $A$  near the boundary region $\Gamma$,
$\tA_n$ is an approximating region
having $\Gamma_n$ as  its upper boundary,
and $A_n^{*}$,
$\tA_n^*$
are `thin slices' of $A$ also 
having $\Gamma$, respectively $\Gamma_n$, as upper boundary.
By (\ref{0126a}), (\ref{0901b}) and  (\ref{0901a}),
$\tA_n^* \subset A_n^* \subset A_n^- \subset A_n \subset A$,
and $\tA_n^* \subset \tA_n \subset A_n$.

\begin{figure}[!h]
	\center
	\begin{tikzpicture}
		\node at (0,0) {\includegraphics[width=8cm]{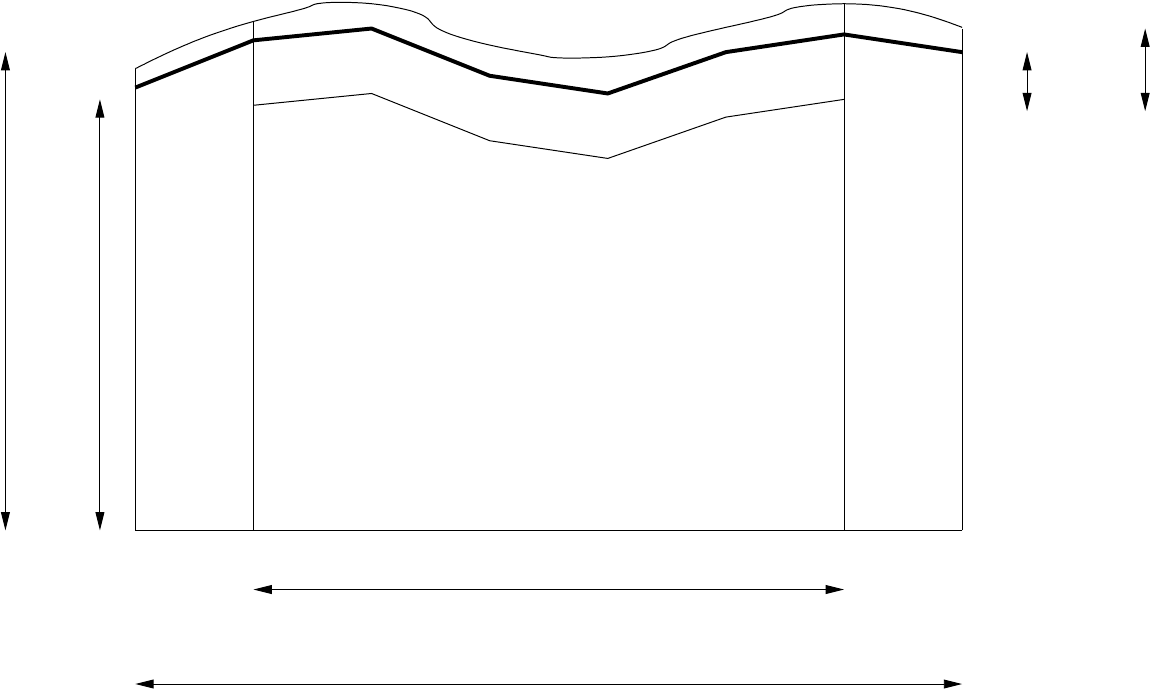}}; 
		\node at (-4.45,0.5) {\scriptsize $A_n, A_n^-$};
		\node at (-3.55,0.5) {\scriptsize $W_n$};
		\node at (3.45,1.8) {\scriptsize $W_n^*$};
		\node at (4.2,1.85) {\scriptsize $A_n^*$};
		\node at (0.6,-1.5) {\scriptsize $A_n^-, A_n^*, W_n^*$};
		\node at (-0.5,-2.2) {\scriptsize $A_n, W_n$};
	\end{tikzpicture}
	\caption{\label{f:Ans} 
		When $d=2$ the sets
		$A_n, \tA_n, A_n^-, A_n^*, \tA_n^*$
		are approximately rectangular but
		with a curved upper boundary for $A_n, A_n^-$ and $ A_n^*$,
		a polygonal upper boundary for $\tA_n^*$ and $\tA_n$,
		and a polygonal lower boundary for $A_n^*$ and $\tA_n^*$.
		\textcolor{\blue}{
		The `faces' $H_{n,i}$ are just line segments since $d=2$,
		and are the segments of the bold polygonal line.}
	}
\end{figure}

The rest of this subsection, and the next subsection,
are devoted to proving the following
intermediate step towards a proof of Theorem \ref{th:generaldhi}.
Recall the definition of $c_{d,k}$ at \eqref{cd1def}, and define
\begin{align}
	c_{d,k,Y} := \frac{c_{d,k} (\E[Y^{d-1}])^{d-1}}{(\E[Y^d])^{d-2+1/d}}.
	\label{e:cdY}
\end{align}

\begin{prop}[Limiting coverage probability for approximating
	polytopal shell]
	\label{lemsurf}
	It is the case that
	$
	\lim_{n \to \infty} \Pr[F_{n}(\tA_n^*, \cU'_{n,\tA_{n}}) ] =
	\exp (-  c_{d,k,Y} |\Gamma|  e^{-  \beta/2} ), 
	$
	where $|\Gamma|$ denotes the $(d-1)$-dimensional
	Hausdorff measure of $\Gamma$.
\end{prop}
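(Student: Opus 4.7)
The plan is to exploit the length-scale separation between the face diameter $\asymp n^{9\xxi} r_n$ of $\Gamma_n$ and the maximum ball range $\leq n^\xxi r_n$ permitted in $\cU'_{n,\tA_n}$ (the latter coming from the mark truncation $s \leq n^\xxi$). This separation enables a face-by-face analysis via Lemma \ref{lemhalfd}, with approximately independent coverage events.

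For each $(d-1)$-dimensional face $H_{n,i}$ of $\Gamma_n$, a suitable rigid motion sends $H_{n,i}$ to a simplex $\Omega_{n,i} \times \{0\} \subset \partial\bH$ and the prismatic slice of $\tA_n^*$ above $H_{n,i}$ to $\Omega_{n,i} \times [0,\tfrac{3}{2} n^\xxi r_n]$. Since the thickness of $\tA_n$ beneath $H_{n,i}$ is at least $\eps_1/2$ by \eqref{0126a} and \eqref{0901b}, which far exceeds $n^\xxi r_n$, the truncated process $\cU'_{n,\tA_n}$ agrees locally in distribution with $\cU'_{n,\bH}$. With $a_n = \tfrac{3}{2} n^\xxi$ (admissible since $\xxi < 1/d$), Lemma \ref{lemhalfd}\eqref{0517c2a} gives that $H_{n,i}$ is covered with probability tending to $\exp(-c_{d,k,Y} |H_{n,i}| e^{-\beta/2})$, while \eqref{0517b2a} reduces coverage of the prismatic slice to coverage of $H_{n,i}$ in the closed sense plus coverage of the lateral strip $\partial_{d-2} H_{n,i} \times [0,\tfrac{3}{2} n^\xxi r_n]$ in the open sense. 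To combine the face probabilities into a product, I would shrink each face to $H_{n,i}^- := \{x \in H_{n,i} : \dist(x,\partial_{d-2} H_{n,i}) \geq 2 n^\xxi r_n\}$; the events $\{H_{n,i}^- \subset Z_n(\cU'_{n,\tA_n})\}$ then depend on pairwise disjoint regions of $\cU'_{n,\tA_n}$ (all balls reaching $H_{n,i}^-$ have centres in its $n^\xxi r_n$-neighbourhood), so they are mutually independent, giving
\[
\prod_i \Pr[H_{n,i}^- \subset Z_n(\cU'_{n,\tA_n})] \longrightarrow \exp(-c_{d,k,Y} |\Gamma| e^{-\beta/2}),
\]
using $\sum_i |H_{n,i}\setminus H_{n,i}^-| = O(n^{-8\xxi}) \to 0$ and $|\Gamma_n| \to |\Gamma|$ (from Lemma \ref{lemtaylor3}).

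The main obstacle is that the above product handles only the shrunken parts $H_{n,i}^-$, whereas full coverage of $\tA_n^*$ additionally requires that the narrow strips $\bigcup_i(H_{n,i} \setminus H_{n,i}^-)$ and the lateral strips $\partial_{d-2} H_{n,i} \times [0,\tfrac{3}{2} n^\xxi r_n]$ be covered. For this I would run a Mecke first-moment bound parallel to \eqref{0522a3} for the expected number of exposed $d$-tuple corners of the $k$-covered region lying in these leftover regions. The total $(d-1)$-dimensional area of the leftover region is $O(n^{-8\xxi})$, which beats the Poisson intensity factor $n^{d+k-1} r_n^{d(d+k-2)} \exp(-n\omega_d r_n^d \E[Y^d])$ appearing in the Mecke calculation, so the expectation, and hence the probability of any leftover uncovered point, tends to $0$. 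Combining this with the product estimate yields the stated limit.
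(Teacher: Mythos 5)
The paper's proof of Proposition \ref{lemsurf} proceeds by an "induced coverage process": it tiles each face of $\Gamma_n$ by small $(d-1)$-dimensional hypercubes of side $n^{3\xxi}r_n$, attaches to each a $d$-dimensional block $S_{n,i}^+ \subset \tA_n$, and \emph{reassembles} these blocks (together with their Poisson points) via rigid motions into a single macroscopic flat slab $S_n^+ = D_n \times [0, n^{3\xxi}r_n]$ with $|D_n| \to |\Gamma|$. The reassembled point process is again a Poisson process on a half-space, and Lemma \ref{lemhalfd} is then applied \emph{once}, with a non-shrinking set $\Omega = L_n$, to the slab (Lemma \ref{leminduced}); the discrepancy with coverage of $\tA_n^*$ is absorbed by the exceptional-region Lemmas \ref{lemcorn3}, \ref{lemtartan}, \ref{lemtQ3}.

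Your approach is genuinely different, but it has a gap at its core. You propose applying Lemma \ref{lemhalfd}\eqref{0517c2a} to each individual face $H_{n,i}$ and multiplying the resulting probabilities. However, \eqref{0517c2a} is a \emph{pointwise limit for a fixed set $\Omega$}: it says $\Pr[\Omega \times \{0\} \subset Z_n(\cU_{n,\bH})] \to \exp(-c_{d,k,Y}|\Omega|e^{-\beta/2})$ as $n \to \infty$ with $\Omega$ held fixed. It does \emph{not} give a uniform or quantitative asymptotic $\Pr[\Omega_n \times \{0\} \subset Z_n(\cU_{n,\bH})] = \exp(-c_{d,k,Y}|\Omega_n|e^{-\beta/2}(1 + o(1)))$ valid for a family of sets $\Omega_n$ shrinking at rate $n^{9\xxi}r_n$. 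Each face $H_{n,i}^-$ has $|H_{n,i}^-| \to 0$, so \eqref{0517c2a} applied face-by-face merely says each factor tends to $1$; you are then taking a product over $\Theta((n^{9\xxi}r_n)^{1-d}) \to \infty$ such factors. Asserting that this product converges to $\exp(-c_{d,k,Y}|\Gamma|e^{-\beta/2})$ requires precisely the kind of second-order control (a $|\Omega_n|$-dependent rate with explicit error) that Lemma \ref{lemhalfd} does not supply, and deriving such a rate would be a substantial additional argument — effectively reproving the boundary version of Janson's coverage asymptotics in a quantitative form. The paper's reassembly device is designed exactly to circumvent this: by gluing the small blocks into one macroscopic slab with the Poisson points transported along, it reduces the whole problem to a single application of \eqref{0517c2a} with a non-degenerate limit set, and no product over a growing number of vanishing factors is ever needed. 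Your remaining ingredients (independence via $n^\xxi r_n$-separation from the face boundaries, and a Mecke first-moment bound for the leftover strips) are reasonable in spirit and parallel what the paper does for the exceptional regions, but they do not repair the central $1^\infty$ issue.
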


The following corollary of Lemma \ref{lemtaylor3}
is a first step towards proving this.

\begin{lemm}
	\label{corotaylor}
	(a) It is the case that
	$|A_n \setminus \tA_n| = O(n^{18 \xxi}r_n^2)$ as  
	$n \to \infty$.
	
	(b)
	Let $K$ be as given in Lemma \ref{lemtaylor3}, and
	fix $L>0$. Then
	for all $n \geq n_0$ and 
	$x \in U_n^{(Lr_n)} \times \R$,
	$|B(x,L r_n) \cap A_n \setminus \tA_n| \leq 2K \omega_{d-1}
	L^{d-1}
	n^{18 \xxi} r_n^{d+1}$. 
\end{lemm}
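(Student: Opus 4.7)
The plan is straightforward: both parts follow by Fubini's theorem, using the uniform height bound \eqref{0126a} derived from Lemma~\ref{lemtaylor3}. The key observation is that by the definitions of $A_n$ and $\tA_n$, together with the inequality $\phi_n(u) \leq \phi(u)$ on $U_n$, we have
\begin{align*}
A_n \setminus \tA_n = \{(u,z) : u \in U_n, \ \phi_n(u) < z \leq \phi(u)\},
\end{align*}
so that slicing horizontally yields $|A_n \setminus \tA_n| = \int_{U_n} (\phi(u) - \phi_n(u))\, du$, and similarly the intersection with a ball can be controlled via a projection onto $\R^{d-1}$.

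For part (a), I would apply Fubini as above and then invoke \eqref{0126a}, which gives $\phi(u) - \phi_n(u) \leq 2 K n^{18\xxi} r_n^2$ uniformly for $u \in U_n$. Since $U_n \subset U$ and $U$ is bounded (being contained in the chart domain $I_1$), $|U_n|$ is bounded above by a constant depending only on $U$, so
\begin{align*}
|A_n \setminus \tA_n| \leq 2 K n^{18\xxi} r_n^2 \, |U_n| = O(n^{18\xxi} r_n^2).
\end{align*}

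For part (b), write $x = (u_0, z_0)$ with $u_0 \in U_n^{(Lr_n)}$. The projection of $B(x, Lr_n)$ onto the first $d-1$ coordinates is $B_{d-1}(u_0, Lr_n)$, and by definition of $U_n^{(Lr_n)}$ this projected ball is contained in $U_n$. Slicing as before,
\begin{align*}
|B(x, Lr_n) \cap (A_n \setminus \tA_n)| \leq \int_{B_{d-1}(u_0, Lr_n)} (\phi(u) - \phi_n(u))\, du,
\end{align*}
and applying \eqref{0126a} together with $|B_{d-1}(u_0, Lr_n)| = \omega_{d-1}(Lr_n)^{d-1}$ gives the stated bound $2 K \omega_{d-1} L^{d-1} n^{18\xxi} r_n^{d+1}$.

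There is no real obstacle here: the content is entirely packaged inside Lemma~\ref{lemtaylor3}, which supplies the uniform quadratic approximation error, and the present corollary is just the two integral consequences obtained by integrating that error over $U_n$ (for part (a)) and over the projection of a ball of radius $Lr_n$ (for part (b)). The only small care needed is the use of $U_n^{(Lr_n)}$ in part (b) to ensure that the entire projected ball lies inside the domain on which the bound \eqref{0126a} is valid.
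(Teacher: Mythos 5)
Your proof is correct and follows essentially the same route as the paper: express $|A_n \setminus \tA_n|$ (or its intersection with the ball) as a $(d-1)$-dimensional integral of $\phi - \phi_n$ via Fubini, then apply the uniform bound \eqref{0126a}. The only difference is that you spell out a couple of small points the paper leaves implicit, namely that $|U_n|$ is bounded (for part (a)) and that the condition $u_0 \in U_n^{(Lr_n)}$ keeps the projected ball inside $U_n$ where \eqref{0126a} applies (for part (b)).
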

\begin{proof}
	Since $|A_n \setminus \tA_n | = \int_{U_n} (\phi(u) - \phi_n(u)) du$, 
	where this is a $(d-1)$-dimensional Lebesgue integral,
	part (a)
	comes from 
	(\ref{0126a}).
	
	For (b), let $x \in U_n^{(L r_n)} \times \R$, and
	let $u \in U_n^{(L r_n)}$ be the projection of $x $ onto
	the first $d-1$ coordinates. Then if $y \in B(x,Lr_n) \cap
	A_n \setminus \tA_n$, we have $y = (v,s)$ with
	$\|v-u\| \leq L r_n$ and  $\phi_n(v) < s \leq \phi(v)$.
	Therefore using (\ref{0126a})  yields
	$$
	|B(x,Lr_n) \cap A_n \setminus \tA_n| \leq \int_{B_{(d-1)}(u,Lr_n)}
	(\phi(v) - \phi_n(v)) dv \leq 2K \omega_{d-1} 
	n^{18 \xxi} r_n^2 L^{d-1} r_n^{d-1},
	$$
	where the integral is a $(d-1)$-dimensional Lebesgue integral.
	This gives part (b). 
\end{proof}

The next lemma says that small balls centred in $A_n^*$
have almost half of their volume in $\tA_n$.

\begin{lemm}
	\label{lemhs}
	Let $\eps \in (0,1)$.
	Then
	for all large enough $n$, all $x \in A_n^*$,
	and all $s \in [\eps r_n,r_n/\eps]$,
	we have
	$|B(x,s)  \cap  \tA_n|> (1- \eps) (\omega_d/2) s^d$.
\end{lemm}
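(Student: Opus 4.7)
The plan is to compare $\tA_n$, inside $B(x,s)$, with the explicit half-space $H := \{(u,z): z \leq \ell(u)\}$, where $\ell(u):=\phi(u_x)+\langle \phi'(u_x),u-u_x\rangle$ is the first-order Taylor polynomial of $\phi$ at $u_x$, and then to apply the standard spherical cap volume formula. Writing $x=(u_x,z_x)$, the first step is to show that inside $B(x,s)$ the two side constraints $u \in U_n$ and $z \geq 0$ in the definition of $\tA_n$ do not bind, so that $B(x,s)\cap \tA_n = \{(u,z)\in B(x,s): z \leq \phi_n(u)\}$. Because $u_x\in U_n^-$ lies in a simplex contained in $U^{(n^{10\xxi}r_n)}$ and any simplex meeting $B_{d-1}(u_x,s)$ has diameter $O(n^{9\xxi}r_n)$, the inequality $s\leq r_n/\eps\ll n^{9\xxi}r_n$ for large $n$ forces all such simplices to lie in $U$, hence in $U_n$. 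For the vertical constraint, using \eqref{0901b} and \eqref{0126a}, $z_x - s \geq \phi_n(u_x) - (3/2)n^\xxi r_n - s \geq \eps_1 - O(n^{18\xxi}r_n^2) - O(n^\xxi r_n) \geq \eps_1/2 > 0$ for $n$ large.

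Next I would estimate the symmetric difference $(\tA_n\triangle H)\cap B(x,s)$ by Taylor's theorem. Since $\phi\in C^2$ on a neighbourhood of the compact set $U$, Taylor gives $|\phi(u)-\ell(u)|\leq C_0 s^2$ for $u\in B_{d-1}(u_x,s)$, while \eqref{0126a} from Lemma \ref{lemtaylor3} yields $|\phi(u)-\phi_n(u)|\leq 2K n^{18\xxi}r_n^2$. Combining these and using $s\leq r_n/\eps$,
\begin{align*}
|\phi_n(u)-\ell(u)| \leq C_0 s^2 + 2K n^{18\xxi} r_n^2 \leq C_1 n^{18\xxi}r_n^2 \quad \text{for all } u\in B_{d-1}(u_x,s),
\end{align*}
which, integrated by Fubini over $B_{d-1}(u_x,s)$, gives $|(\tA_n\triangle H)\cap B(x,s)|\leq C_1\omega_{d-1} n^{18\xxi}r_n^2 s^{d-1}$. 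Since $z_x\leq \phi(u_x)=\ell(u_x)$, the centre $x$ lies in $H$, so the elementary spherical cap estimate gives $|H\cap B(x,s)|\geq (\omega_d/2) s^d$. Combining,
\begin{align*}
|B(x,s)\cap \tA_n| \geq \tfrac{\omega_d}{2} s^d - C_1\omega_{d-1} n^{18\xxi}r_n^2 s^{d-1}.
\end{align*}

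The required bound $|B(x,s)\cap \tA_n|>(1-\eps)(\omega_d/2) s^d$ then follows if $n^{18\xxi}r_n^2/s \leq \eps \omega_d/(2C_1\omega_{d-1})$. Since $s\geq \eps r_n$, it suffices that $n^{18\xxi}r_n \leq \eps^2\omega_d/(2C_1\omega_{d-1})$, which holds for all large $n$: \eqref{eqgamma} gives $18\xxi < 1/d$ and \eqref{e:trho} gives $r_n =\Theta((\log n/n)^{1/d})$, so $n^{18\xxi}r_n\to 0$. The main obstacle is really just the careful bookkeeping in the first step, to confirm that the horizontal and vertical boundary constraints defining $\tA_n$ stay strictly outside $B(x,s)$; this is precisely what the buffers $n^{10\xxi}r_n$ (in $U_n^-$) and $(3/2)n^\xxi r_n$ (in $A_n^*$) were designed to provide. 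Everything else is a routine Taylor estimate paired with a one-plane volume comparison.
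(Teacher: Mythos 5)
Your proof is correct, but it is organised differently from the paper's. The paper writes
$|B(x,s)\cap\tA_n| = |B(x,s)\cap A_n| - |B(x,s)\cap A_n\setminus\tA_n|$
and then bounds the first term by invoking the black-box geometric Lemma~\ref{lemgeom1a} (which gives $|B(x,s)\cap A|\ge((\omega_d/2)-\eps')s^d$ for $C^2$ boundary) together with the observation that $B(x,s)\cap A=B(x,s)\cap A_n$, and bounds the second term by Lemma~\ref{corotaylor}(b). You instead compare $\tA_n$ directly with the tangent half-space $H$ at the projected centre $u_x$: the centre-in-half-space observation replaces Lemma~\ref{lemgeom1a}, a hand-done Taylor estimate replaces the appeal to $C^2$-boundary geometry, and \eqref{0126a} (i.e.\ Lemma~\ref{lemtaylor3}) plays the role of Lemma~\ref{corotaylor}(b). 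You also spell out the verification that the side constraints $u\in U_n$ and $z\ge 0$ do not intrude into $B(x,s)$, which the paper asserts tacitly in the claim $B(x,s)\cap A\subset A_n$. The trade-off: the paper's route is shorter because it reuses lemmas already in place; yours is more self-contained and makes the role of the buffers $n^{10\xxi}r_n$ and $(3/2)n^\xxi r_n$ explicit, at the cost of redoing the Taylor analysis and introducing an $\eps$-dependent constant $C_1$ (which, as you note, is harmless since $n^{18\xxi}r_n\to 0$). Both arguments go through; there is no gap.
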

\begin{proof}
	For all large enough $n$, all
	$x \in A_n^*$ and $s \in [\eps r_n,r_n/\eps]$,
	we have $B(x,s )\cap A \subset A_n$,
	so $B(x,s) \cap A_n = B(x,s) \cap A$, 
	and hence by Lemma \ref{lemgeom1a} 
	and Lemma \ref{corotaylor}(b),
	\bea
	|B(x,s) \cap  \tA_n| 
	& = & |B(x,s) \cap A_n | - 
	|B(x,s ) \cap A_n \setminus \tA_n  | 
	\nonumber 	\\
	& \geq & (1 - \eps/2) (\omega_d /2) s^d   
	- O( 
	n^{18 \xxi}
	r_n^{d+1}
	). 
	\nonumber 
	\eea
	Since $n^{18 \xxi} r_n \to  0$ by \eqref{e:rn}
	and \eqref{eqgamma}, this gives us the result.
\end{proof}

Recall that $\bH$ and $\cU'_{n,D}$ were defined in Section \ref{subsecprelims}
and at \eqref{e:U'},
respectively.
The next lemma provides a bound on the probability
that a region of diameter $O(r_n)$ within $A$ or $A_n^*$
is not fully covered. This
will be used for dealing 
with `exceptional' regions such as those near the boundaries
of faces in the polytopal approximation.

\begin{lemm}
	\label{lemcov3}
	Let $\eps \in (0,1)$, $K_1 >0$. Then as $n \to \infty$, 
	\bea
	\sup_{z \in \R^d}
	\Pr[ F_{n}(B(z,K_1r_n) \cap A_n^*, \cU'_{n,\tA_n}
	)^c ]
	= O(n^{\eps - (d-1)/d}),
	\label{0816b}
	\\
	\sup_{z \in \R^d} \Pr[ F_{n}(B(z,K_1r_n) \cap \bH 
	,\cU'_{n ,\bH}
	)^c ]
	= O(n^{\eps - (d-1)/d}),
	\label{0816a}
	\eea 
	and
	\bea
	\sup_{z \in \R^d}
	\Pr[ F_{n}(B(z,K_1r_n) \cap A
	, \cU'_{n,A}
	)^c ]
	= O(n^{\eps - (d-1)/d}).
	\label{0816c}
	\eea
\end{lemm}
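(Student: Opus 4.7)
\medskip
\noindent
\emph{Proof plan for Lemma \ref{lemcov3}.} The three estimates have a common structure; I will describe the argument for \eqref{0816b} and then indicate the minor variants needed for \eqref{0816a} and \eqref{0816c}. The plan is to cover the $O(r_n)$-diameter region by a finite $\delta r_n$-net, replace coverage of the region by a ``thickened'' $k$-coverage event at each net point, bound the corresponding Poisson mean from below by a half-space-type volume estimate, and finish with a Poisson tail inequality and a union bound.

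Fix a small $\delta \in (0,1)$, to be chosen after $\eps$. For any $z \in \R^d$, if $B(z,K_1 r_n) \cap A_n^* = \emptyset$ then \eqref{0816b} is trivial; otherwise choose a set $\Y_n \subset B(z,K_1 r_n) \cap A_n^*$ with cardinality bounded by a constant depending only on $K_1,\delta,d$, such that $B(z,K_1 r_n) \cap A_n^* \subset \bigcup_{y \in \Y_n} B(y,\delta r_n)$. Say that $(x,s) \in \cU'_{n,\tA_n}$ \emph{$\delta$-covers} $y$ if $\|y-x\| \leq r_n(s-\delta)$, equivalently if $B(y,\delta r_n) \subset B(x,r_n s)$. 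If every $y \in \Y_n$ is $\delta$-covered by at least $k$ distinct points of $\cU'_{n,\tA_n}$, then every point of $B(z,K_1 r_n) \cap A_n^*$ lies in at least $k$ of the balls $B(x,r_n s)$, $(x,s) \in \cU'_{n,\tA_n}$, so $F_n(B(z,K_1 r_n) \cap A_n^*,\cU'_{n,\tA_n})$ holds.

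Let $N(y)$ denote the number of $\delta$-covers of $y$ by points of $\cU'_{n,\tA_n}$; this is a Poisson random variable with mean
\[
\E[N(y)] = n\, \E\bigl[\1\{\delta < Y \leq n^\xxi\}\,|B(y,r_n(Y-\delta)) \cap \tA_n|\bigr].
\]
For $y \in A_n^*$ and $s \in [2\delta, 1/\eps]$, Lemma \ref{lemhs} gives $|B(y,r_n(s-\delta)) \cap \tA_n| \geq (1-\eps)(\omega_d/2)(r_n(s-\delta))^d$ for all $n$ large. Using $\E[Y^{d+\eps'}] < \infty$ (so that $\E[Y^d \1\{Y > n^\xxi\}] \to 0$) together with dominated convergence ($\E[(Y-\delta)_+^d] \to \E[Y^d]$ as $\delta \downarrow 0$) and \eqref{e:rn}, one finds that by first choosing $\delta$ small and then $n$ large,
\[
\E[N(y)] \;\geq\; (1-2\eps)(\omega_d/2)\, n r_n^d \E[Y^d] \;\geq\; (1-3\eps)(1-1/d) \log n,
\]
uniformly in $y \in A_n^*$. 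The Poisson tail bound $\Pr[\mathrm{Poi}(\lambda) < k] \leq \lambda^{k-1} e^{-\lambda}/(k-1)!$ for $\lambda \geq k$ then gives $\Pr[N(y) < k] = O((\log n)^{k-1} n^{-(1-3\eps)(d-1)/d})$, and a union bound over the $O(1)$ points of $\Y_n$, combined with absorbing logarithmic factors into $n^\eps$ (at the cost of shrinking the $\eps$ in the volume estimate), delivers \eqref{0816b}.

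The same scheme yields \eqref{0816a} and \eqref{0816c}: for \eqref{0816a} replace Lemma \ref{lemhs} by the elementary bound $|B(y,r) \cap \bH| \geq (\omega_d/2) r^d$ valid for every $y \in \bH$ and $r > 0$; for \eqref{0816c} use Lemma \ref{lemgeom1a}, which supplies the same half-space lower bound uniformly over $y \in A$ for small $r$, using $\partial A \in C^2$. The main technical nuisance, common to all three cases, is simultaneously controlling the thickening error (introduced by passing from $Y$ to $(Y-\delta)_+$) and the truncation error (introduced by restricting to $Y \leq n^\xxi$ via $\cU'_{n,\cdot}$); the moment assumption $\E[Y^{d+\eps'}] < \infty$ is exactly what makes both errors negligible, and once they are absorbed into an arbitrarily small loss in the exponent, the Poisson concentration produces the stated rate $O(n^{\eps - (d-1)/d})$.
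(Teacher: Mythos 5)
Your proposal is correct and follows essentially the same approach as the paper's proof: cover $B(z,K_1 r_n)\cap A_n^*$ by an $O(1)$-size net of balls of radius $\Theta(r_n)$, pass to a thickened $k$-coverage event at each net point by shrinking the grain radii, invoke Lemma \ref{lemhs} (respectively the half-space bound or Lemma \ref{lemgeom1a}) for the Poisson mean lower bound $\gtrsim (1-o(1))(\omega_d/2)nr_n^d\E[Y^d]$, and close with the Poisson tail bound and a union bound. The only cosmetic difference is that you shrink radii additively ($r_n(s-\delta)$) while the paper shrinks multiplicatively ($(1-\delta)r_ns$); both yield the same conclusion.
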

\begin{proof}
	Let $\delta \in (0,1/2)$ with
	$(1-\delta)^{d+1} \E[Y^d{\bf 1}_{\{\delta \leq Y \leq 1/\delta\}}] >
	(1-\eps/2) \E[Y^d]$.
	For any point set $\X \subset \R^d \times [\delta,\infty)$, define
	$\tilde{Z}_n(\X)$ similarly to $Z_n(\X)$ but with slightly smaller
	balls, namely
	$$
	\tilde{Z}_n(\X) : =  
	\{y \in \R^d: \# (\{(x,s) \in \X: y \in B(x,(1-\delta)r_ns)\}) \geq k\},
	$$
	and note that for any 
	$y \in \tilde{Z}_n(\X)$
	we have $B(y,\delta^2 r_n) \subset Z_n(\X)$. 
	
	There is a constant $\ell$ independent
	of $n$ such that given $z \in \R^d$ and given $n$,
	we can (and do) choose $x_{n,1},\ldots,x_{n,\ell} \in A_n^*$  with
	$B(z,K_1r_n) \cap A^*_n \subset 
	\cup_{i=1}^\ell B(x_{n,i}, \delta^2 r_n)$.  
	Then for all $n $ large enough, and for $1 \leq i \leq \ell$,
	using Lemma \ref{lemhs} in the third line below we have
	\begin{align*}
		1- \Pr[&F_{n}(B(x_{n,i},\delta^2 r_n) \cap A^*_n, \cU'_{n,\tA_n})] 
		\leq \Pr[ x_{n,i} \notin
		\tilde{Z}_n 
			(\cU'_{n,\tA_n} \cap (\tA_n \times [\delta,1/\delta] ))
		] 
		\\
		&\leq
		k (n \omega_d (r_n/\delta)^d )^{k-1}
		\exp \Big( - n
		\int_{[\delta ,1/\delta]}
		|\tA_n \cap B(x_i,(1-\delta)r_n t)|\mu_Y(dt) \Big)
		\\
		&\leq
		k (n \omega_d (r_n/\delta)^d )^{k-1}
		\exp \Big( - n
		\int_{[\delta ,1/\delta]}
		(1-\delta)^{d+1} (\omega_d/2) 
		r_n^d t^d\mu_Y(dt) \Big)
		\\
		&\leq
		k (n \omega_d (r_n/\delta)^d )^{k-1}
		\exp \Big( - ( \omega_d/2) 
		n r_n^d
		(1- \eps/2) \E[Y^d]
		\Big).
	\end{align*}
	Therefore by
	\eqref{rtd+},
	for $n $ large
	\begin{align*}
		\Pr[\{F_{n}(B(x_{n,i},\delta^2 r_n)
		\cap A_n^*,\cU'_{n,\tA_n})\}^c] 
		\leq n^{\eps - (d-1)/d}. 
	\end{align*}
	Taking the union of the above events for $1 \leq i \leq \ell$,
	and applying the union  bound, gives us \eqref{0816b}.
	The proofs of (\ref{0816a}) and (\ref{0816c})  are
	similar.
\end{proof}

Let $\partial_{d-2} \Gamma_n  := \cup_{i=1}^{\sigma^-(t)} \partial_{d-2}
H_{n,i}$,
the union of all $(d-2)$-dimensional faces in the boundaries 
of the faces making up
$\Gamma_n$
(the $H_{n,i}$ were defined just after (\ref{0126a})).
Recall from (\ref{eqgamma}) that
$\xxi \in (0,1/(198d(18+d)))$.
Given $n >0$, 
define the set $Q^+_n \subset \R^d$  by 
\bea
Q^+_n  :=   
(\partial_{d-2} \Gamma_n \oplus B(o,8dn^{4\xxi } r_n))
\cap A_n^{*}.
\label{Qplusdef}
\eea
Thus $Q_n^+$ is
a region near the corners of our polygon approximating
$\partial A$ (if $d=2$) or near the boundaries of the
faces of our polytopal surface approximating
$\partial A$ (if $d  \geq 3$). In the next lemma 
we show that $Q_n^+$ is  fully $k$-covered with high probability. 
\begin{lemm}
	\label{lemcorn3}
	It is the case that
	$\Pr[F_{n}(Q^+_n,\cU'_{n,\tA_n})] \to 1$ 
	as $n \to \infty$.
\end{lemm}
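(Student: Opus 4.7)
The plan is to cover $Q_n^+$ by a collection of balls of radius $K_1 r_n$ for some fixed constant $K_1$, and then apply the half-ball estimate \eqref{0816b} of Lemma \ref{lemcov3} together with a union bound. The key quantitative input is a good bound on the covering number $\kappa(Q_n^+, K_1 r_n)$.

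First I would estimate $\kappa(Q_n^+, K_1 r_n)$. Using that $\Gamma_n$ has $\sigma^-(n) = \Theta((n^{9\xxi} r_n)^{-(d-1)})$ faces, each with perimeter of order $(n^{9\xxi} r_n)^{d-2}$, one sees that $\partial_{d-2}\Gamma_n$ has $(d-2)$-dimensional Hausdorff measure of order $(n^{9\xxi} r_n)^{-1}$ (and for $d=2$, is a finite set of $\Theta((n^{9\xxi} r_n)^{-1})$ points). Hence
\[
  \kappa(\partial_{d-2}\Gamma_n, n^{4\xxi} r_n) = O\bigl(n^{-9\xxi} r_n^{-1}\cdot (n^{4\xxi} r_n)^{-(d-2)}\bigr) = O\bigl(n^{-(4d+1)\xxi} r_n^{-(d-1)}\bigr).
\]
Since $Q_n^+$ lies inside the $8dn^{4\xxi} r_n$-tubular neighbourhood of $\partial_{d-2}\Gamma_n$, and a ball of radius $(8d+K_1)n^{4\xxi} r_n$ can itself be covered by $O(n^{4\xxi d})$ balls of radius $K_1 r_n$, multiplying gives
\[
  \kappa(Q_n^+, K_1 r_n) = O\bigl(n^{-\xxi} r_n^{-(d-1)}\bigr).
\]

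Next, let $z_{n,1},\ldots,z_{n,N(n)}$ be centres realising such a covering. Since $Q_n^+ \subset A_n^*$ by definition, any $x \in Q_n^+ \cap B(z_{n,i},K_1 r_n)$ automatically lies in $B(z_{n,i},K_1 r_n) \cap A_n^*$. Therefore
\[
  F_{n}(Q_n^+,\cU'_{n,\tA_n})^c \subset \bigcup_{i=1}^{N(n)} F_{n}(B(z_{n,i},K_1 r_n) \cap A_n^*,\cU'_{n,\tA_n})^c.
\]
Fix $\eps \in (0,\xxi)$. Applying the union bound together with \eqref{0816b} of Lemma \ref{lemcov3} yields
\[
  \Pr[F_{n}(Q_n^+,\cU'_{n,\tA_n})^c] = O\bigl(n^{-\xxi} r_n^{-(d-1)} \cdot n^{\eps - (d-1)/d}\bigr),
\]
and since $r_n = \Theta((\log n / n)^{1/d})$ by \eqref{e:trho}, this equals $O(n^{\eps - \xxi} (\log n)^{-(d-1)/d})$, which tends to zero.

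The only real obstacle is bookkeeping in the covering-number estimate: one must verify that the factor $n^{-9\xxi}$ coming from the size of $\partial_{d-2}\Gamma_n$ beats the factor $n^{8\xxi}$ coming from the width-to-scale ratio of the tubular neighbourhood, leaving a net margin of $n^{-\xxi}$ which is precisely what is needed to overcome the $n^{\eps-(d-1)/d}$ failure probability per ball from Lemma \ref{lemcov3}. This is why $\xxi$ was chosen (in \eqref{eqgamma}) to be positive, and the rest of the argument is purely combinatorial.
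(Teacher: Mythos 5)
Your proof is correct and takes essentially the same approach as the paper: cover $Q_n^+$ by $O(n^{-\xxi}r_n^{1-d})$ balls of radius comparable to $r_n$, apply estimate \eqref{0816b} of Lemma \ref{lemcov3} with a union bound, and check the exponents. The only cosmetic difference is that you track the $(d-2)$-dimensional Hausdorff measure of $\partial_{d-2}\Gamma_n$ as a whole before covering, whereas the paper covers the boundary of each face $H_{n,i}$ separately and sums over the $\sigma^-(n)$ faces; the resulting covering number and final bound are identical.
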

\begin{proof}
	Let $\eps := \xxi/2$.
	For each face $H_{n,i}$ of $\Gamma_n$,  $1 \leq i \leq 
	\sigma^-(n)$, we claim that we can
	take $x_{i,1},\ldots, x_{i,k_{n,i}} \in \R^d$
	with $\max_{1 \leq i \leq \sigma^-(n)} k_{n,i}
	= O(n^{9 d \xxi -10 \xxi} )$,
	such that
	\bea
	(\partial_{d-2} H_{n,i})
	\oplus B(o, 8 dn^{4 \xxi}r_n) 
	\subset \cup_{j=1}^{k_{n,i}} 
	B(x_{i,j},r_n).
	\label{0206a}
	\eea
	Indeed, we can cover $\partial_{d-2} H_{n,i}$
	by $O(n^{5 \xxi (d-2)})$ balls of radius
	$n^{4\xxi} r_n$, denoted $B_{i,\ell}^{(0)}$ say.
	Replace each  ball $B_{i,\ell}^{(0)}$
	with a 
	ball $B'_{i,\ell}$ with the same centre as
	$B_{i,\ell}^{(0)}$ and with 
	radius  $ 9d n^{4 \xxi }r_n$. Then cover $B'_{i,\ell} $
	by $O((n^{4 \xxi} )^{d})$ balls of radius $r_n$.
	Every point in the set on the left side of (\ref{0206a})
	lies in one of these balls of radius $r_n$, and the claim
	follows.
	
	By the definitions of $Q_n^+$ and $\partial_{d-1}\Gamma_n$,
	and then \eqref{0206a}, we
	have 
	$$
	Q_n^+ \subset \cup_{i=1}^{\sigma^-(n)}((
	\partial_{d-2} H_{n,i}) \oplus B(o,8dn^{4\xxi}r_n))
	%
	\subset \cup_{i=1}^{\sigma^-(n)} \cup_{j=1}^{k_{n,i}}
	B(x_{i,j}, r_n),$$ so by the union bound,
	\bean
	\Pr [ F_{n} ( Q^+_n ,\cU'_{n,\tA_n}) ^c]
	\leq 
	\sum_{i=1}^{\sigma^-(n)} \sum_{j=1}^{k_{n,i}}
	\Pr[
	F_{n}( B(x_{i,j},  r_n) \cap A_n^*,
	\cU'_{n,\tA_n}
	)^c]. 
	\eean
	Thus using Lemma \ref{lemcov3} 
	and the fact that $\sigma^-(n) = O((n^{9 \xxi }r_n)^{1-d})$, 
	we obtain that
	\begin{align*}
		\Pr [ F_{n} ( Q^+_n ,\cU'_{n,\tA_n}) ^c]
		&
		= O((n^{9 \xxi } r_n)^{1-d}n^{(9d-10)\xxi} 
		n^{\eps -1 + 1/d} ) \\
		&
		=O(n^{ - \xxi} r_n^{1-d} 
		n^{\eps - 1 +1/d}) = O(n^{-\xxi/2}).
	\end{align*}
	Thus
	$
	\Pr [ F_{n} ( Q^+_n ,\cU'_{n,\tA_n}) ^c] $ tends  to zero.
\end{proof}

\subsection{
	Induced coverage process and
	proof of Proposition \ref{lemsurf}}
\label{secinduced}

\textcolor{\blue}{In this subsection
we shall conclude the proof of Proposition \ref{lemsurf},
concerning the limiting probability of covering
an approximating polytopal shell. We shall do so} 
by means of
a device we refer to as the {\em induced coverage process.}
This is obtained by taking the parts of $\tA_n$
near the flat parts of $\Gamma_n$, along with any  Poisson
points therein, and rearranging them into a flat
region of macroscopic size.

Partition each face $H_{n,i}$, $1 \leq i \leq \sigma^-(n)$ into
a collection of
$(d-1)$-dimensional hypercubes of side 
$n^{3\xxi}r_n$
contained in
$H_{n,i}$ and distant more than $n^{4 \xxi}r_n$ from
$\partial_{d-2}H_{n,i}$, 
together with a `border region' contained within $\partial_{d-2} H_{n,i}
\oplus B(o,2 n^{4 \xxi }r_n)$.
Let $\tar_n$ be
the union (over all faces)  of the boundaries of the $(d-1)$-dimensional
hypercubes in this partition (see Figure \ref{f:plaid}: the $P$ stands
for `plaid').
Set
\begin{align}
	\tar^+_n := [ \tar_n \oplus B(o, 9 n^{ \xxi} r_n) ] \cap \tA_n^{*}.
	\label{e:Tn+}
\end{align}

\begin{figure}[!h]
	\centering
	\includegraphics[width=8cm]{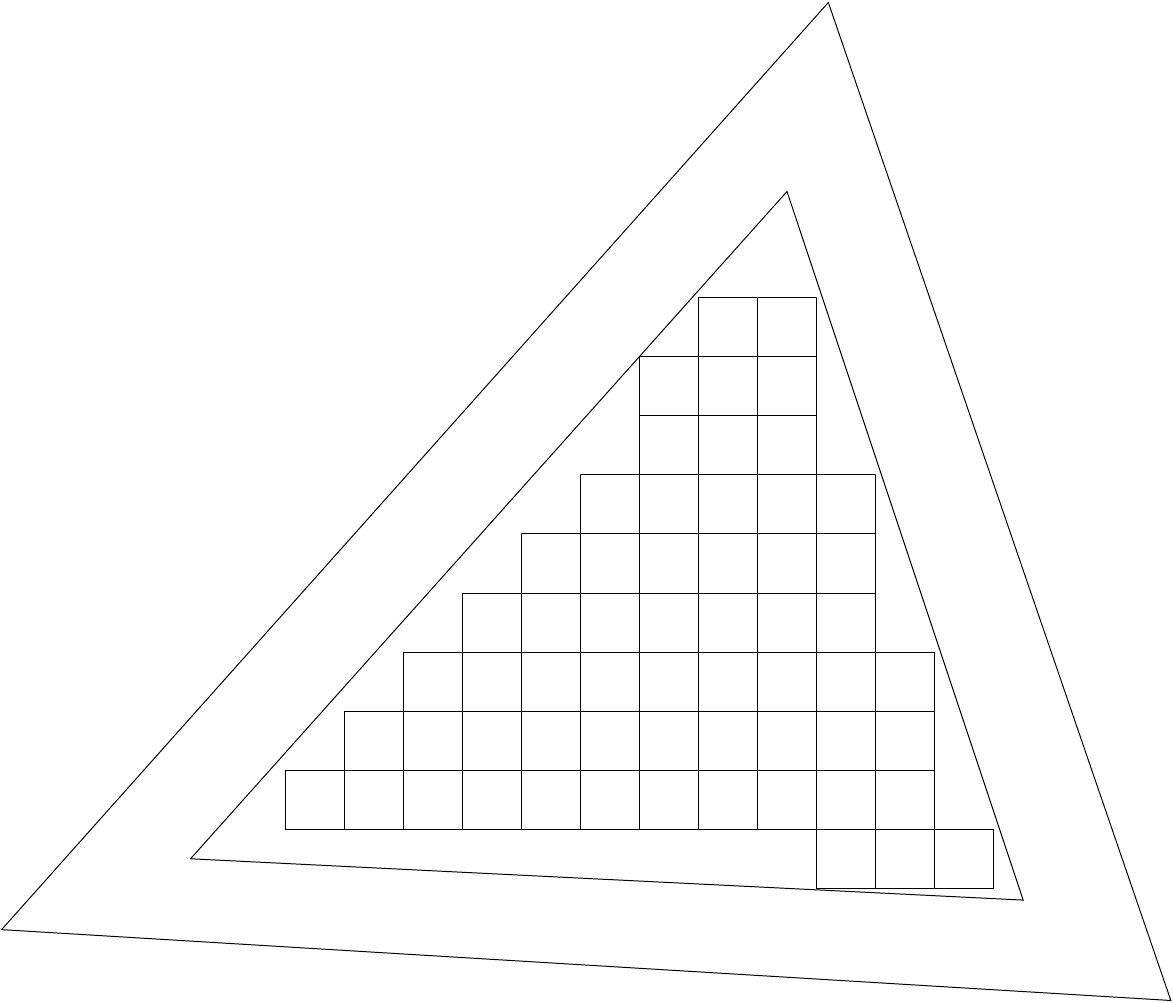} 
	\caption{\label{f:plaid} Part of the `plaid' region $\tar_n$ when $d=3$.
		The outer triangle represents one face $H_{n,i}$,
		and the part of $\tar_n$ within $H_{n,i}$ is
		given by the union of the boundaries of the
		squares. \textcolor{\blue}{The squares themselves are some of the
		$I_{n,i}^+$.} The outer triangle has sides of length
		$\Theta(n^{9 \xxi} r_n)$, while the squares
		have sides of length $n^{3 \xxi} r_n$. The region
		between the two triangles
		has thickness $n^{4 \xxi} r_n$ and 
		is contained in $Q_n^+$.}
\end{figure}

\begin{figure}[!h]
	\centering
	\includegraphics[width=8cm]{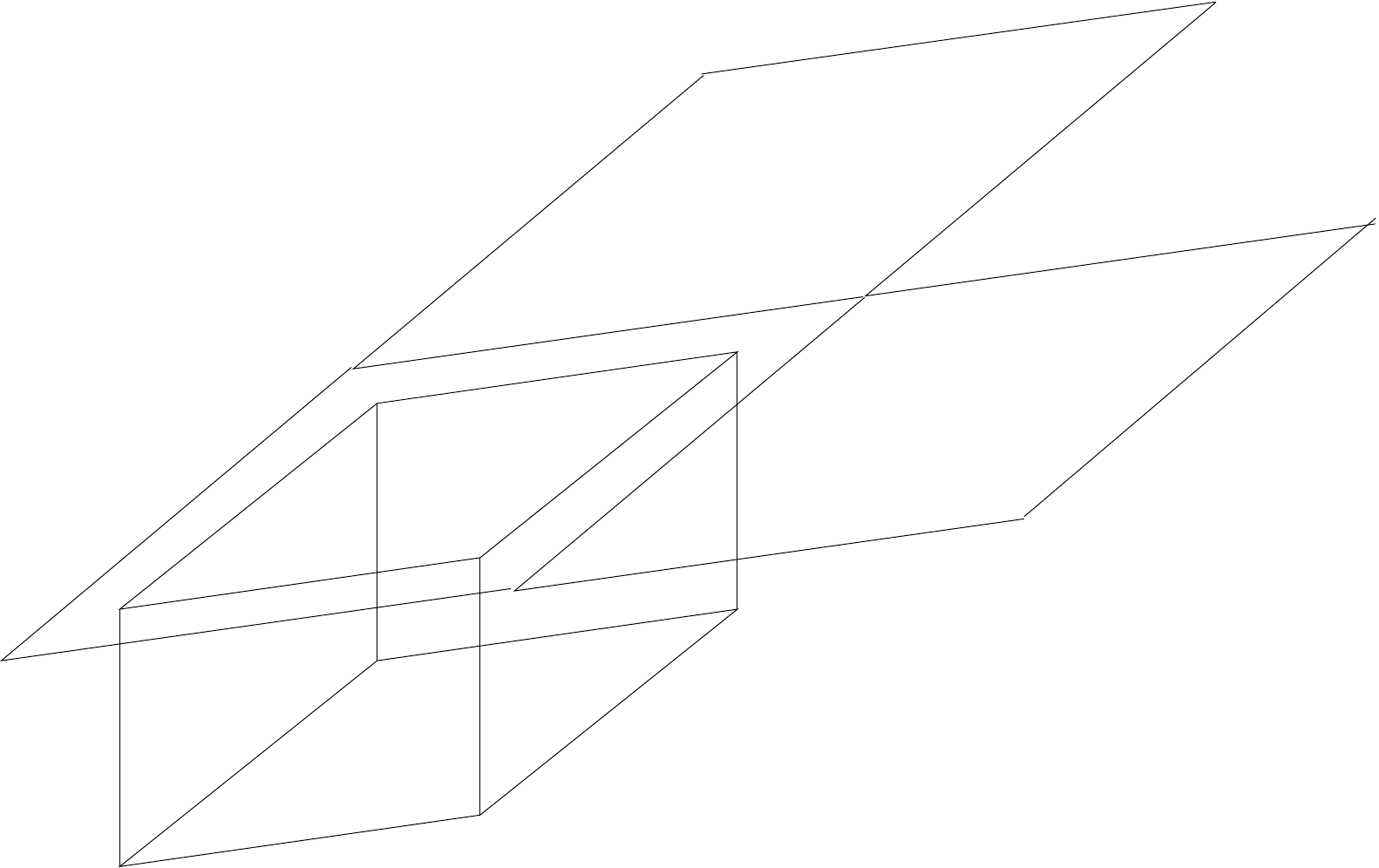} 
	\caption{\label{f:SnIn}Diagram for $d=3$ showing
	three of the squares $I_{n,i}^+$ (as in Figure \ref{f:plaid} but
	now shown with 3-dimensional perspective), along with one of
	the squares $I_{n,i}$ (a slightly smaller square) and
	the corresponding cube $S_{n,i}$.}
\end{figure}

Enumerate the $(d-1)$-dimensional hypercubes in the above subdivision
of the faces $H_{n,i}, 1 \leq i \leq \sigma^-(n)$, as
$I^+_{n,1},\ldots, I^+_{n,\lambda(n)}$. For $1 \leq i \leq \lambda(n)$
let 
\linebreak
$I_{n,i}:= I^+_{n,i} \setminus (\tar_n \oplus B(o, n^\xxi r_n))$, which is a 
$(d-1)$-dimensional hypercube 
of side length $(n^{3\xxi } - 2n^\xxi) r_n$
with the same centre and orientation as $I_{n,i}^+$.
Writing $|\cdot|$ below for $(d-1)$-dimensional 
Hausdorff measure,
we claim that the total $(d-1)$-dimensional Hausdorff measure of these
$(d-1)$-dimensional hypercubes
satisfies
\bea
\lim_{n \to \infty}( | 
\cup_{i=1}^{\lambda(n)} I_{n,i} | )
= 
|\Gamma|.
\label{0912a}
\eea
Indeed,  for $1 \leq i \leq \lambda(n)$ we have
$|I_{n,i}|/|I_{n,i}^+| = ((n^{3 \xxi} - n^\xxi) /n^{3\xxi} )^{d-1}$,
which tends to one,
so the proportionate amount removed
near the boundaries of the $(d-1)$-dimensional hypercubes $I^+_{n,i}$ to
give $I_{n,i}$ vanishes.
Also  the `border region' of a face
$H_{n,i}$ that is not contained in any of the 
of the $I^+_{n,j}$s has $(d-1)$-dimensional Hausdorff measure 
that is $O((n^{9 \xxi} r_n)^{d-2} n^{4\xxi}r_n)$, so that
the total $(d-1)$-dimensional measure of 
the removed  regions near the boundaries of the faces is
$O((n^{9 \xxi} r_n)^{1-d} \times (n^{9\xxi} r_n)^{d-2} n^{4\xxi}r_n) =
O(n^{ -5 \xxi} )$, which tends to zero.
Thus the claim (\ref{0912a}) is justified.

For $1 \leq i \leq \lambda(n)$, let
$S_{n,i}^+$, respectively $S_{n,i}$,
be a $d$-dimensional cube of side $n^{3\xxi} r_n$,
respectively of side $(n^{3\xxi} -2 n^\xxi)r_n$, 
having $I^+_{n,i}$, respectively $I_{n,i}$, as its `upper face',
i.e. having $I_{n,i}^+$ (resp. $I_{n,i}$) as a face and lying
within $\tA_n$.
%
We shall verify in Lemma \ref{lemblox}
that $S_{n,1}^+,\ldots, S_{n,\lambda(n)}^+$ are disjoint.

Define a  region $D_n \subset \R^{d-1}$ that is approximately a rectilinear
hypercube with lower
left corner at the origin, and obtained as the union of $\lambda(n)$
disjoint $(d-1)$-dimensional 
hypercubes of  side $n^{3 \xxi} r_n$. We can and do arrange
that 
$D_n \subset [0,|\Gamma_n|^{1/(d-1)}+ n^{3\xxi}r_n]^{d-1}$ for each $n$,
and $|D_n| \to |\Gamma|$ as $t \to \infty$.
Define the flat slabs (or if $d=2$, flat strips)
\begin{align}
	\Sn:= D_n  \times [0, (n^{3\xxi} -2 n^\xxi) r_n]; ~~~~~~
	\Sn^+ := D_n  \times [0,  n^{3 \xxi} r_n],
	\label{e:Srhodef3d}
\end{align}
and denote the lower boundary of $\Sn$ (that is, the set
$D_n \times \{0\}$) by $L_n$.

Now choose   rigid motions $\theta_{n,i}$
of $\R^d$, $1 \leq i \leq \lambda(n)$, such that
under applications of these rigid motions the blocks $S^+_{n,i}$ 
are reassembled to form the slab $\Sn^+$, with the square face
$I^+_{n,i}$
of the $i$-th block transported to part of the lower boundary $L_n$  of $\Sn$.
In other words, choose the rigid motions so that the sets 
$ \theta_{n,i}(S^+_{n,i})$, 
$ 1 \leq i \leq \lambda(n)$,
have pairwise disjoint interiors and their union is 
$\Sn^+$, and also  $\theta_{n,i}(I^+_{n,i}) \subset L_n$ for 
$1 \leq i \leq \lambda(n)$.

Recall that $n_0$ was chosen shortly before \eqref{lemtaylor3}.
Given $n \geq n_0$ and $i \leq \lambda(n)$,
define $\tilde{\theta}_{n,i}: \R^{d+1}  \to \R^{d+1} $ 
by $\tilde{\theta}_{n,i}(x,r) = (\theta_{n,i}(x),r)$
for all $x \in \R^d, r \in \R$.
By the Restriction, Mapping and Superposition theorems (see e.g.\ \cite{LP}),
the point process $\tilde{\cU}_{n,\Sn^+} := \cup_{i=1}^{\lambda(n)} 
\tilde{\theta}_{n,i} 
(\cU_{n,\tA_n} \cap (S^+_{n,i} \times \R_+)) $ is 
a Poisson process in
$\Sn^+ \times \R_+$ 
with intensity measure $n{\rm Leb}_d \otimes \mu_Y$
(strictly speaking, with intensity measure $n{\rm Leb}_d|_{\Sn^+} 
\otimes \mu_Y$).

We extend $\tcU_{n,\Sn^+}$ to a Poisson process $\tcU_{n,\bH}$
in $\bH \times \R_+$, where
$ \bH := \R^{d-1} \times [0,\infty)$, as follows.
Let $\tcU_{n,\bH \setminus \Sn^+}$
be a Poisson process with intensity measure
$n {\rm Leb}_d  \otimes \mu_Y$ in
$(\bH \setminus \Sn^+) \times \R_+$,
independent of $\cU_{n,\tA_n}$, and  set 
\bea
\tcU_{n,\bH}:= \tcU_{n,\Sn^+} \cup \tcU_{n,\bH \setminus \Sn^+};
~~~~~~~~~~~~~~~\tcU'_{n,\bH}:= \tcU_{n,\bH}
\cap (\R^d \times [0,n^\xxi r_n]).
\label{Uprdef}
\eea
By the Superposition theorem (see e.g.\ \cite[Theorem 3.3]{LP}),
$\tcU_{n,\bH}$ is a homogeneous Poisson process 
in
$\bH \times [0,\infty)$
with intensity measure $n{\rm Leb}_d \times \mu_Y$.  
We call the 
collection of balls of radius $r_nt$
centred on $x$, where $\{(x,t)\}$ are  the points
of this point process, the {\em induced coverage process}.

The next lemma says that 
the region
$\tar_n^+$ is covered with high probability. It is needed because locations
in $\tar_n^+$ lie near the boundary of blocks $S_{n,i}$, so that
coverage of these locations by $Z_n(\cU_{n,A})$ does not necessarily
correspond to coverage of their images in the induced coverage process.

\begin{lemm}
	\label{lemtartan}
	It is the case that
	$ \lim_{t \to \infty} \Pr[ F_n(P^+_n, \cU'_{n,\tA_n}) ] =1.$  
\end{lemm}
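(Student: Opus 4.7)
The plan is to follow exactly the strategy used in the proof of Lemma~\ref{lemcorn3}: cover $\tar^+_n$ by a polynomial-in-$n$ family of balls of radius $r_n$ centred in $A_n^*$, apply the first bound~\eqref{0816b} of Lemma~\ref{lemcov3} to each, and conclude by the union bound. Note that $\tar^+_n \subset \tA_n^* \subset A_n^*$ by definition~\eqref{e:Tn+}, so the relevant estimate of Lemma~\ref{lemcov3} applies directly.

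The substantive work is the geometric counting. Within each face $H_{n,i}$ of $\Gamma_n$, the set $\tar_n$ is the $(d-2)$-dimensional skeleton of a grid of $\Theta(n^{6\xxi(d-1)})$ hypercubes of side $n^{3\xxi}r_n$. For a single $(d-2)$-dimensional face of that grid, which has diameter $\Theta(n^{3\xxi}r_n)$, its $9n^\xxi r_n$-neighbourhood can be covered by $O(n^{3\xxi(d-2)} \cdot n^{2\xxi}) = O(n^{(3d-4)\xxi})$ balls of radius $r_n$: an $O(n^{3\xxi})^{d-2}$ factor along the tangent directions and an $O(n^\xxi)^2$ factor for the two transverse directions. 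Summing over the $O(n^{6\xxi(d-1)})$ small faces contained in each $H_{n,i}$ and then multiplying by the face count $\sigma^-(n) = O((n^{9\xxi}r_n)^{1-d})$, the total number of balls is of order
$$
O\bigl( n^{6\xxi(d-1) + (3d-4)\xxi} \cdot (n^{9\xxi}r_n)^{1-d} \bigr)
= O\bigl( n^{-\xxi} r_n^{1-d} \bigr).
$$

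Applying Lemma~\ref{lemcov3} to each covering ball and using the union bound, for any fixed $\eps > 0$ one then obtains
$$
\Pr\bigl[F_n(\tar^+_n, \cU'_{n,\tA_n})^c\bigr]
= O\bigl( n^{-\xxi + \eps - (d-1)/d} r_n^{1-d} \bigr).
$$
Since $r_n^d = \Theta((\log n)/n)$ by~\eqref{e:trho}, we have $r_n^{1-d} = O(n^{(d-1)/d})$ up to a polylogarithmic factor, and the bound simplifies to $O(n^{\eps - \xxi})$ (times polylog); choosing $\eps < \xxi$ drives it to zero.

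I do not anticipate any serious obstacle here: the argument is essentially an exponent-counting exercise, and the small value of $\xxi$ fixed in~\eqref{eqgamma} is chosen precisely to leave plenty of slack in such bookkeeping. The most delicate point is simply making sure that the neighbourhood coverage around each $(d-2)$-face really does use only $O(n^{(3d-4)\xxi})$ radius-$r_n$ balls; this follows by dividing the tube into $r_n$-sized cubes and bounding the number of such cubes that meet the tube.
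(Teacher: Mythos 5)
Your proof is correct and takes essentially the same route as the paper: both cover $\tar_n^+$ by $O(n^{-\xxi}r_n^{1-d})$ balls of radius $r_n$, invoke \eqref{0816b} from Lemma~\ref{lemcov3}, and finish with a union bound and the choice $\eps<\xxi$. The only (cosmetic) difference is that the paper first covers $\tar_n$ by $O(n^{-(1+d)\xxi}r_n^{1-d})$ balls of radius $n^\xxi r_n$ and subdivides each into $O(n^{d\xxi})$ balls of radius $r_n$, whereas you count the $r_n$-balls directly from the tubular structure of each $(d-2)$-face; the two countings give the same exponent $\xxi(6(d-1)+(3d-4)+9(1-d))=-\xxi$, so the arguments are interchangeable.
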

\begin{proof}
	We have $\lambda(n) = O((n^{3\xxi} r_n)^{1-d})$,
	and for $1 \leq i \leq \lambda(n)$,
	the number of balls of
	radius $n^\xxi r_n$ required
	to cover the boundary of the $(d-1)$-dimensional
	hypercube $I_{n,i}^+$
	is $O((n^{3\xxi}/n^\xxi)^{d-2})$.
	Thus we can take $x_{n,1}, \ldots, x_{n,k_n} \in \R^d$, with
	$k_n = O(r_n^{1-d}n^{-\xxi -d\xxi} )$,  such that
	$\tar_n \subset \cup_{i=1}^{k_n} B(x_{n,i}, n^\xxi r_n)$.
	
	Then $\tar_n^+ \subset \cup_{i=1}^{k_n}
	B(x_{n,i},10 n^\xxi r_n) \cap A_n^*$,  
	and $B(x_{n,i},10 n^\xxi r_n)$ can
	be covered by $O(n^{d\xxi})$ balls of radius $r_n$.
	Hence by taking $\eps = \xxi/2$ in
	Lemma \ref{lemcov3},
	and using \eqref{e:rn},
	we have
	\begin{align*}
		\Pr [ F_n(P^+_n,  \cU'_{n,\tA_n}) ^c] & \leq
		\sum_{i=1}^{k_n} \Pr[F_n(B(x_{n,i},10 n^\xxi r_n) \cap A_n^*, 
		\cU'_{n,\tA_n} )^c]
		\\
		& =
		O(r_n^{1-d} n^{-\xxi -d\xxi}   n^{d\xxi}  
		n^{\eps - (d-1)/d}) = O(n^{- \xxi/2 }),
	\end{align*}
	which tends to zero.
\end{proof}


\begin{lemm}
	\label{lemblox}
	Suppose 
	$n \geq n_0$ and $i < j \leq \lambda(n)$, $i,j \in \N$. 
	Then
	$(S_{n,i}^+)^o \cap (S_{n,j}^+)^o = \emptyset$.
\end{lemm}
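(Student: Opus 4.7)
The plan is to argue by cases according to whether the bases $I^+_{n,i}$ and $I^+_{n,j}$ lie on a common face of $\Gamma_n$ or on two distinct ones. For each face $H_{n,m}$ of $\Gamma_n$ let $\Delta_m$ denote the simplex of the triangulation of $\R^{d-1}$ whose graph under $\phi_n$ is $H_{n,m}$, and let $\nu_m$ denote the unit normal to $H_{n,m}$ that points into $\tA_n$. Since $\phi$ has Lipschitz constant at most $1/9$ by \eqref{philip}, and $\phi_n$ equals the piecewise-affine interpolant of $\phi$ at the vertices of the triangulation up to an additive constant, a short elementary argument (expressing any $u-v$ in the simplex as a linear combination of edges from a common vertex) bounds $\lVert\nabla\phi_n\rVert$ on each simplex by some $\Lambda<\infty$ depending only on $d$. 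Consequently the horizontal component of each $\nu_m$ has norm bounded by some $\eta<1$ depending only on $d$.

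In the same-face case ($I^+_{n,i},I^+_{n,j}\subset H_{n,m}$), the block $S^+_{n,i}$ is the right prism with base $I^+_{n,i}$ and axis $\nu_m$, and similarly for $j$. The orthogonal projection onto the plane containing $H_{n,m}$ sends $S^+_{n,i}$ back to $I^+_{n,i}$, so disjointness of $(I^+_{n,i})^o$ and $(I^+_{n,j})^o$ (which is built into the construction of the partition of $H_{n,m}$) immediately forces disjointness of $(S^+_{n,i})^o$ and $(S^+_{n,j})^o$.

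In the distinct-faces case ($I^+_{n,i}\subset H_{n,m}$, $I^+_{n,j}\subset H_{n,m'}$, $m\ne m'$), write $\pi\colon\R^d\to\R^{d-1}$ for the projection onto the first $d-1$ coordinates. Since $\pi$ is a bi-Lipschitz bijection from $H_{n,m}$ onto $\Delta_m$ and $I^+_{n,i}$ is at ambient distance at least $n^{4\xxi}r_n$ from $\partial_{d-2}H_{n,m}$, we have $\pi(I^+_{n,i})$ at distance at least $c\,n^{4\xxi}r_n$ from $\partial\Delta_m$ in $\R^{d-1}$, for some $c>0$ depending only on $d$. Extruding $I^+_{n,i}$ along $\nu_m$ by distance $n^{3\xxi}r_n$ displaces each point horizontally by at most $\eta\,n^{3\xxi}r_n$, so $\pi(S^+_{n,i})$ lies in the $(\eta\,n^{3\xxi}r_n)$-neighbourhood of $\pi(I^+_{n,i})$. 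Since $n^\xxi\to\infty$ by \eqref{eqgamma}, after enlarging $n_0$ if necessary we have $c\,n^{4\xxi}r_n>\eta\,n^{3\xxi}r_n$ and hence $\pi(S^+_{n,i})\subset\Delta_m^o$; analogously $\pi(S^+_{n,j})\subset\Delta_{m'}^o$. The interiors of distinct simplices of the triangulation are disjoint, so $\pi(S^+_{n,i})\cap\pi(S^+_{n,j})=\emptyset$, and therefore $(S^+_{n,i})^o\cap(S^+_{n,j})^o=\emptyset$.

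The only real technical point is the quantitative comparison in the second case: the horizontal spread $O(n^{3\xxi}r_n)$ of the extruded cube must be strictly dominated by the buffer $n^{4\xxi}r_n$ separating $I^+_{n,i}$ from $\partial_{d-2}H_{n,m}$. This is precisely why the construction uses a larger power of $n^\xxi$ for the buffer width than for the hypercube side, and the condition \eqref{eqgamma} on $\xxi$ leaves ample room.
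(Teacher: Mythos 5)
Your argument is correct. It rests on the same two ingredients as the paper's proof: a Lipschitz bound on $\phi_n$ (hence on the tilt of each face $H_{n,m}$), and the scale separation between the buffer $n^{4\xxi}r_n$ around $\partial_{d-2}H_{n,m}$ and the cube side $n^{3\xxi}r_n$. The paper organizes this as a proof by contradiction: if $x$ lies in both cubes, the closest points $y \in I^+_{n,i}$ and $y' \in I^+_{n,j}$ to $x$ satisfy $\|y-y'\| \le 2n^{3\xxi}r_n$ by the triangle inequality, while \eqref{philip} and the Taylor-error bound \eqref{0126a} force $\|y-y'\|\ge n^{4\xxi}r_n/2$ once the faces are distinct. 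You instead show directly that the coordinate projection $\pi$ sends each prism $S^+_{n,i}$ into the open simplex $\Delta_m^o$ beneath its face, so the prisms for distinct faces project into disjoint open sets. A secondary difference is that you bound $\lVert\nabla\phi_n\rVert$ directly from the Lipschitz bound \eqref{philip} evaluated at the triangulation vertices, rather than routing through the Taylor comparison of $\phi_n$ with $\phi$ in \eqref{0126a}. Both routes are valid; the direct projection picture is arguably cleaner to visualise, while the paper's version avoids introducing the extra constants $\Lambda,\eta,c$ and reuses the already-stated inequality \eqref{0126a}. One small point: the citation of \eqref{eqgamma} for $n^\xxi\to\infty$ only needs $\xxi>0$, not the full upper bound there; that is cosmetic.
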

\begin{proof} 
	Suppose $(S_{n,i}^+)^o \cap (S_{n,j}^+)^o \neq \emptyset$; we
	shall obtain a contradiction.
	Let $x \in (S_{n,i}^+)^o \cap (S_{n,j}^+)^o $.
	Let $y$ be the closest point in $I^+_{n,i}$ to $x$, and 
	$y'$ the closest point in $I^+_{n,j}$ to $x$.
	Choose $\ell,m$ such that $I^+_{n,i} \subset H_{n,\ell}$
	and $I^+_{n,j} \subset H_{n,m}$. Then $\ell \neq m$
	since  if $\ell = m$  we would clearly 
	have $(S_{n,i}^+)^o \cap (S_{n,j}^+)^o = \emptyset$.
	
	Let $J_{n,\ell} \subset \R^{d-1}$ be the image of 
	$H_{n,\ell}$ under projection  onto the first $d-1$ coordinates,
	and write $y= (u,\phi_n(u))$ with
	$u \in J_{n,\ell}$.
	Let $v \in \partial J_{n,\ell}$,
	so that $(v,\phi_n(v)) \in \partial_{d-2}H_{n,\ell}$,
	By (\ref{philip}) and (\ref{0126a}), 
	\bean
	|\phi_n(v)-  \phi_n(u) | \leq |\phi(v) -\phi(u)|
	+ 4 K n^{18 \xxi } r_n^2 \leq
	(1/9)\|v-u\| + 4 K n^{18 \xxi} r_n^2.
	\eean
	Since $y \in I^+_{n,i}$ we have $\|y - (v,\phi_n(v))\| 
	\geq \dist(y,\partial H_{n,\ell}) \geq n^{4 \xxi} r_n$, so that 
	\bean
	n^{4 \xxi} r_n 
	\leq \|u-v\| + |\phi_n(u) -
	\phi_n(v)| \leq (10/9)\|u-v\| + 4Kn^{18 \xxi} r_n^2,
	\eean
	and hence 
	provided $n$ is large enough,
	$\|u-v\| \geq  n^{4 \xxi}
	r_n/2 $,
	so that
	writing $y' := (u',\phi_n(u'))$ we have
	$$
	\| y- y' \| \geq \|u- u'\| \geq \dist(u,\partial J_{n,\ell}) \geq
	n^{4\xxi}  r_n/2.
	$$
	But also $\|y-y'\| \leq \|y-x\| + \|y'-x\|  \leq 2 n^{3 \xxi}
	r_n$,
	and we have our contradiction.
\end{proof}

Denote the union of the boundaries (relative to $\R^{d-1} \times \{0\}$)
of the lower faces of
the blocks making up the strip/slab $\Sn$, by $\tQ_n^0$,
and the $(9 n^\xxi r_n)$-neighbourhood in $\bH$ of this region by $\tQ_n$
(the $C$ can be viewed as standing for `corner region'),
i.e.
\bea
\tQ_n^0 
:= \cup_{i=1}^{\lambda(n)} \theta_{n,i} (\partial I^+_{n,i}),
~~~~~~~
\tQ_n := ( \tQ_n^0 \oplus B(o,9 n^\xxi r_n) ) \cap \bH.
\label{tQdef}
\eea
Here $\partial I^+_{n,i}$ denotes the relative boundary of $I^+_{n,i}$
(relative to the face $H_{n,j}$ containing $I^+_{n,i}$).

The next lemma says that the corner region $\tQ_n$
is covered with high probability. It is needed because locations
in $C_n$ lie near the boundaries of the blocks assembled to make
the induced coverage process, so that coverage of these locations
in the induced coverage process
does not necessarily
correspond to coverage of their pre-images in the original
coverage process.

\begin{lemm}
	\label{lemtQ3}
	It is the case that
	$\lim_{n \to \infty} 
	\Pr[ F_n(\tQ_n ,\cU'_{n,\bH}) ] =1$.
\end{lemm}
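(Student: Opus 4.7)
The proof should follow the same template as Lemmas \ref{lemcorn3} and \ref{lemtartan}: reduce the coverage of the ``corner tube'' $\tQ_n$ to the coverage of finitely many $r_n$-balls, estimate each probability via the half-space bound \eqref{0816a}, and conclude by a union bound. The main task is to verify that the tube $\tQ_n$ admits an efficient covering by $r_n$-balls, sufficiently efficient to beat the $O(n^{\eps - (d-1)/d})$ estimate from Lemma \ref{lemcov3}.

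The geometric ingredient is as follows. The set $\tQ_n^0$ is a $(d-2)$-dimensional skeleton in $\partial \bH$ obtained by taking the union of the boundaries of $\lambda(n) = \Theta((n^{3\xxi}r_n)^{1-d})$ faces $\theta_{n,i}(I^+_{n,i})$, each a $(d-1)$-dimensional hypercube of side $n^{3\xxi}r_n$. Hence the total $(d-2)$-dimensional Hausdorff measure of $\tQ_n^0$ is $\Theta((n^{3\xxi}r_n)^{-1})$. By a standard tube argument, the $9n^\xxi r_n$-neighborhood $\tQ_n$ of this $(d-2)$-dimensional set in $\bH$ has $d$-dimensional Lebesgue measure $\Theta((n^{3\xxi}r_n)^{-1}(n^\xxi r_n)^2) = \Theta(r_n n^{-\xxi})$. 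Covering $\tQ_n^0$ first by $O(r_n^{1-d}n^{-(d+1)\xxi})$ balls of radius $n^\xxi r_n$ (thus covering the enlargement $\tQ_n$ by balls of radius $10 n^\xxi r_n$), and then subdividing each of these into $O(n^{d\xxi})$ balls of radius $r_n$, produces points $z_{n,1},\ldots, z_{n,N_n}\in\R^d$ with $N_n = O(r_n^{1-d}n^{-\xxi})$ such that $\tQ_n \subset \bigcup_{j=1}^{N_n} B(z_{n,j},r_n)$.

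Applying \eqref{0816a} of Lemma \ref{lemcov3} with $\eps = \xxi/2$ and $K_1=1$, and using the union bound together with $r_n^{1-d} = \Theta(n^{(d-1)/d}(\log n)^{(1-d)/d})$ coming from \eqref{e:trho}, one obtains
\begin{align*}
\Pr[F_n(\tQ_n,\cU'_{n,\bH})^c]
&\leq \sum_{j=1}^{N_n}\Pr[F_n(B(z_{n,j},r_n)\cap\bH,\cU'_{n,\bH})^c]\\
&= O\bigl(r_n^{1-d}\, n^{-\xxi}\, n^{\xxi/2 - (d-1)/d}\bigr)
= O\bigl(n^{-\xxi/2}(\log n)^{(1-d)/d}\bigr),
\end{align*}
which tends to zero, proving the lemma.

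The main (minor) obstacle is simply bookkeeping the exponent of $n$: the covering-number estimate must be tight enough, namely exploiting the codimension-$2$ tube structure so that the factor $(n^\xxi r_n)^2$ enters rather than $(n^\xxi r_n)^{d}$; this is what produces the saving $n^{-\xxi}$ in $N_n$, which in turn is what allows the bound from Lemma \ref{lemcov3} to be absorbed. The choice of $\xxi$ in \eqref{eqgamma} is more than sufficient to guarantee $\eps - \xxi < 0$, and no further restriction on $\xxi$ is required here.
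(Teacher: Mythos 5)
Your proof is correct and takes essentially the same route as the paper: cover the $(d-2)$-dimensional skeleton $\tQ_n^0$ by $O(r_n^{1-d}n^{-(d+1)\xxi})$ balls of radius $n^\xxi r_n$, enlarge to cover $\tQ_n$, subdivide into $O(n^{d\xxi})$ balls of radius $r_n$ apiece, then apply \eqref{0816a} with $\eps=\xxi/2$ and the union bound to get $O(n^{-\xxi/2})$. The only cosmetic difference is that the paper derives the covering count per face explicitly ($O(n^{2\xxi(d-2)})$ balls of radius $n^\xxi r_n$ for each hypercube boundary, times $\lambda(n)$ faces), while you appeal to the total $(d-2)$-dimensional Hausdorff measure of $\tQ_n^0$ — equivalent bookkeeping.
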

\begin{proof}
	For each of the
	$(d-1)$ dimensional hypercubes $\theta_{n,i}(S_{n,i}^+), 1 \leq i \leq \lambda(n)$,
	the number of balls of
	radius $n^\xxi r_n$ required
	to cover the boundary is $O((n^{3\xxi}/n^\xxi)^{d-2})$.
	Also $\lambda(n) = O((n^{3\xxi}r_n)^{1-d})$, so
	we can take points $x_{n,1}, \ldots, x_{n, m_n} \in L_n$,
	with $m_n = O(n^{-\xxi -d \xxi} r_n^{1-d})$,
	such that 
	$\tQ_n^0
	\subset \cup_{i=1}^{m_n} B(x_{n,i}, n^\xxi r_n)$.
	Then $\tQ_n \subset \cup_{i=1}^{m_n} B(x_{n,i},10 n^\xxi r_n)$,
	and $B(x_i,10 n^\xxi r_n)$ can be covered by $O(n^{d \xxi})$
	balls of radius $r_n$.
	Hence by (\ref{0816a}) from
	Lemma \ref{lemcov3}, taking $\eps = \xxi/2$, we
	obtain the estimate
	$$
	\Pr [ F_n(\tQ_n , \cU'_{n,\bH}) ^c] =
	O(n^{d\xxi} n^{-\xxi -d \xxi}  r_n^{1-d}  
	n^{\eps   - (d-1)/d}) 
	= O(n^{- \xxi /2}),
	$$
	which tends to zero.
\end{proof}

\begin{lemm}[Limiting coverage probabilities for the induced coverage process]
	\label{leminduced}
	With $c_{d,k,Y}$ given at \eqref{e:cdY},
	\bea
	\lim_{n \to \infty} \Pr[F_n(\Sn, \tcU'_{n,\bH})] 
	= 
	\lim_{n \to \infty} \Pr[F_n(L_n, \tcU'_{n,\bH})] 
	= \exp(- c_{d,k,Y} |\Gamma| e^{- \beta/2} ) .
	\label{e:slabU'}
	\eea
\end{lemm}

\begin{proof}
	The second  equality of (\ref{e:slabU'}) is easily obtained using
	(\ref{0517c2a}) from Lemma \ref{lemhalfd}, together with Lemma
	\ref{l:2PPs}.

	Recall that $L_n = D_n \times \{0\}$.
	Also $\partial L_n \subset \tQ_n^0$, so that
	$(\partial D_n \oplus B_{(d-1)}(o,n^\xxi r_n)) \times [0,2 n^\xxi r_n]
	\subset \tQ_n$,
	and therefore  by (\ref{0517b2a}) from Lemma \ref{lemhalfd}, 
	\bean
	\Pr[( F_n(L_n, \tcU'_{n,\bH}) \setminus F_n(\Sn, \tcU'_{n,\bH}) )
	\cap F_n(\tQ_n, \tcU'_{n,\bH}) ] \to 0.
	\eean
	Therefore using also
	Lemma \ref{lemtQ3} shows that $\Pr[ F_n(L_n, \tcU'_{n,\bH}) \setminus
	F_n(\Sn, \tcU'_{n,\bH}) ]
	\to 0$, and this gives us the rest of \eqref{e:slabU'}.
\end{proof}



\begin{proof}[Proof of Proposition \ref{lemsurf}]
	We shall approximate the event $F_n (\tA^{*}_n  , \cU'_{n,\tA_n})$ by 
	events \linebreak
	$ F_n(L_n, \tcU'_{n,\bH})$ and $F_n(\Sn,\tcU'_{n,\bH})$,  
	and apply Lemma \ref{leminduced}. The sets $Q_n^+$,
	$\tar_n^+$ and
	$\Sn$ and were defined at \eqref{Qplusdef},
	\eqref{e:Tn+} and
	\eqref{e:Srhodef3d} respectively.
	
	Suppose 
	$F_n(Q^+_n \cup \tar_n^+, \cU'_{n,\tA_n}) 
	\setminus
	F_n(\tA_n^*, \cU'_{n,\tA_n}) $
	occurs, and choose  
	$x \in
	\tA_n^{*} \setminus  (Q_n^+ \cup \tar_n^+)
	\setminus Z_n(\cU'_{n,\tA_n})  
	$. 
	Let $y \in \Gamma_n $ with $\|y-x\| = \dist (x,\Gamma_n)$.
	Then $\|y-x\| \leq  2 n^\xxi r_n$,
	and since
	$ x \notin Q_n^+ $, 
	we have $\dist(x, \partial_{d-2} \Gamma_n) \geq 8d n^{4\xxi} r_n$,
	and hence 
	$\dist(y,\partial_{d-2}\Gamma_n) \geq 8dn^{4\xxi}r_n - 2n^\xxi r_n
	\geq 3 n^{4\xxi}r_n$, 
	provided $n$ is large enough.
	Therefore $y$ lies in the interior of the face $H_{n,i}$ for some
	$i$ and $x-y$ is perpendicular to $H_{n,i}$ (if $y \neq x$).
	Also,
	since
	$x \notin \tar_n^+$,
	$\dist(x,\tar_n) \geq 9 n^{\xxi} r_n$, so
	$\dist(y,\tar_n) \geq 7 n^{\xxi}  r_n$.
	Therefore $y \in I_{n,j}$ for some $j$, and
	$x$ lies
	in the block $S_{n,j}$.
	Hence $B(\theta_{n,j}(x),n^\xxi r_n) \cap \bH  \subset
	\theta_{n,j}(S^+_{n,j})$,
	and hence by (\ref{Uprdef}),
	$$
	\#(\{
	(z,s) \in \tcU'_{n,\bH}: \theta_{n,j}(x) \in B(z,r_ns)\}) = 
	\#(\{
	(w,t) \in \cU'_{n,\tA_n}: x \in B(w,r_nt)\}) < k,
	$$
	so the event $F_n(\Sn, \tcU'_{n,\bH}) $ does not occur. Hence
	$$
	F_n(\Sn, \tcU'_{n,\bH}) \setminus F_n(\tA_n^*, \cU'_{n,\tA_n}) \subset
	F_n(Q^+_n   \cup P^+_n, \cU'_{n,\tA_n})^c,
	$$
	so  by Lemmas 
	\ref{lemcorn3}  and  \ref{lemtartan},  
	$\Pr[ F_n(\Sn,\tcU'_{n,\bH}) \setminus 
	F_n (\tA^{*}_n  , \cU'_{n,\tA_n})  ] \to 0$, and hence using
	\eqref{e:slabU'} we have
	\bea
	\liminf_{n \to \infty} \Pr[F_n(\tA_n^*,\cU'_{n,\tA_n})] \geq 
	\exp(-c_{d,k,Y} |\Gamma|e^{-\beta/2}).
	\label{0124d}
	\eea

	Suppose $F_n (\tA^{*}_n , \cU'_{n,\tA_n})  \setminus
	F_n(L_n,\tcU'_{n,\bH})$ occurs,
	and
	choose $ y \in L_n \setminus Z_n(\tcU'_{n,\bH})$.
	Take $i \in \{1,\ldots,\lambda(n)\}$
	such that $y \in \theta_{n,i}(I_{n,i}^+)$.
	Then 
	$ \dist ( y, \theta_{n,i} (\partial I_{n,i})) \leq
	n^\xxi r_n $,
	since otherwise $\theta_{n,i}^{-1}(y)$ would be
	a location in $A^{**}_n \setminus  Z_n( \cU'_{n,\tA_n})$.
	Thus $y \in \tQ_n$  by (\ref{tQdef}), and
	therefore using Lemma \ref{lemtQ3} yields that
	$$
	\Pr [ F_n(\tA^{*}_n, \cU'_{n,\tA_n}) \setminus
	F_n(L_n,\tcU'_{n,\bH}) ] \leq \Pr [ 
	F_n(\tQ_n ,\cU'_{n,\tA_n})^c ] \to 0 .
	$$ 
	Combining this with 
	\eqref{e:slabU'}
	and (\ref{0124d})
	completes the proof.
\end{proof}

\subsection{{\bf Proof of Theorem \ref{th:generaldhi}}}

\label{seclast}

\textcolor{\blue}{Having obtained  the limiting probability
of covering a polytopal region approximating a part of $A$
near the boundary and contained in a single chart, we shall now complete
the proof of Theorem \ref{th:generaldhi} by the following steps. First
we shall show that error from the polytopal approximation vanishes, and then
we put together finitely many regions of $A$, each of which
is contained in a single chart, to get the limiting probability
of covering the whole of $A$.}

Proposition \ref{lemsurf} gives
the limiting probability of coverage of a polytopal 
approximation to a region near part of $\partial A$.
The next two lemmas show that
$\Pr[F_n(\tA^{*}_n, \cU'_{n,\tA_n})]$ approximates
$\Pr[F_n(A^*_n,\cU'_{n,A_n})]$ (recall the definitions at (\ref{Pdtdef})).
From this we can deduce that
we get the same limiting probability 
even after dispensing with the polytopal approximation.

\begin{lemm}
\label{lemdiff4}
Let $E^{(1)}_n:= F_n( \tA_n^*  , \cU'_{n,\tA_n}) \setminus 
F_n(A^*_n,\cU'_{n,A_n})$. 
Then
$\Pr[E^{(1)}_n] \to 0$
as $ n \to \infty$.
\end{lemm}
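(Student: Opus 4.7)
The plan is to use the natural coupling of the Poisson processes under which $\cU'_{n,\tilde A_n} \subset \cU'_{n,A_n}$ (this is automatic since $\tilde A_n \subset A_n$ and both processes are obtained by restricting the single Poisson process $\cU_n$). Under this coupling, whenever $F_n(\tilde A_n^*, \cU'_{n,\tilde A_n})$ occurs we have $\tilde A_n^* \subset Z_n(\cU'_{n,\tilde A_n}) \subset Z_n(\cU'_{n,A_n})$, so any uncovered point witnessing the failure of $F_n(A_n^*, \cU'_{n,A_n})$ must lie in the thin layer $A_n^* \setminus \tilde A_n^*$. Therefore
\[
 \Pr[E_n^{(1)}] \le \Pr\bigl[(A_n^* \setminus \tilde A_n^*) \not\subset Z_n(\cU'_{n,A_n}) \bigr].
\]
By \eqref{0126a}, this thin layer has vertical thickness at most $\delta_n := 2K n^{18\xxi} r_n^2$, and since $\xxi$ satisfies \eqref{eqgamma} we have $n r_n^{d-1} \delta_n = O(n^{18\xxi - 1/d} (\log n)^{1+1/d}) \to 0$, so the layer is genuinely thin on the natural length scale of the SPBM.

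The heart of the proof is a Mecke-formula argument modelled closely on the proof of the estimate \eqref{0517b2a} in Lemma \ref{lemhalfd}. If $(A_n^* \setminus \tilde A_n^*) \not\subset Z_n(\cU'_{n,A_n})$, let $w$ be a location of minimal height in the closure of the uncovered region $(A_n^* \setminus \tilde A_n^*) \setminus Z_n(\cU'_{n,A_n})$. Since $\Gamma_n \subset \tilde A_n^* \subset Z_n(\cU'_{n,A_n})$ plays the role that $\Omega \times \{0\}$ did in Lemma \ref{lemhalfd}, the same geometric reasoning shows that almost surely $w$ is a meeting point $q_n(\bx_1, \ldots, \bx_d)$ of $d$ balls from $\cU'_{n,A_n}$ with $h_n(\bx_1, \ldots, \bx_d)=1$ and $w \notin Z_n(\cU'_{n,A_n} \setminus \{\bx_1, \ldots, \bx_d\})$. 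Let $M_n$ denote the number of such $d$-tuples; then it suffices to show $\E[M_n] \to 0$.

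Applying the Mecke formula, performing the change of variables $y_i = r_n^{-1}(x_i - x_1)$ and splitting off the height $u$ of the meeting point above $\phi_n$ exactly as in \eqref{0522a3}--\eqref{0512b2}, we arrive at a bound of the form
\[
 \E[M_n] \le c\, n^{d+k-1} r_n^{d(d+k-2)}\, e^{-n\omega_d r_n^d \E[Y^d]/2}\, \Bigl(\int_0^{\delta_n} e^{-c'nr_n^{d-1} u}\, du \Bigr)\, I_n,
\]
where $I_n$ is the integral of $h(\cdot)\, e^{-c' n r_n^d \pi_d(q(\cdot))}$ over $\bH^{d-1}$ against $\mu_Y^d$; this last integral is bounded uniformly in $n$ by Lemma \ref{l:inth} together with dominated convergence, just as in the proof of Lemma \ref{lemhalfd}. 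The crucial difference from Lemma \ref{lemhalfd} is that the height integral is confined to $[0,\delta_n]$, so (since $n r_n^{d-1} \delta_n \to 0$) it contributes only the factor $\delta_n = O(n^{18\xxi} r_n^2)$, rather than the $\Theta(1/(nr_n^{d-1}))$ factor obtained there. Collecting powers by means of \eqref{rtd+}, the resulting bound on $\E[M_n]$ is $O(n^{18\xxi - 1/d} (\log n)^{1+1/d})$, which tends to zero by \eqref{eqgamma}. Markov's inequality then gives $\Pr[M_n \ge 1] \to 0$, completing the proof.

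The main technical obstacle is checking that the ``lowest uncovered point'' analysis of Lemma \ref{lemhalfd} transfers cleanly to the present setting, where the lower boundary of the thin layer is the polytopal surface $\Gamma_n$ rather than a flat hyperplane, and the upper boundary is the smooth surface $\Gamma$. Since $\Gamma$ lies \emph{above} (at higher $\pi_d$ than) the uncovered region, the minimal-height point $w$ cannot be on $\Gamma$; and $\Gamma_n$ itself is $k$-covered in the event under consideration, so $w$ cannot be on $\Gamma_n$ either. Thus $w$ is forced into the interior of the layer and the $d$-ball corner analysis from Lemma \ref{lemhalfd} applies verbatim.
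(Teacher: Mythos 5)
Your high-level strategy is close to the paper's: localise the uncovered set to the thin layer $A_n^* \setminus \tA_n^*$, identify the ``lowest'' uncovered point as a $d$-ball corner, and run a Mecke-formula bound whose smallness comes from the thinness $\delta_n = O(n^{18\xxi}r_n^2)$ of the layer. The final exponent $18\xxi - 1/d$ you land on matches the paper's $O(n^{2\eps+18\xxi-1/d})$. However, there are two genuine gaps.

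First, the inequality $\Pr[E_n^{(1)}] \le \Pr[(A_n^*\setminus\tA_n^*) \not\subset Z_n(\cU'_{n,A_n})]$ is true but useless: the layer $A_n^*\setminus\tA_n^*$ has the smooth surface $\Gamma$ (restricted to $U_n^-$) as its upper boundary, and by Proposition~\ref{lemsurf}/Corollary~\ref{coroFB} the probability that $\Gamma$ is fully $k$-covered tends to $\exp(-c_{d,k,Y}|\Gamma|e^{-\beta/2}) < 1$, so the right-hand side is bounded away from zero. You then implicitly switch back to analysing $E_n^{(1)}$ itself (using ``$\Gamma_n \subset \tA_n^* \subset Z_n(\cU'_{n,A_n})$'', which holds only on $E_n^{(1)}$, not on the RHS event). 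The argument must be framed on $E_n^{(1)}$ from the start, and the Mecke computation must then also incorporate a factor controlling the probability that the corner point is not $k$-covered by the \emph{other} Poisson points (the paper uses $1-f_{n,A}(\cdot) = O(n^{\eps-(d-1)/d})$ from Lemma~\ref{lemcov3}, you use the equivalent exponential factor). Without subtracting off this additional smallness, $\delta_n$ alone does not close the gap.

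Second, and more seriously, the claim that the $d$-ball corner analysis from Lemma~\ref{lemhalfd} ``applies verbatim'' is not justified. In that lemma the lower boundary is a flat hyperplane and it is shown at the outset that $w$ lies in the \emph{relative interior} $\Omega_n^o \times (0,a_nr_n]$, away from the lateral boundary, so that a lower uncovered point can always be found within the valid region. Here the lower boundary $\Gamma_n$ is piecewise affine with ridges $\partial_{d-2}\Gamma_n$, and the lateral boundary of the slab is $\partial U_n^-$. Your observation that $w$ is not on $\Gamma$ (which lies above) nor on $\Gamma_n$ (which is covered) does not rule out $w$ lying near a ridge of $\Gamma_n$ or near the lateral boundary $\partial U_n^-$, where the ``find a lower uncovered point inside the valid region'' step can fail because moving downward in $\pi_d$ may exit $A_n^*$. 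The paper handles this by working on $E_n^{(1)} \cap F_n(Q_n^+,\cU'_{n,A_n})$ and invoking Lemma~\ref{lemcorn3} (which gives $\Pr[F_n(Q_n^+,\cU'_{n,A_n})] \to 1$) to force $x$ away from $\partial_{d-2}\Gamma_n$; the paper also uses ``minimal distance from $\Gamma_n$'' rather than minimal $\pi_d$, which is the natural quantity here since the face orientations vary, and it bounds both intersection points $q_n$ and $p_n$ (terms $I_{n,1}$ and $I_{n,2}$) rather than arguing that only $q_n$ can occur as in the half-space case. These steps are needed, not cosmetic, and your write-up does not account for them.
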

\begin{proof}
Let $\eps \in (0, (1/(2d)) - 9 \xxi )$.
Suppose $E^{(1)}_n \cap F_n(Q_n^+ , \cU'_{n,A_n})$ occurs.
Then
since $\cU'_{n,\tA_n} \subset \cU'_{n,A_n}$, 
$\R^d \setminus Z_n(\cU'_{n,A_n})$ intersects
with $A^*_n \setminus \tA^{*}_n$, 
and therefore by
(\ref{0126a}), 
includes locations
distant at most  $2K n^{18 \xxi}r_n^2$ from $\Gamma_n$.
Also $\Gamma_n \subset  Z_n (\cU'_{n,A_n}) $,
since $\Gamma_n \subset \tA^{*}_n$.

Pick a location $x \in \overline{A_n^* \setminus Z_n(\cU'_{n,A_n}) } $
of minimal distance from
$\Gamma_n$.  
Then $x \notin Q_n^+$, so
the nearest point in $\Gamma_n$ to $x$ lies in the
interior of $H_{n,i}$ for some $i$.
We claim that $x$ lies at the intersection of the boundaries of
$d$ of the balls making up $Z_n(\cU'_{n,A_n})$;
this is proved similarly to the similar claim concerning
$w$ in the proof of Lemma \ref{lemhalfd}.
Moreover, $x$ lies in at most $k-1$ 
of the other balls making up  $Z_n(\cU'_{n,A_n})$.
Also $x$ does not lie in the interior of $\tA_n^*$.

Thus if $E^{(1)}_n \cap F_n(Q_n^+ , \cU'_{n,A_n})$ occurs, there
must exist $d$ points $(x_1,s_1),(x_2,s_2),\ldots,$ $ (x_d,s_d)$
of $\cU_{n,A_n} $ such that $\cap_{i=1}^d
\partial B(x_i,r_ns_i)$ 
includes a point in $A_n^*$ but  outside the interior of 
$\tA_n^*$, within distance $2Kn^{18 \xxi} r_n^2$ of 
$\Gamma_n$
and in  $B(x,r_n s)$ for at most $k-1$ of the
other points $(x,s)$ of $\cU_{n,A_n}$.
Hence by Markov's inequality and the Mecke formula,
we obtain that
\begin{align}
\Pr[E^{(1)}_n \cap F_n(Q_n^+ , \cU'_{n,A_n})] 
\leq I_{n,1} + I_{n,2}
\label{e:pE1n}
\end{align}
where,
taking $Y_1,\ldots , Y_d$ to
be independent random variables with the distribution of $Y$,
and writing $f_{n,A}(x)$ for $\Pr[x  \in Z_n(\cU'_{n,A})]$ for
all $x \in \R^d$, and recalling the definition of
$q_n(\cdot)$ at \eqref{e:qn},
we set
\begin{align*}
I_{n,1}
: = & n^d \int_{\R^d} \cdots \int_{\R^d}
\E[h_n((x_1,Y_1),(x_1+y_2,Y_2),\ldots,(x_1+y_d,Y_d))
\\
& \times {\bf 1} \{q_n((x_1,Y_1), (x_1+y_2,Y_2),
\ldots,(x_1+y_d,Y_d))
\in A_n^* \cap (\Gamma_n \oplus B(o,2Kn^{18\xxi} r_n^2) )
\}
\\
& \times 
(1-f_{n,A}(q_n( (x_1,Y_1), (x_1+y_2,Y_2),\ldots,(x_1+y_d,Y_d))
))] dy_d \cdots dy_2 dx_1,
\end{align*}
and $I_{n,2}$ is defined similarly with $p_n(\cdot)$ 
replacing $q_n(\cdot)$.
Then changing variables $y_i \mapsto r_n^{-1} y_i$ we have 
\begin{align*}
I_{n,1}
= & n^d r_n^{d(d-1)}
\E \Big[
\int_{\R^d} \cdots \int_{\R^d}
h_n((o,Y_1),(r_n y_2,Y_2),\ldots,(r_n y_d,Y_d))
\\
& \times {\bf 1} \{x_1 + q_n((o,Y_1), (r_n y_2,Y_2),
\ldots,(r_ny_d Y_d))
\in A_n^* \cap (\Gamma_n \oplus B(o,2Kn^{18\xxi} r_n^2))
\}
\\
& \times 
(1-f_{n,A}(x_1 + q_n( (o,Y_1), (r_ny_2,Y_2),\ldots,
(r_ny_d,Y_d))
)) 
dy_d \cdots dy_2 dx_1
\Big].
\end{align*}
By  Lemma \ref{lemcov3}, the last factor of $1-f_{n,A}(\cdot)$
is $O(n^{\eps-1+1/d})$ whenever the indicator function in
the previous factor is 1. Also using Fubini's theorem
we can take the integral over $x_1$ inside all the other
integrals and integrate it out, getting a factor of
$O(n^{18\xxi} r_n^2)$.
Thus we obtain for a
suitable constant $c$ that
\begin{align*}
I_{n,1}
\leq  c n^d r_n^{d(d-1)}
n^{18 \xxi + \eps -1+ 1/d} r_n^2
\E \Big[ \int_{\R^d} \cdots \int_{\R^d}
h((o,r_nY_1),(r_ny_2,r_nY_2),\ldots,(r_ny_d,r_nY_d))
\\
dy_d \cdots dy_2 \Big]
\\
=  c (nr_n^d)^{d-1 + 2/d} n^{18\xxi +\eps-1/d} 
\E \Big[ \int_{\R^d} \cdots \int_{\R^d}
h((o,Y_1),(y_2,Y_2),\ldots,(y_d,Y_d))
dy_d \cdots dy_2 \Big].
\end{align*}
In the last line the expectation is finite by Lemma \ref{l:inth} and
our  moment condition on $Y$. Also $(nr_n^d)^d = O(n^\eps)$ by
\eqref{e:rn}. Thus $I_{n,1}= O(n^{2 \eps + 18 \xxi - 1/d})$ 
so $I_{n,1} \to 0$ as $n\to \infty$,
and by an identical argument the same holds for $I_{n,2}$.
Also
$\Pr[ F_n(Q_n^+ , \cU'_{n,A_n})]  \to 1$ by Lemma \ref{lemcorn3}, so 
using \eqref{e:pE1n} we obtain that
$\Pr[E^{(1)}_n ] \to 0$, as required. 
\end{proof}

\begin{lemm}
\label{lemdiff3}
Let $E^{(2)}_n :=  F_n(A_n^*, \cU'_{n,A_n})  \setminus  F_n(\tA_n^*, \cU'_{n,\tA_n})$.
Then
$\lim_{n \to \infty} \Pr[ E^{(2)}_n]=0$.
\end{lemm}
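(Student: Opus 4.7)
The plan is to mimic the Mecke-formula bound from Lemma \ref{lemdiff4}, with a single key modification: the uncovered point now lies inside $\tA_n^*$ rather than in the shell $A_n^* \setminus \tA_n^*$, and the new ``witness'' that forces the difference between $Z_n(\cU'_{n,\tA_n})$ and $Z_n(\cU'_{n,A_n})$ is an additional Poisson point in $A_n \setminus \tA_n$. Concretely, by Lemma \ref{lemcorn3} one has $\Pr[F_n(Q_n^+, \cU'_{n, \tA_n})] \to 1$, so it suffices to show $\Pr[E_n^{(2)} \cap F_n(Q_n^+, \cU'_{n, \tA_n})] \to 0$.

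If this event occurs, choose $x^*$ of minimal height in the closure of $\tA_n^* \setminus Z_n(\cU'_{n, \tA_n})$. The minimal-height analysis used in the proofs of Lemma \ref{lemhalfd} and Lemma \ref{lemdiff4} furnishes distinct points $\bx_i = (x_i, s_i) \in \cU'_{n, \tA_n}$, $1 \leq i \leq d$, with $h_n(\bx_1, \ldots, \bx_d) = 1$, $x^* = q_n(\bx_1, \ldots, \bx_d)$, at most $k-1$ other balls of $\cU'_{n, \tA_n}$ containing $x^*$, and $x^* \notin Q_n^+$ (so $x^*$ sits over the interior of some face of $\Gamma_n$). Since $F_n(A_n^*, \cU'_{n, A_n})$ also holds, $x^* \in Z_n(\cU'_{n, A_n}) \setminus Z_n(\cU'_{n, \tA_n})$, which forces the existence of at least one $(w, s) \in \cU'_{n, A_n \setminus \tA_n}$ with $x^* \in B(w, r_n s)$.

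The plan is then to apply the Mecke formula to the enlarged $(d+1)$-tuple $(\bx_1, \ldots, \bx_d, (w, s))$. After the usual change of variables $y_i := r_n^{-1}(x_i - x_1)$ for $i \geq 2$, bounding the residual $h$-integrals via Lemma \ref{l:inth} and the moment condition $\E[Y^\gamma]<\infty$, using Lemma \ref{lemcov3} to supply the Poisson factor $O(n^{\eps - 1 + 1/d})$ for the ``at most $k-1$ other balls'' event, integrating $x_1$ over $\tA_n^*$ (volume $O(n^\xxi r_n)$), and finally integrating $w$ over $A_n \setminus \tA_n$ using Lemma \ref{corotaylor}(b) to obtain $\int_{A_n \setminus \tA_n} \mathbf{1}_{B(w, r_n s) \ni q_n}\, dw = O(s^{d-1} n^{18\xxi} r_n^{d+1})$, one should arrive at a bound of the shape
\[
\Pr[E_n^{(2)} \cap F_n(Q_n^+, \cU'_{n, \tA_n})] = O\!\left(n^{19\xxi + \eps - 1/d}\right)\cdot (\log n)^{O(1)}.
\]
The constraint $19\xxi < 1/d$ implied by \eqref{eqgamma} makes this vanish, and Markov's inequality then finishes the proof.

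The main obstacle will be the bookkeeping in the $(d+1)$-tuple Mecke bound: one must balance the factor $n$ from the additional Poisson intensity against the small volume $O(n^{18\xxi} r_n^{d+1})$ supplied by Lemma \ref{corotaylor}(b) and the $O(n^\xxi r_n)$ from integrating $x_1$ over $\tA_n^*$, ensuring that the combined power of $n$ (beyond the existing Mecke bound from Lemma \ref{lemdiff4}) decays. Quantitatively, compared with Lemma \ref{lemdiff4}, the extra contribution is a factor of order $n \cdot n^\xxi r_n \cdot n^{18\xxi} r_n^{d+1} / (n^{18\xxi} r_n^2) = n^{1+\xxi} r_n^d = O(n^\xxi \log n)$, so the gain in the exponent of $n$ relative to \ref{lemdiff4} is only $\xxi$, which is precisely why the constant $\xxi$ was chosen at \eqref{eqgamma} to be safely smaller than $1/(19d)$.
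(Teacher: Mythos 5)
Your approach diverges substantially from the paper's, and while its broad outline could be made to work, it is considerably heavier machinery than is actually used. The paper's proof of Lemma \ref{lemdiff3} contains no Mecke-formula calculation and no minimal-height / corner analysis. Instead it observes that if $E_n^{(2)}$ occurs, then some point of $\tA_n^*$ is covered $k$ times by $\cU'_{n,A_n}$ but fewer than $k$ times by $\cU'_{n,\tA_n}$, so at least one of the $k$ covering balls has its center in $\cQ_n := \cU'_{n,A_n}\setminus\cU'_{n,\tA_n}$ (supported on the thin slab $A_n\setminus\tA_n$). Thus $E_n^{(2)}\subset F_n\bigl(\bigcup_{(x,t)\in\cQ_n}B(x,r_nt)\cap\tA_n^*,\ \cU'_{n,\tA_n}\bigr)^c$. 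Conditioning on $\cQ_n$ (which is independent of $\cU'_{n,\tA_n}$), the probability that the $\tA_n^*$-intersection of the $m$ balls of $\cQ_n$ is not fully covered is, by Lemma \ref{lemcov3} and the union bound, at most $cmn^{d\xi}n^{\eps-(d-1)/d}$; combining this with $\E[N_n]=O(n^{1+18\xi}r_n^2)$ from Lemma \ref{corotaylor}(a) yields the estimate $O(n^{(18+d)\xi+2\eps-1/d})\to 0$. This is a first-moment argument, not a Mecke $d$-tuple argument, and is quite a bit shorter.

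Regarding your route: the Mecke bound over an enlarged $(d+1)$-tuple can in principle be made to close, and your heuristic exponent accounting ($n^{19\xi+\eps-1/d}$ up to logs) is consistent with a careful computation. However, as written your argument leaves a nontrivial gap in the minimal-height step. You invoke "the minimal-height analysis used in Lemma \ref{lemhalfd} and Lemma \ref{lemdiff4}" to obtain the corner structure at $x^*\in\overline{\tA_n^*\setminus Z_n(\cU'_{n,\tA_n})}$, but that analysis requires the relevant bottom and side boundaries of $\tA_n^*$ to already be $k$-covered by $Z_n(\cU'_{n,\tA_n})$ (not merely by $Z_n(\cU'_{n,A_n})$). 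You handle the sides via $F_n(Q_n^+,\cU'_{n,\tA_n})$, but you do not address the bottom. To close that gap one must observe that points of $\cQ_n$ lie at height within $O(n^{18\xi}r_n^2)$ above $\Gamma_n$ and carry marks at most $n^\xi$, so their balls (radius at most $n^\xi r_n$) cannot reach the bottom of $\tA_n^*$, which sits $(3/2)n^\xi r_n$ below $\Gamma_n$; hence the bottom is covered by $\cU'_{n,\tA_n}$ alone on $E_n^{(2)}$. Without this the corner structure, and hence the very tuple you apply Mecke to, is not justified. Once you notice that the problem reduces to the smallness of $A_n\setminus\tA_n$ and the independence of $\cQ_n$ from $\cU'_{n,\tA_n}$, the paper's first-moment bound is the natural and far more economical conclusion.
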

\begin{proof}
%
If the event $E^{(2)}_n$ occurs, then
since $\tA^{*}_n \subset A_n^*$, 
the set
$ \tA^{*}_n \cap 
Z_n(\cU'_{n,A_n}) \setminus Z_n(\cU'_{n,\tA_n})$ is nonempty.
Hence there exists $(x,s) \in \cU'_{n,A_n} \setminus
\cU'_{n,\tA_n}$ with $B(x,r_n s) \cap
\tA^{*}_n 
\setminus
Z_n(\cU'_{n,\tA_n}) 
\neq 
\emptyset$.
Therefore  
\bea
E^{(2)}_n \subset  F_n(
Z_n(\cU'_{n,A_n} \setminus \cU'_{n,\tA_n})
\cap \tA_n^*
,   \cU'_{n,\tA_n})^c .
\label{0210a}
\eea

Let $\eps \in (0,(1/(2d))- (9 +d/2)\xxi)$.
Let
$ \cQ_n: = \cU'_{n,A_n} \setminus \cU'_{n,\tA_n}$. Then
$\cU'_{n,\tA_n} $ and $\cQ_n$ are independent Poisson processes
with intensity measures $n {\rm Leb}_d \otimes \mu_Y$
in $\tA_n \times [0,n^\xxi]$, $(A_n \setminus \tA_n) \times [0,n^\xxi]$
respectively.
By Lemma \ref{lemcov3} 
and the union bound,
there is a constant $c$ such that for
any $m \in \N$ and any set of $m$ points $(x_1,t_1),\ldots,(x_m,t_m)$
in $\R^d \times [0,n^\xxi]$, we have
\bean
\Pr \left[ F_n( \cup_{i=1}^m
B(x_i,r_n t_i) \cap \tA_n^* ,\cU'_{n,\tA_n} )^c  
\right]  
\leq c m n^{d\xxi} n^{\eps - (d-1)/d} .
\eean
Let $N_n := \cQ_n(\R^d \times [0,n^\xxi])$.
By Lemma \ref{corotaylor}(a), $\E[ N_n] = O(n^{1+ 18 \xxi} r_n^2  )$, so that
by conditioning on $\cQ_n$ we have 
\begin{align*}
\Pr[ F_n( \cup_{(x,t) \in \cQ_n }
B(x,r_nt) \cap  \tA_n^* , \cU'_{n,\tA_n} )^c] 
& \leq c n^{\eps + d \xxi - 1 +1/d} \E[N_n]
\\
& = O( n^{ (18 +d) \xxi  +  2 \eps - 1/d }),
\end{align*}
which tends to zero by the choice of $\eps$.
Hence by (\ref{0210a}), $\Pr[E^{(2)}_n ] \to 0$.
\end{proof}

To complete the proof of Theorem \ref{thm3d},
we shall break $\partial A$ into finitely many pieces, with each piece
contained in a single chart. We would like to write the
probability that all of $\partial A$ is covered as
the product of probabilities for each piece, but to
achieve the independence needed for this, we need to remove a
region near the boundary of each piece. By separate
estimates we can show the removed regions are covered with
high probability, and this is the content of the next lemma.

With $\Gamma$ and $\partial \Gamma$ as in Section 
\ref{secfirststeps},
define the sets
$
\DG_n := \partial \Gamma \oplus B(o, n^{29\xxi}r_n)
$
and
$
\DG_n^{+} := \partial \Gamma \oplus B(o,  n^{49 \xxi}r_n).
$

\begin{lemm}
\label{dBlem}
It is the case that 
$\lim_{n \to \infty}
F_n( \DG_n^{+} \cap A, \cU'_ {n,A}) =1$. 
\end{lemm}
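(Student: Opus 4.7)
The plan is to bound the probability of the complementary event by a union bound, after covering $\Delta_n^+ \cap A$ by a controlled number of balls of radius $r_n$, to each of which the last coverage estimate in Lemma~\ref{lemcov3} applies.

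First I would use the hypothesis $\kappa(\partial \Gamma, r) = O(r^{2-d})$ (imposed on $\Gamma$ at the start of Section~\ref{secfirststeps}) to cover $\partial \Gamma$ by $O((n^{49 \xxi} r_n)^{2-d})$ balls of radius $n^{49 \xxi} r_n$. Replacing each such ball by the concentric ball of twice the radius gives a cover of $\Delta_n^+$ by $O((n^{49 \xxi} r_n)^{2-d})$ balls of radius $2 n^{49 \xxi} r_n$. Each of these larger balls has diameter $O(n^{49 \xxi} r_n)$, and since it sits in a hypercube of that scale it can be covered by $O(n^{49 d \xxi})$ balls of radius $r_n$. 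Multiplying, one obtains points $z_{n,1}, \ldots, z_{n, M_n} \in \R^d$ with
\begin{align*}
    M_n = O\bigl( (n^{49 \xxi} r_n)^{2-d} \cdot n^{49 d \xxi} \bigr) = O\bigl( n^{98 \xxi} r_n^{2-d} \bigr),
    \qquad
    \Delta_n^+ \subset \bigcup_{i=1}^{M_n} B(z_{n,i}, r_n).
\end{align*}

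Next I apply \eqref{0816c} from Lemma~\ref{lemcov3} with $K_1 = 1$ and, say, $\eps = \xxi$. By the union bound,
\begin{align*}
    \Pr[F_n(\Delta_n^+ \cap A,\, \cU'_{n,A})^c]
    \leq \sum_{i=1}^{M_n} \Pr[F_n(B(z_{n,i}, r_n) \cap A,\, \cU'_{n,A})^c]
    = O\bigl( n^{98 \xxi} r_n^{2-d} \cdot n^{\xxi - (d-1)/d} \bigr).
\end{align*}
Since $n r_n^d = O(\log n)$ by \eqref{e:trho}, one has $r_n^{2-d} = O((\log n)^{(d-2)/d} n^{(d-2)/d})$ for $d \geq 3$, and $r_n^{2-d} = 1$ for $d=2$, so the bound above is $O(n^{99 \xxi - 1/d} (\log n)^{(d-2)/d})$. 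Our standing choice \eqref{eqgamma} of $\xxi < 1/(198 d (18+d))$ forces $99 \xxi < 1/(2d)$, and hence the bound tends to zero as $n \to \infty$.

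The main obstacle, and the reason the large exponent $49$ was built into the definition of $\Delta_n^+$, is ensuring that $M_n$ is small enough to beat the individual failure probability $n^{\eps - (d-1)/d}$. The budget was fixed by the choice of $\xxi$ in \eqref{eqgamma}, which was made precisely so that exponents like $99 \xxi$ still fit comfortably below $1/d$; once this bookkeeping is in place the argument is a routine union bound over a $\delta$-net for the lower-dimensional set $\partial \Gamma$.
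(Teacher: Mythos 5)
Your proposal is correct and uses essentially the same argument as the paper: cover $\partial\Gamma$ at scale $n^{49\xxi}r_n$ using the assumption $\kappa(\partial\Gamma,r)=O(r^{2-d})$, inflate to cover $\DG_n^+$, refine to balls of radius $r_n$ producing $O(n^{98\xxi}r_n^{2-d})$ of them, and close with a union bound via \eqref{0816c}. One small slip: from $r_n = \Theta(((\log n)/n)^{1/d})$ you get $r_n^{2-d} = \Theta(n^{(d-2)/d}(\log n)^{(2-d)/d})$, so the exponent on $\log n$ should be $(2-d)/d$ (which is $\leq 0$ for $d\geq 2$), not $(d-2)/d$; your version is a weaker but still valid upper bound, and the polynomially decaying factor $n^{99\xxi-1/d}$ dominates any power of $\log n$ anyway, so the conclusion stands.
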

\begin{proof}
Let $\eps \in (0,(1/d) - 98\xxi  )$.
Since we assume  $\kappa(\partial \Gamma,r)= O(r^{2-d})$
as $r \downarrow 0$,
for each $n $
we can take $x_{n,1},\ldots,x_{n,k(n)} \in \R^d$ with
$\partial \Gamma \subset \cup_{i=1}^{k(n)}
B(x_{n,i},n^{49\xxi}r_n)$, and
with $k(n) = O((n^{49 \xxi} r_n)^{2-d})$.
Then $\DG_n^{+} \subset \cup_{i=1}^{k(n)} B(x_{n,i},2n^{49\xxi}r_n)$.
For each $i \in \{1,\ldots, k(n)\}$, we can cover
the ball 
$B(x_{n,i},2 n^{49 \xxi}r_n)$ 
with $O(n^{49d\xxi})$ smaller balls of radius $r_n$.
Then we end up with  balls of
radius $r_n$, denoted $B_{n,1},\ldots,B_{n,m(n)}$ say,
such that $\DG_n^{+} \subset \cup_{i=1}^{m(n)} B_{n,i}$ and
$m(n) = 
O(r_n^{2-d} n^{49\xxi(3-d)}) =
O(r_n^{2-d} n^{98\xxi})$.
By (\ref{0816c}) from  Lemma
\ref{lemcov3}, and the union bound, 
\bean
\Pr[ \cup_{i=1}^{m(n)} ( F_{n}(B_{n,i} \cap A, \cU'_{n,A})^c)] 
= O( r_n^{2-d} n^{98 \xxi + \eps - 1 + 1/d})
= O(  n^{   98 \xxi + \eps - 1/d} ),
\eean
which tends to zero.
\end{proof}

Given $n >0$, define the sets
$
\Gamma^{(n^{29 \xxi}r_n)}:= \Gamma \setminus \DG_n
$ and
$$
\Gamma^{(n^{29 \xxi}r_n)}_{n^\xxi r_n} := (\Gamma^{(n^{29 \xxi}r_n)} 
\oplus B(o, n^\xxi  r_n))
\cap A;
~~~~~
\Gamma_{n^\xxi r_n} := (\Gamma \oplus B(o,n^\xxi r_n)) \cap A,
$$
and define the event $F_n^\Gamma:=
F_n(\Gamma^{(n^{29 \xxi}r_n)}_{n^\xxi r_n} , \cU'_{n,A} )$.

Note that the definition of $F_n^\Gamma$ does not depend on the 
choice of chart. This 
fact
will be needed for the last stage
of the proof of Theorem \ref{thm3d}.
Lemma \ref{lemcap} below shows that
$\Pr[F_n^\Gamma]$ is well approximated by $\Pr[F_n(A_n^*, \cU'_{n,A_n})]$ and
we have already determined the limiting behaviour of the latter.
We prepare for the proof of Lemma \ref{lemcap} with two
geometrical lemmas.

\begin{lemm}
\label{lemcapA}
For all large enough $n$,
it is the case that $\Gamma_{n^\xxi r_n}^{(n^{29 \xxi}r_n)}  \subset 
A_n^*$.
\end{lemm}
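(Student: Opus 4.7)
The plan is to take an arbitrary point $x \in \Gamma^{(n^{29 \xxi}r_n)}_{n^\xxi r_n}$ and verify directly the three conditions defining $A_n^*$ in \eqref{Pdtdef}. By definition of $\Gamma^{(n^{29 \xxi}r_n)}_{n^\xxi r_n}$, we have $x \in A$ and there exists $y \in \Gamma^{(n^{29 \xxi}r_n)}$ with $\|x - y\| \leq n^\xxi r_n$. Parametrise $y = (v,\phi(v))$ with $v \in U$, and write $x = (w,z)$ with $w \in \R^{d-1}$, $z \in \R$, so $\|w-v\| \leq n^\xxi r_n$ and $|z - \phi(v)| \leq n^\xxi r_n$. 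We need to show (i) $w \in U_n^-$, (ii) $z \leq \phi(w)$, and (iii) $\phi_n(w) - (3/2) n^\xxi r_n \leq z$.

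For the first step I would convert the hypothesis $\dist_{\R^d}(y,\partial \Gamma) > n^{29 \xxi} r_n$ into a hypothesis on the projection $v$. Any point of $\partial U$ (relative to $\R^{d-1}$) gives rise via the parametrisation to a point of $\partial \Gamma$, and the Lipschitz-type bound \eqref{philip} gives $\|(v,\phi(v))-(v',\phi(v'))\|^2 \leq (82/81)\|v - v'\|^2$ for $v' \in U$. It follows that $\dist_{\R^{d-1}}(v,\R^{d-1}\setminus U) \geq (9/\sqrt{82})n^{29\xxi}r_n > n^{28\xxi}r_n$ for large $n$. Consequently $\dist_{\R^{d-1}}(w,\R^{d-1}\setminus U) > n^{28 \xxi} r_n - n^\xxi r_n > n^{27 \xxi} r_n$ for large $n$. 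Since each simplex of the triangulation has diameter $O(n^{9\xxi} r_n)$, the simplex $\sigma$ containing $w$ lies inside $B_{(d-1)}(w, c\, n^{9 \xxi} r_n)$ for some constant $c$, and is therefore contained in $U^{(n^{10\xxi}r_n)}$ for all sufficiently large $n$; hence $\sigma \subset U_n^-$, proving (i).

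For (ii), note that since $\phi(v) \in [\eps_1,2\eps_1]$ and $|z - \phi(v)| \leq n^\xxi r_n$, for large $n$ we have $0 \leq z \leq 3\eps_1$, and similarly $w \in U \subset I_1$ (using step 1). Hence $x = (w,z) \in A \cap (U \times [0,3\eps_1])$ is of the required form, giving $z \leq \phi(w)$ by \eqref{0901a}. For (iii), combine three estimates: $z \geq \phi(v) - n^\xxi r_n$ by choice of $x,y$; the local Lipschitz bound \eqref{philip} gives $|\phi(v) - \phi(w)| \leq (1/9)\|v-w\| \leq (1/9) n^\xxi r_n$; and \eqref{0126a} gives $|\phi(w) - \phi_n(w)| \leq 2 K n^{18\xxi}r_n^2$. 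Since \eqref{e:trho} yields $r_n = O((\log n /n)^{1/d})$ and \eqref{eqgamma} implies $17 \xxi < 1/d$, we have $n^{18\xxi}r_n^2 = o(n^\xxi r_n)$, so for large $n$ the three estimates combine to give $z \geq \phi_n(w) - (1 + 1/9 + o(1)) n^\xxi r_n > \phi_n(w) - (3/2) n^\xxi r_n$, as required.

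The only real subtlety is the first step, where one must check that the boundary $\partial \Gamma$ in $\R^d$ genuinely reflects the boundary of the parameter domain $U$ in $\R^{d-1}$; once one passes from $\R^d$-distances to $\R^{d-1}$-distances, the rest is bookkeeping among the length scales $n^\xxi r_n$, $n^{9\xxi} r_n$, $n^{10\xxi}r_n$, $n^{18\xxi}r_n^2$, $n^{27\xxi}r_n$ and $n^{29\xxi} r_n$, all of which have been arranged to leave comfortable margin by the choice of $\xxi$ in \eqref{eqgamma}.
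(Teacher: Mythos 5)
Your proof is correct and follows essentially the same strategy as the paper: pick a nearby point $y\in\Gamma^{(n^{29\xxi}r_n)}$, transfer the $\R^d$-distance condition on $y$ to a $\R^{d-1}$-distance condition on its projection via the Lipschitz bound \eqref{philip}, deduce that the projection of $x$ lies in $U_n^-$ because the triangulation has mesh $O(n^{9\xxi}r_n)$, and then verify the height constraints using \eqref{philip} and \eqref{0126a}. You actually supply two small steps that the paper leaves implicit — that one must invoke $x\in A$ together with \eqref{0901a} to get the upper bound $z\leq\phi(w)$ (the paper's estimate $|s-\phi_n(v)|\leq(3/2)n^\xxi r_n$ does not by itself give this), and that membership of the projection in $U_n^-$ requires the whole simplex to sit inside $U^{(n^{10\xxi}r_n)}$ — so your write-up is, if anything, slightly more complete than the original.
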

\begin{proof}
Let $x \in \Gamma^{(n^{29\xxi}r_n)}_{n^\xxi r_n} $, and
take $y \in \Gamma^{(n^{29\xxi}r_n)}$ with
$\|x-y\| \leq n^\xxi r_n$.  
Writing $y = (u,\phi(u))$ with
$u \in U$,  we claim that
$\dist(u, \partial U) \geq (1/2)n^{29 \xxi}r_n$. Indeed, if 
we had	$\dist(u, \partial U) < (1/2)n^{29 \xxi}r_n$, then we could
take $w \in \partial U$ with
$\|u -w \|  < (1/2)n^{29 \xxi}r_n$.
Then $(w,\phi(w)) \in \partial \Gamma$ and 
by (\ref{philip}), $|\phi(w) - \phi(u)| \leq (1/4) n^{29 \xxi}r_n$, so 
$$
\|(u,\phi(u)) - (w,\phi(w)) \| \leq \|u-w\| + |\phi(u)-\phi(w)|
\leq (3/4)n^{29 \xxi}r_n,
$$
contradicting the assumption that 
$y \in \Gamma^{(n^{29 \xxi}r_n)}$, so the claim is justified.

Writing $x = (v,s)$ with $v \in \R^{d-1}$,
and $s \in \R$, we have $\|v-u\| \leq \|x-y\| \leq n^\xxi  r_n$, so 
$\dist(v, \partial U) \geq (1/2)n^{29\xxi}r_n - n^\xxi r_n$, and hence
$v \in U_n^-$, provided $n$ is big enough
($U_n^-$ was defined shortly after (\ref{eqgamma}).)   Also 
$|\phi(v) - \phi(u)| \leq  n^\xxi r_n/4$ by (\ref{philip}),
so  $|\phi_n(v) - \phi(u)| \leq n^\xxi r_n/2$, provided $n$ is
big enough, by (\ref{0126a}).
Also
$|s - \phi(u)| \leq \|x-y\| \leq  n^\xxi r_n$, so 
$
|s - \phi_n(v)| \leq (3/2) n^\xxi r_n.
$
Therefore $x \in A^*_n$ by (\ref{Pdtdef}). 
\end{proof}
\begin{lemm}
\label{lemBA}
For all large enough $n$, we have
(a) $[A_n^* \oplus B(o,4 n^\xxi r_n)] \cap A \subset A_n$, 
and (b) 
$[ A_n^* \oplus B(o,4 n^\xxi r_n)]  
\cap \partial A \subset  \Gamma$,
and (c)
$[ \Gamma_{n^\xxi r_n}^{(n^{29\xxi}r_n)} \oplus B(o,4 n^\xxi r_n)]  
\cap \partial A \subset  \Gamma$.
\end{lemm}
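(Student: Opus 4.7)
}
The plan is to treat (a) as the main geometric step, then derive (b) by invoking the chart description of $\partial A$, and finally derive (c) as an immediate corollary of (b) combined with Lemma \ref{lemcapA}.

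For part (a), I would take $y \in (A_n^* \oplus B(o, 4n^\xxi r_n)) \cap A$ and write $y = x + w$ with $x = (v,s) \in A_n^*$ (so $v \in U_n^-$ and $\phi_n(v) - (3/2) n^\xxi r_n \leq s \leq \phi(v)$) and $\|w\| \leq 4 n^\xxi r_n$. Splitting $w = (w_1, w_2)$, set $v' := v + w_1$. First I would show $v' \in U_n$: since $v \in U_n^-$ lies in a simplex contained in $U^{(n^{10 \xxi} r_n)}$, we have $v \in U^{(n^{10\xxi} r_n)}$, hence $v' \in U^{(n^{10\xxi} r_n - 4 n^\xxi r_n)}$; for large $n$ this remainder exceeds the simplex diameter $O(n^{9\xxi} r_n)$, so the simplex through $v'$ is entirely inside $U$, giving $v' \in U_n$. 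Next I would verify that the height $s + w_2$ lies in $[0, 3\eps_1]$: combining the definition of $A_n^*$ with \eqref{0126a} gives $s \in [\phi(v) - O(n^\xxi r_n), \phi(v)]$ (using $n^{17\xxi} r_n = o(1)$, which is ensured by \eqref{eqgamma}), and $\phi(v) \in [\eps_1, 2\eps_1]$ by \eqref{0901b}, so adding $w_2$ keeps the height well inside $[0, 3\eps_1]$ for large $n$. Then $y \in U \times [0, 3\eps_1]$ together with $y \in A$ yields via \eqref{0901a} that $\pi_d(y) \leq \phi(v')$, whence $y \in A_n$.

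For part (b), I would run the same argument to obtain $v' \in U_n \subset U \subset I_1$ together with the height bound. The additional ingredient is that $A_n^* \subset \Gamma \oplus B(o, O(n^\xxi r_n))$ (from the estimate $|s - \phi(v)| = O(n^\xxi r_n)$ used above), so $y$ lies within $O(n^\xxi r_n)$ of $\Gamma \subset B(x_1, r(x_1))$, and hence $y \in B(x_1, 2 r(x_1)) \subset \NN_{x_1}$ for large $n$. The chart description then forces $\partial A \cap \NN_{x_1} = \{(u, \phi(u)): u \in I_1\}$, so $y = (v', \phi(v'))$; since $v' \in U$, we conclude $y \in \Gamma$.

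For part (c), Lemma \ref{lemcapA} gives $\Gamma^{(n^{29\xxi} r_n)}_{n^\xxi r_n} \subset A_n^*$ for all large $n$; enlarging both sides by $B(o, 4 n^\xxi r_n)$ and intersecting with $\partial A$, the containment follows at once from part (b). The main technical obstacle, modest though it is, lies in verifying $v' \in U_n$ in part (a): showing $v' \in U$ is easy, but landing in the union of simplices \emph{wholly contained in $U$} relies on the engineered hierarchy of length scales $4 n^\xxi r_n \ll n^{9\xxi} r_n \ll n^{10\xxi} r_n$, which is precisely why $U_n^-$ was defined using the $n^{10\xxi} r_n$-inner parallel set. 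The remainder of the argument consists of unpacking the local chart description together with the definitions.
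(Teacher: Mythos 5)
Your proposal is correct and follows essentially the same route as the paper's proof: decompose $y$ into its $\R^{d-1}$-projection and height, land the projection in $U_n$, bound the height using \eqref{0126a} and \eqref{philip}, and invoke \eqref{0901a}--\eqref{0901b} to deduce membership in $A_n$ (resp.\ $\Gamma$), with (c) following from Lemma~\ref{lemcapA}. You actually spell out two points that the paper leaves implicit — the scale-hierarchy argument explaining why the shifted projection remains in $U_n$ (via $U_n^- \subset U^{(n^{10\xi}r_n)}$ and the $O(n^{9\xi}r_n)$ simplex diameter), and the explicit appeal to the chart neighbourhood $\NN_{x_1}$ to identify $\partial A$ locally as the graph of $\phi$ in part (b) — so the proposal is, if anything, slightly more detailed than the source.
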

\begin{proof}
Let
$x \in A_n^*$.
Write  $x = (u,z)$ with $u \in U_n^-$ and $\phi_n(u) - 3 n^\xxi
r_n/2 \leq z \leq \phi(u)$.

Let $y \in B(x,4 n^\xxi r_n) \cap A$,
and write $y = (v,s)$ with $v \in \R^{d-1}$
and $s \in \R$. Then $\|v-u\| \leq 4 n^\xxi r_n$ so provided $n$ is
big enough, $v \in U_n$. Also $|s-z| \leq 4 n^\xxi r_n$, and
$|\phi(v) - \phi(u)| \leq  n^\xxi r_n$
by (\ref{philip}), so 
$$
|s - \phi(v)| \leq |s - z| + |z - \phi(u)| + |\phi(u)- \phi(v)|
\leq 4 n^\xxi r_n + 2 n^\xxi r_n +  n^\xxi r_n,
$$
and since $y \in A$, by (\ref{0901b}) and (\ref{0901a}) 
we must have $ 0 \leq s \leq \phi(v) $, provided $n$ is big enough.
Therefore $y = (v,s) \in A_n$, which gives us (a).

If also $y \in \partial A$, then $\phi(v)=s$, so 
$y \in \Gamma$. Hence 
we have part (b). 
Then by Lemma \ref{lemcapA} we also have part (c).
\end{proof}
\begin{lemm}
\label{lemcap}
It is the case that 
$\Pr[F_n^\Gamma \triangle F_n(A_n^*, \cU'_{n,A_n})] \to 0$
as $n \to \infty$.
\end{lemm}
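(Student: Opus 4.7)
My plan is to bound the two halves of the symmetric difference $F_n^\Gamma \triangle F_n(A_n^*, \cU'_{n,A_n})$ separately. The direction $F_n(A_n^*, \cU'_{n,A_n}) \setminus F_n^\Gamma$ is immediate: by Lemma~\ref{lemcapA} we have $\Gamma^{(n^{29\xxi}r_n)}_{n^\xxi r_n} \subset A_n^*$, and trivially $\cU'_{n,A_n} \subset \cU'_{n,A}$, giving $Z_n(\cU'_{n,A_n}) \subset Z_n(\cU'_{n,A})$; hence $F_n(A_n^*, \cU'_{n,A_n}) \subset F_n^\Gamma$, so this part contributes zero probability.

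For the harder direction I would first establish the deterministic equality $F_n(A_n^*, \cU'_{n,A_n}) = F_n(A_n^*, \cU'_{n,A})$ for all sufficiently large $n$. Given $x = (u, z_x) \in A_n^*$ (so $u \in U_n^- \subset U^{(n^{10\xxi}r_n)}$ and, via \eqref{0901b} and \eqref{0126a}, $z_x \geq \eps_1/2$ for large $n$) and any $(z, t) \in \cU'_{n,A}$ with $x \in B(z, r_n t)$, I claim that $(z, t) \in \cU'_{n,A_n}$. Writing $z = (v, w)$, the inequality $\|z-x\| \leq r_n t \leq r_n n^\xxi$ together with the scale separation $r_n n^\xxi + n^{9\xxi} r_n \ll n^{10\xxi} r_n$ forces every simplex of the tessellation meeting $B_{d-1}(v, n^{9\xxi} r_n)$ to lie inside $U$, so $v \in U_n$; moreover $w \in [\eps_1/4, 3\eps_1]$ for large $n$, so by the local chart description~\eqref{0901a} we have $z \in A_n$. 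Hence every ball of $\cU'_{n,A}$ that can meet $A_n^*$ already belongs to $\cU'_{n,A_n}$, which gives $A_n^* \cap Z_n(\cU'_{n,A}) = A_n^* \cap Z_n(\cU'_{n,A_n})$.

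It then suffices to show $\Pr[F_n^\Gamma \setminus F_n(A_n^*, \cU'_{n,A})] \to 0$. On this event, $\Gamma^{(n^{29\xxi}r_n)}_{n^\xxi r_n} \subset Z_n(\cU'_{n,A})$ but some $x \in A_n^*$ is uncovered, so $x \in A_n^* \setminus \Gamma^{(n^{29\xxi}r_n)}_{n^\xxi r_n} \subset (\DG_n^+ \cap A) \cup V_n$, where $V_n := (A_n^* \setminus \Gamma^{(n^{29\xxi}r_n)}_{n^\xxi r_n}) \setminus \DG_n^+$. The piece inside $\DG_n^+$ is handled by Lemma~\ref{dBlem}. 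For $V_n$: since $\|x-(u,\phi(u))\| \leq 2 n^\xxi r_n$ for large $n$, the condition $x \notin \DG_n^+$ forces $(u,\phi(u)) \in \Gamma^{(n^{29\xxi}r_n)}$, and then $x \notin \Gamma^{(n^{29\xxi}r_n)}_{n^\xxi r_n}$ forces $\phi(u) - z_x > n^\xxi r_n$. Combining with the Lipschitz bound~\eqref{philip} yields $\dist(x, \partial A) \geq c\, n^\xxi r_n$ for some absolute $c > 0$.

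Since $V_n$ is a deep interior region, for $x \in V_n$ and $t$ up to order $n^\xxi$ the ball $B(x, r_n t)$ lies fully inside $A$, so the Poisson mean of $\cU'_{n,A}$-balls covering $x$ is at least $(1-o(1)) n \omega_d r_n^d \E[Y^d] \sim (2-2/d) \log n$ by~\eqref{e:rn}. An interior analog of Lemma~\ref{lemcov3}, proved identically but replacing the factor $\omega_d/2$ by $(1-o(1)) \omega_d$, then gives $\sup_{z} \Pr[F_n(B(z, K_1 r_n) \cap V_n, \cU'_{n,A})^c] = O(n^{\delta - (2-2/d)})$ for any $\delta > 0$. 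Covering $V_n$ by $O(|A_n^*|/r_n^d) = O(n^\xxi r_n^{1-d})$ such balls and applying a union bound yields $\Pr[V_n \not\subset Z_n(\cU'_{n,A})] = O(n^{\xxi + \delta - (d-1)/d})$, which tends to $0$ by our choice of $\xxi$. The main obstacle is calibrating the nested length scales $r_n \ll r_n n^\xxi \ll n^{10\xxi}r_n \ll n^{29\xxi}r_n \ll n^{49\xxi}r_n$ so that all the geometric inclusions hold exactly and the union bound on $V_n$ beats the volume of the exceptional set.
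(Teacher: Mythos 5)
Your proposal is correct and follows the same overall decomposition as the paper: the direction $F_n(A_n^*, \cU'_{n,A_n}) \subset F_n^\Gamma$ is immediate from Lemma~\ref{lemcapA}; the ``deterministic equality'' you prove from the chart geometry is exactly what the paper extracts from Lemma~\ref{lemBA}(a); and you both split the remaining region into the piece near $\partial \Gamma$, handled by Lemma~\ref{dBlem}, and a deep-interior piece. The genuine divergence is how that deep-interior piece is dispatched. You introduce an interior analogue of Lemma~\ref{lemcov3} (with exponent $2-2/d$ rather than $(d-1)/d$, reflecting the full ball-volume $\omega_d$ instead of $\omega_d/2$) and then do a union bound over $O(n^\xxi r_n^{1-d})$ balls of radius $\Theta(r_n)$. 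The paper instead observes that the offending $x$ satisfies $n^\xxi r_n < \dist(x,\partial A) \leq 2n^\xxi r_n$, so $x \in A^{(n^\xxi r_n)} \setminus A^{[\eps]}$ for any fixed small $\eps$, and invokes Lemma~\ref{lemHall} on $A \setminus A^{[\eps]}$, sending $\eps \to 0$ at the end. Your route buys a cleaner quantitative bound at the cost of proving a new (if easy) variant of Lemma~\ref{lemcov3}; the paper's route re-uses Lemma~\ref{lemHall} as is, at the cost of an extra $\eps$-limit. Two remarks to tighten up your write-up: (i) in claiming $\dist(x,\partial A) \geq c\,n^\xxi r_n$ you control distance to $\Gamma$ via \eqref{philip}, but you also need something like Lemma~\ref{lemBA}(b) to rule out proximity to the rest of $\partial A$; (ii) because $V_n$ only sits at distance $\gtrsim n^\xxi r_n$ from $\partial A$ while the $\cU'_{n,A}$-balls have radii up to $n^\xxi r_n$, your interior volume estimate should restrict the integral over the mark $s$ to $s \leq c\,n^\xxi$ for some fixed $c<1$ (this suffices since $\E[Y^d \1_{\{Y\leq c\,n^\xxi\}}] \to \E[Y^d]$), rather than asserting that every ball lies fully inside $A$.
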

\begin{proof}
Since
$\Gamma_{n^\xxi r_n}^{(n^{29\xxi}r_n)}  \subset A_n^*$
by Lemma \ref{lemcapA},
and moreover $\cU'_{n,A_n} \subset \cU'_{n,A}$, it  follows that
$F_n(A_n^*,\cU'_{n,A_n}) \subset
F_n(\Gamma^{(n^{29\xxi}r_n)}_{n^\xxi r_n} , \cU'_{n,A}) = F_n^\Gamma$. 
Therefore it suffices to prove that
\bea
\Pr[F_n(\Gamma^{(n^{29\xi}r_n)}_{n^\xxi r_n} , \cU'_{n,A})
\setminus
F_n(A_n^*,\cU'_{n,A_n}) ] \to 0.  
\label{0830a}
\eea

Let $\eps >0$. Suppose event
$F_n(\Gamma^{(n^{29\xi}r_n)}_{n^\xxi r_n} , \cU'_{n,A}) \cap 
F_n( \DG_n^{+} \cap A, \cU'_{n,A}) \setminus F_n(A_n^*,\cU'_{n,A_n})$
occurs. Choose $x \in A_n^* \setminus Z_n(\cU'_{n,A_n})$.
Then 
by Lemma \ref{lemBA}(a), $B(x,n^\xxi r_n) \cap A \subset A_n $.
Hence $\cU'_{n,A} \cap ( B(x, n^\xxi r_n)
\times \R_+) \subset \cU'_{n,A_n}$, and therefore
$x \notin Z_n(\cU'_{n,A})$. 

Since we are assuming $
F_n(\Gamma^{(n^{29\xxi}r_n)}_{n^\xxi r_n} , \cU'_{n,A}) $ occurs,
we therefore have $\dist(x, \Gamma^{(n^{29 \xxi}r_n)} ) > n^\xxi r_n $. 
Since we also assume 
$F_n(\DG_n^{+} \cap A,\cU'_{n,A})$, we also have $\dist(x, \partial \Gamma) \geq 
n^{49 \xxi}r_n$ and therefore 
$\dist (x,\DG_n) =
\dist (x, \partial \Gamma \oplus B(o,n^{29\xxi}r_n))
\geq n^{29\xxi } r_n$.
Hence 
$$
\dist (x,\Gamma) \geq \min(\dist(x, \Gamma^{(n^{29\xxi }r_n)}),
\dist (x, \partial \Gamma \oplus B(o,n^{29 \xxi }r_n)) 
> n^\xxi r_n.
$$
Moreover, by Lemma \ref{lemBA}(b), $\dist (x, (\partial A)
\setminus \Gamma)
>  n^\xxi r_n$. Thus $\dist(x, \partial A) > n^\xxi r_n$. 
Moreover, $\dist(x,\partial A) \leq \dist (x,\Gamma) \leq 2 n^\xxi 
r_n $
because $x \in A_n^*$,
and therefore $x \notin A^{[\eps]}$ 
(provided $n$ is large enough)
since $\overline{A^{[\eps]}}$ is compact and contained in $A^o$
(the set $A^{[\eps]}$ was defined in Section \ref{secdefs}.)
Therefore the event
$F_n(A^{( n^\xxi r_n)} \setminus A^{[\eps]},\cU'_{n,A})^c$ occurs.
Thus, for large enough $n$ we have the event inclusion
\begin{align}
F_n(\Gamma^{(n^{29\xxi}r_n)}_{r_n} , \cU'_{n,A}) \cap 
F_n( \DG_n^{+} \cap A, \cU'_{n,A}) \setminus F_n(A_n^*,\cU'_{n,A_n})
\subset
F_n( A^{(n^\xxi r_n)} \setminus A^{[\eps]} ,\cU'_{n,A})^c .
\label{0901d}
\end{align}

By  (\ref{e:rn}), 
\bea
\lim_{n \to \infty} ( \omega_d \E[Y^d] n r_n^{d} -  \log n 
- (d+k-2)
\lglg n )
=  \begin{cases}
\beta & {\rm if} ~ d=2
,k=1
\\
+\infty  & {\rm otherwise.}
\end{cases}
\label{0901c}
\eea
Hence by Lemma  \ref{lemHall} 
applied to the set $A \setminus A^{[\eps]}$,
and Lemma \ref{l:2PPs}
we have that
\begin{align}
\liminf_{n \to \infty} \Pr[
F_n( A^{(n^\xxi r_n)} \setminus A^{[\eps]} ,\cU'_{n,A}) ]
=
\liminf_{n \to \infty} \Pr[
F_n( A^{(n^\xxi r_n)} \setminus A^{[\eps]} ,\cU'_{n,\R^d}) ]
\nonumber
\\
\geq
\liminf_{n \to \infty} \Pr[
F_n( 
A
\setminus A^{[\eps]} ,\cU_{n,\R^d}) ]
\nonumber
\\
= \begin{cases} 
	\exp \Big(- 
	c_d \Big( \frac{(\E[Y^{d-1}])^d}{ (\E[Y^d])^{d-1}} \Big)
	|A \setminus A^{[\eps]} |e^{- \beta} \Big) 
	& {\rm if~} d=2
	, k=1
	\\
	1 & {\rm otherwise.}
\end{cases}
\label{1230a}
\end{align}
Therefore since $\eps$ can be arbitrarily small
and $|A \setminus A^{[\eps]} | \to 0$
as $\eps \downarrow 0$,
the event displayed on the left hand side of
(\ref{0901d}) has probability
tending to zero. Then using
Lemma \ref{dBlem},
we have   \eqref{0830a}, which completes the proof.
\end{proof}

\begin{coro}
\label{coroFB}
It is the case that 
$
\lim_{n \to \infty} 	\Pr[F_n^\Gamma]
=
\exp (-  c_{d,k,Y} |\Gamma| e^{-  \beta/2} ). 
$
\end{coro}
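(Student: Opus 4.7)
The plan is straightforward: chain together Proposition~\ref{lemsurf} with the three symmetric-difference estimates supplied by Lemmas \ref{lemdiff4}, \ref{lemdiff3}, and \ref{lemcap}, using the elementary inequality $|\Pr[E] - \Pr[E']| \leq \Pr[E \triangle E']$ at each step. There is no new probabilistic content to introduce; all the heavy lifting (the polytopal approximation, the induced coverage process, the corner and boundary estimates) has been carried out earlier.

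First I would combine Lemmas \ref{lemdiff4} and \ref{lemdiff3}, which respectively bound $\Pr[E_n^{(1)}]$ and $\Pr[E_n^{(2)}]$. Since
\begin{align*}
F_n(\tA_n^*, \cU'_{n,\tA_n}) \triangle F_n(A_n^*, \cU'_{n,A_n}) = E_n^{(1)} \cup E_n^{(2)},
\end{align*}
the union bound gives $\Pr[F_n(\tA_n^*, \cU'_{n,\tA_n}) \triangle F_n(A_n^*, \cU'_{n,A_n})] \to 0$. Invoking Proposition~\ref{lemsurf}, this yields
\begin{align*}
\lim_{n \to \infty} \Pr[F_n(A_n^*, \cU'_{n,A_n})] = \exp(-c_{d,k,Y} |\Gamma| e^{-\beta/2}).
\end{align*}

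Next I would apply Lemma~\ref{lemcap}, which asserts $\Pr[F_n^\Gamma \triangle F_n(A_n^*, \cU'_{n,A_n})] \to 0$. A second application of the symmetric-difference inequality then transfers the above limit from $\Pr[F_n(A_n^*, \cU'_{n,A_n})]$ to $\Pr[F_n^\Gamma]$, giving the claimed formula. There is no genuine obstacle; the only caveat is to record explicitly that the symmetric-difference bound implies $\limsup_n \Pr[F_n^\Gamma]$ and $\liminf_n \Pr[F_n^\Gamma]$ coincide with the corresponding quantities for $F_n(A_n^*, \cU'_{n,A_n})$, hence with the common limit established via Proposition~\ref{lemsurf}. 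The entire proof should fit in two or three lines.
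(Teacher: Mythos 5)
Your proposal is correct and matches the paper's own proof exactly: the paper also chains Lemmas~\ref{lemdiff4}, \ref{lemdiff3}, and \ref{lemcap} via the symmetric-difference bound $|\Pr[E]-\Pr[E']|\leq \Pr[E\triangle E']$ and then invokes Proposition~\ref{lemsurf}. No difference in approach or content.
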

\begin{proof}
By Lemmas \ref{lemdiff4} and \ref{lemdiff3},
$\Pr[ 
F_n(\tA^{*}_n, \cU'_{n,\tA_n})
\triangle
F_n(A_n^*, \cU'_{n,A_n}) 
] \to 0$.
Then by Lemma \ref{lemcap}, 
$\Pr[ F_n^\Gamma \triangle F_n(W^{*}_n, \cU'_{n,\tA_n}) ] \to 0$,
and now the result follows by Proposition \ref{lemsurf}.
\end{proof}

\begin{proof}[Proof of Theorem \ref{th:generaldhi}]
Let $x_1,\ldots,x_J $ and $r(x_1),\ldots,r(x_J)$
be as described at (\ref{bycompactness}).
Set $\Gamma_1 := B(x_1,r(x_1) ) \cap \partial A$, and for
$j =2,\ldots, J$, let 
$$ 
\Gamma_j := \overline{
B(x_j,r(x_j) ) \cap \partial A \setminus \cup_{i=1}^{j-1}
B(x_i,r(x_i))}, 
$$
and $\partial \Gamma_i := \Gamma_i \cap \overline{\partial A \setminus \Gamma_i}$.
Then  $\Gamma_1,\ldots,\Gamma_J$
comprise a finite collection of closed
sets in $\partial A$
with disjoint interiors, each of which
satisfies $\kappa(\partial \Gamma_i,r) =
O(r^{2-d})$ as $r \downarrow 0$, and is
contained in a single chart $B(x_j,r(x_j))$, and
with union $\partial A$.
For $1 \leq i \leq J$,
define $F_n^{\Gamma_i}$ analogously to $F_n^\Gamma$, 
that is,
$F_n^{\Gamma_i} := F_n(\Gamma_{i,n^{\xxi} r_n}^{(n^{29\xxi}r_n)}
, \cU'_{n,A})$
with
$$
\Gamma_{i,n^\xxi r_n}^{(n^{29 \xxi}r_n)} := 
\left( \left[ \Gamma_i \setminus ((\partial \Gamma_i) 
\oplus B(o,n^{29 \xxi }r_n ))
\right] \oplus B(o,  n^\xxi r_n) \right) \cap A.
$$
First we claim that
the following event inclusion holds:
\bean
\cap_{i=1}^J F_n^{\Gamma_i}
\cap
F_n(A^{(n^\xxi r_n)}, \cU'_{n,A}) 
\setminus
F_n(A,\cU'_{n,A}) 
\subset
\left(\cap_{i=1}^J F_n([(\partial \Gamma_i) \oplus B(o,n^{49\xxi}r_n)]
\cap A, \cU'_{n,A}) \right)^c.
\eean
Indeed, suppose 
$
\cap_{i=1}^J F_n^{\Gamma_i} 
\cap
F_n(A^{(n^\xxi r_n)}, \cU'_{n,A})
\setminus
F_n(A,\cU'_{n,A})
$ occurs, and choose 
$x \in A \setminus Z_n(\cU'_{n,A})$.
Then $\dist(x,\partial A) \leq  n^\xxi r_n$ since
we assume $F_n(A^{(n^\xxi r_n)},\cU'_{n,A})$ occurs.
Then for some $i \in \{1,\ldots,J\}$
and some $y \in \Gamma_i$ we have $\|x-y \| \leq  n^\xxi r_n$.
Since we assume $F_n^{\Gamma_i}$ occurs,
we have $x \notin \Gamma_{i,n^\xxi r_n}^{(n^{29\xxi}r_n)}$, and hence  
$\dist(y, \partial \Gamma_i) \leq n^{29\xxi}r_n$,
so 
$\dist(x, \partial \Gamma_i) < n^{49 \xxi}r_n$.
Therefore $F_n([(\partial \Gamma_i) \oplus B(o, n^{49\xxi}r_n )]
\cap A, \cU'_{n,A})$ fails to occur, justifying the claim.

By  the preceding claim and and the union bound,
\begin{align}
\Pr[F_n(A,\cU'_{n,A})
] & \leq \Pr[ \cap_{i=1}^J F_n^{\Gamma_i}
\cap F_n(A^{(n^\xxi r_n)},\cU'_{n,A}) ] 
\nonumber	\\
& \leq \Pr[F_n(A,\cU'_{n,A}) ]  
+ \sum_{i=1}^J
\Pr[ F_n([(\partial \Gamma_i) \oplus B(o,n^{49\xxi}r_n)]
\cap A, \cU'_{n,A})^c].
\nonumber
\end{align}
By  Lemma \ref{dBlem},
$\Pr[ F_n([(\partial \Gamma_i) \oplus B(o,n^{49\xxi}r_n)]
\cap A, \cU'_{n,A})]
\to 1 $ for each $i$.
Therefore 
\bea
\lim_{n \to \infty} \Pr[F_{n} (A,\cU'_{n,A}) ] = 
\lim_{n \to \infty} \Pr[
\cap_{i=1}^J F_{n}^{\Gamma_i}
\cap F_n(A^{(n^\xxi r_n)}, \cU'_{n,A})
],
\label{0901e}
\eea
provided the last limit exists.
By Corollary \ref{coroFB},
we have for each $i$ that
\bea
\lim_{t \to \infty}
(\Pr[ F^{\Gamma_i}_n]) 
= \exp(- c_{d,k,Y}|\Gamma_i|  e^{-\beta/2} ).
\label{0901f}
\eea
Also, we claim that for large enough $n$ 
the events $F_n^{\Gamma_1}$, \ldots, $F_n^{\Gamma_J}$ are
mutually independent.
Indeed, given distinct $i,j \in \{1,\ldots,J\}$,
if $x \in \Gamma_{i,n^\xxi r_n}^{(n^{29\xxi}r_n)}$ and
$y \in \Gamma_{j,n^\xxi r_n}^{(n^{29\xxi}r_n)}$, then
we can take
$y' \in \Gamma_j \setminus ( \partial \Gamma_j  \oplus B(o,n^{29\xxi}r_n))$
with $\|y'-y \| \leq n^\xxi r_n$. 
If $\| x -y \| \leq 3 n^\xxi r_n$ then by the triangle
inequality $\|x - y'\| \leq 4 n^\xxi r_n$, but since $y' \notin \Gamma_i$,
this would contradict Lemma \ref{lemBA}(c). Therefore
$\| x - y\| > 3 n^\xxi r_n$, and hence the $n^\xxi r_n$-neighbourhoods
of $
\Gamma_{i,n^\xxi r_n}^{(n^{29\xxi}r_n)}$ and of
$ \Gamma_{j,n^\xxi r_n}^{(n^{29\xxi}r_n)}$ are disjoint. This gives
us the independence claimed.

Now observe that $F_n(A^{(n^\xxi r_n)},\cU'_{n,A}) \subset
F_n(A^{(4n^\xxi r_n)},\cU'_{n,A})$.
We claim that 
\bea
\Pr[ F_n(A^{(4n^\xxi r_n)},\cU'_{n,A}) \setminus F_n(A^{(n^\xxi r_n)},
\cU'_{n,A})] \to 0 ~~~ {\rm as} ~ n \to \infty.
\label{0920a}
\eea
Indeed, given $\eps >0$, for large $n $ 
the probability on the left side of (\ref{0920a})
is bounded by
$\Pr[ F_n(A^{(n^\xxi r_n)}\setminus A^{[\eps]},\cU'_{n,A})^c]  $, and by
(\ref{1230a}) the limsup of the latter
probability 
can be made arbitrarily
small by the choice of $\eps$.
Hence by Lemma \ref{lemHall} 
and (\ref{0901c}),
Lemma \ref{l:2PPs} and the fact that $c_2=1$,
\begin{align}
\lim_{n \to \infty} \Pr[ F_n(A^{(4n^\xxi r_n)},\cU'_{n,A}) ]
& =
\lim_{n \to \infty} \Pr[ F_n(A^{(n^\xxi r_n)},\cU'_{n,A})] 
\nonumber \\
& =
\lim_{n \to \infty} \Pr[ F_n(A^{(n^\xxi r_n)},\cU'_{n,\R^d})] 
\nonumber \\
& =  \begin{cases}
	\exp \Big( - \Big( \frac{(\E[Y])^2}{
		\E[Y^2]} \Big) |A| e^{- \beta}  \Big) & {\rm if} ~ d=2,
	k=1
	\\
	1 & {\rm otherwise} .
\end{cases}
\label{0920c}
\end{align}
Moreover, by (\ref{0901e}) and (\ref{0920a}),
\bea
\lim_{n \to \infty} 
\Pr[ F_n( A , \cU'_{n,A})
] =
\lim_{n \to \infty} \Pr[
\cap_{i=1}^J F_n^{\Gamma_i}
\cap F_n(A^{(4n^\xxi r_n)}, \cU'_{n,A})
],
\label{0920b}
\eea
provided the last limit exists.
However, the events in the right hand side of (\ref{0920b})
are mutually independent, so using (\ref{0901f}), 
\eqref{e:cdY}
and
(\ref{0920c}),
we obtain that
\begin{align*}
\lim_{n \to \infty} 
\Pr[ F_{n}(A,\cU'_{n,A})] 
=
\exp\Big( - c_{d,k} 
\Big(\frac{(\E[Y^{d-1}])^{d-1}}{(\E[Y^{d}])^{d-2+1/d}} \Big)
|\partial A| e^{-\beta/2}
\\
- \Big( \frac{\E[Y]^2}{\E[Y^2]} \Big) |A| e^{-\beta} {\bf 1}_{\{(d,k)=(2,1)\}}\Big).
\end{align*}
By Lemma \ref{l:2PPs}, if we replace $\cU'_{n,A}$ with
$\cU_{n,A}$ on the left, we get the same limit, i.e.
\eqref{0128b} if $d \geq 3$ and \eqref{0128a} if $d=2$.
\end{proof}

\begin{appendices}

\section{Confirming consistency with Chiu's result}\label{a:consist}

In this appendix we verify that our Proposition  \ref{Hallthm}
is consistent with \cite[Theorem 4]{Chiu95}.
The latter result is concerned with a quantity denoted
$T_L$ in \cite{Chiu95} which is the same as our $\tilde{\tau}_L$,
assuming we take $v= \gamma =1$ in \cite{Chiu95}. Chiu 
takes $A = [0,1]^d$ and defines
$c = c(L) = d(d+1) \log L  - \log \omega_d$. Then 
in our notation,  the result in \cite{Chiu95} says that 
for any  $u \in \R$,
as $L \to \infty$ we have
\begin{align}
	\Pr \left[ 
	c^{d/(d+1)} \omega_d^{1/(d+1)} \tilde{\tau}_L - c - 
	\log
	\left(c^{1/(d+1)}\left( \frac{c+ \log c}{d+1}  \right)^{d-1} 
	\right)
	\leq u
	\right] \to F(u),
	\label{e:Chiu14}
\end{align}
with $F(u) = \exp(-c_d (d+1)^{d-1}d^{-d} e^{-u})$
(recall that our $c_d$ is the same as Chiu's $\psi_d$
so $c$ and $c_d$ are two different things here).

To verify that we can recover \eqref{e:Chiu14} from
Proposition \ref{Hallthm}, note first that $c \to \infty$
as $L \to \infty$ and
$ \log c = \log \log L + \log (d(d+1)) + o(1)$.
Also
\begin{align*}
	\log
	\left(c^{1/(d+1)}\left( \frac{c+ \log c}{d+1}  \right)^{d-1} 
	\right)
	= \log \left( c^{d^2/(d+1)} (d+1)^{1-d} \right)
	+ o(1).
\end{align*}
Hence for some $z_L$ which tends to zero as $L \to \infty$,
the 
left hand side of \eqref{e:Chiu14} equals
\begin{align*}
	\Pr  \left[
	c^d \omega_d \tilde{\tau}_L^{d+1} 
	\leq
	\left(
	c+ \log (c^{d^2/(d+1)} (d+1)^{1-d}
	)
	+ u + z_L \right)^{d+1} \right].
\end{align*}
By binomial expansion,
for some $y_L$ which tends to zero as $L \to \infty$,
this probability equals
\begin{align*}
	&	 \Pr  \left[
	c^d \omega_d \tilde{\tau}_L^{d+1} 
	\leq
	c^{d+1}+ (d+1) c^d
	\left(
	\log (c^{d^2/(d+1)} (d+1)^{1-d}
	)
	+ u + z_L \right) 
	+ y_L c^d \right]
	\\
	= & \Pr  \left[
	\omega_d \tilde{\tau}_L^{d+1} 
	\leq
	c+ (d+1) 
	\left(
	\log (c^{d^2/(d+1)} (d+1)^{1-d})
	+ u + z_L \right) 
	+ y_L \right]
	\\
	=  & \Pr  \left[
	\omega_d \tilde{\tau}_L^{d+1} 
	\leq d(d+1) \log L - \log \omega_d
	+ d^2 \log c + \log ((d+1)^{1-d^2} )
	+ (d+1)u + z'_L \right],
\end{align*}
where we set $z'_L := (d+1)z_L + y_L = o(1)$ as $L \to \infty$.
For some $z''_L$ tending to zero, this probability equals
\begin{align*}
	\Pr  \left[
	\omega_d \tilde{\tau}_L^{d+1} 
	\leq d(d+1) \log L +
	d^2 \log \log L 
	+ \log \left( 
	\frac{d^{d^2}(d+1)^{d^2}}{
		(d+1)^{d^2-1} \omega_d}
	\right)
	+ (d+1) u + z''_L \right],
	\\
	= \Pr  \left[
	\omega_d \tilde{\tau}_L^{d+1} 
	- d(d+1) \log L -
	d^2 \log \log L 
	\leq 
	\log \left( 
	d^{d^2}(d+1)/
	\omega_d
	\right)
	+ (d+1) u + z''_L \right].
\end{align*}
By \eqref{e:ttaulim} from Proposition \ref{Hallthm},
and the continuity of the limiting cumulative distribution
function in that result,
this probability tends to
\begin{align*}
	& \exp \left( - c_d  \left(\frac{(d+1)^{d^2}}{d^d
		\omega_d} \right)^{1/(d+1)} e^{-u}
	(d^{d^2}(d+1))^{-1/(d+1)} \omega_d^{1/(d+1)} \right)
	\\
	& = \exp \left(- c_d d^{-d} (d+1)^{d-1} e^{-u}
	\right) = F(u).
\end{align*}
In other words, we have derived \eqref{e:Chiu14} from
Proposition \ref{Hallthm}.
\end{appendices}

{\bf Acknowledgement.}
We thank Edward Crane for a useful conversation on the link between
the SPBM and the Johnson-Mehl model.
We also thank two anonymous referees for some
helpful comments. 

{\bf Data availability statement.}
The code used to generate Figure~\ref{f:tessellations} and Figure~\ref{f:Tri} and the videos discussed in Section~\ref{secdefs}, as well as all the code and data used to create Figure~\ref{f:sim}, are all available at \url{https://github.com/frankiehiggs/johnson-mehl}.


\bibliographystyle{plainurl}
\bibliography{jm-references}

\begin{thebibliography}{10}

\bibitem{Alm98}
S.E. Alm.
\newblock Approximation and simulation of the distributions of scan statistics
  for {Poisson} processes in higher dimensions.
\newblock {\em Extremes}, 1(1):111--126, January 1998.
\newblock \href {http://dx.doi.org/10.1023/a:1009965918058}
  {\path{doi:10.1023/a:1009965918058}}.

\bibitem{CC14}
P.~Calka and N.~Chenavier.
\newblock Extreme values for characteristic radii of a {Poisson-Voronoi}
  tessellation.
\newblock {\em Extremes}, 17(3):359--385, May 2014.
\newblock \href {http://dx.doi.org/10.1007/s10687-014-0184-y}
  {\path{doi:10.1007/s10687-014-0184-y}}.

\bibitem{Chiu95}
S.~N. Chiu.
\newblock Limit theorems for the time of completion of {Johnson-Mehl}
  tessellations.
\newblock {\em Advances in Applied Probability}, 27(4):889--910, December 1995.
\newblock \href {http://dx.doi.org/10.2307/1427927}
  {\path{doi:10.2307/1427927}}.

\bibitem{CSKM}
S.N. Chiu, D.~Stoyan, W.S. Kendall, and J.~Mecke.
\newblock {\em Stochastic Geometry and its Applications}.
\newblock Wiley, Chichester, August 2013.
\newblock \href {http://dx.doi.org/10.1002/9781118658222}
  {\path{doi:10.1002/9781118658222}}.

\bibitem{Gusakova}
A.~Gusakova, Z.~Kabluchko, and C.~Th\"{a}le.
\newblock The $\beta$-{Delaunay} tessellation: Description of the model and
  geometry of typical cells.
\newblock {\em Advances in Applied Probability}, 54(4):1252--1290, August 2022.
\newblock \href {http://dx.doi.org/10.1017/apr.2022.6}
  {\path{doi:10.1017/apr.2022.6}}.

\bibitem{HallZW}
P.~Hall.
\newblock Distribution of size, structure and number of vacant regions in a
  high-intensity mosaic.
\newblock {\em Zeitschrift für Wahrscheinlichkeitstheorie und verwandte
  Gebiete}, 70(2):237--261, August 1985.
\newblock \href {http://dx.doi.org/10.1007/bf02451430}
  {\path{doi:10.1007/bf02451430}}.

\bibitem{CovXY}
F.~Higgs, M.D. Penrose, and X.~Yang.
\newblock Covering one point process with another.
\newblock {\em Methodology and Computing in Applied Probability}, 27(2), April
  2025.
\newblock \href {http://dx.doi.org/10.1007/s11009-025-10165-7}
  {\path{doi:10.1007/s11009-025-10165-7}}.

\bibitem{Janson}
S.~Janson.
\newblock Random coverings in several dimensions.
\newblock {\em Acta Mathematica}, 156(0):83--118, 1986.
\newblock \href {http://dx.doi.org/10.1007/bf02399201}
  {\path{doi:10.1007/bf02399201}}.

\bibitem{LP}
G.~Last and M.~Penrose.
\newblock {\em Lectures on the {Poisson} Process}.
\newblock Cambridge University Press, Cambridge, October 2017.
\newblock \href {http://dx.doi.org/10.1017/9781316104477}
  {\path{doi:10.1017/9781316104477}}.

\bibitem{Moller}
J.~M{\o}ller.
\newblock Random {Johnson-Mehl} tessellations.
\newblock {\em Advances in Applied Probability}, 24(4):814--844, December 1992.
\newblock \href {http://dx.doi.org/10.2307/1427714}
  {\path{doi:10.2307/1427714}}.

\bibitem{OBSC}
A.~Okabe, B.~Boots, K.~Sugihara, and S.N. Chiu.
\newblock {\em Spatial Tessellations}.
\newblock John Wiley \& Sons, Inc., Chichester, July 2000.
\newblock \href {http://dx.doi.org/10.1002/9780470317013}
  {\path{doi:10.1002/9780470317013}}.

\bibitem{OT23}
M.~Otto and C.~Th\"{a}le.
\newblock Large nearest neighbour balls in hyperbolic stochastic geometry.
\newblock {\em Extremes}, 26(3):413--431, April 2023.
\newblock \href {http://dx.doi.org/10.1007/s10687-023-00470-0}
  {\path{doi:10.1007/s10687-023-00470-0}}.

\bibitem{P03}
M.~Penrose.
\newblock {\em Random Geometric Graphs}.
\newblock Oxford University Press, Oxford, 2003.
\newblock URL: \url{https://doi.org/10.1093/acprof:oso/9780198506263.001.0001},
  \href {http://dx.doi.org/10.1093/acprof:oso/9780198506263.001.0001}
  {\path{doi:10.1093/acprof:oso/9780198506263.001.0001}}.

\bibitem{P02}
M.D. Penrose.
\newblock Focusing of the scan statistic and geometric clique number.
\newblock {\em Advances in Applied Probability}, 34(4):739--753, December 2002.
\newblock \href {http://dx.doi.org/10.1239/aap/1037990951}
  {\path{doi:10.1239/aap/1037990951}}.

\bibitem{P23}
M.D. Penrose.
\newblock Random {Euclidean} coverage from within.
\newblock {\em Probability Theory and Related Fields}, 185(3–4):747--814,
  January 2023.
\newblock \href {http://dx.doi.org/10.1007/s00440-022-01182-5}
  {\path{doi:10.1007/s00440-022-01182-5}}.

\bibitem{PY24}
M.D. Penrose and X.~Yang.
\newblock Fluctuations of the connectivity threshold and largest
  nearest-neighbour link.
\newblock {\em The Annals of Applied Probability, to appear.}
\newblock URL: \url{https://arxiv.org/abs/2406.00647}.

\bibitem{Pryce}
J.D. Pryce.
\newblock {\em Basic Methods of Linear Functional Analysis}.
\newblock Hutchinson, London, 1973.

\bibitem{Rossi}
F.~Rossi, M.~Fiorentino, and P.~Versace.
\newblock Two‐component extreme value distribution for flood frequency
  analysis.
\newblock {\em Water Resources Research}, 20(7):847--856, July 1984.
\newblock \href {http://dx.doi.org/10.1029/wr020i007p00847}
  {\path{doi:10.1029/wr020i007p00847}}.

\bibitem{Schreiber}
T.~Schreiber and J.~E. Yukich.
\newblock Variance asymptotics and central limit theorems for generalized
  growth processes with applications to convex hulls and maximal points.
\newblock {\em The Annals of Probability}, 36(1), January 2008.
\newblock \href {http://dx.doi.org/10.1214/009117907000000259}
  {\path{doi:10.1214/009117907000000259}}.

\end{thebibliography}

\end{document}